\DeclareMathAlphabet{\mathpzc}{OT1}{pzc}{m}{it}
\newcommand{\subparagraph}{}
\titlespacing{\section}{0pt}{1ex}{1ex}
\titlespacing{\subsection}{0pt}{1ex}{1ex}
\newcommand\blfootnote[1]{%
	\begingroup
	\renewcommand\thefootnote{}\footnote{#1}%
	\addtocounter{footnote}{-1}%
	\endgroup
}
\newtheorem{myth}{Theorem}
\newtheorem{lemma}{Lemma}
\newtheorem{remark}{Remark}
\newtheorem{corollary}{Corollary}[myth]
\newtheorem{clm}{Claim}
\newcommand{\eg}{\textit{e}.\textit{g}.}
\newcommand{\ie}{\textit{i}.\textit{e}.}
\newcommand{\figwidth}{80mm}
\begin{document}
\twocolumn[
\begin{@twocolumnfalse}
\title{Quickest Detection of Deception Attacks on Cyber-Physical Systems with a Parsimonious
Watermarking Policy}


\author{Arunava Naha$^{a,*}$ , André Teixeira$^{b}$, Anders Ahlén$^{a}$ and Subhrakanti Dey$^{a}$ \\ 
{\small
$^{a}$Electrical Engineering, Uppsala University, Uppsala, Sweden             \\                              
$^{b}$Department of Information Technology, Uppsala University, Uppsala, Sweden
}
}

\maketitle

	\begin{abstract}
Adding a physical watermarking signal to the control input of a networked control system increases the detection probability of data deception attacks at the expense of increased control cost. This paper proposes a parsimonious policy to limit the average number of watermarking events when the attack is not present, which in turn reduces the control cost. We model the system as a stochastic optimal control problem and apply dynamic programming to minimize the average detection delay (ADD) for fixed upper bounds on false alarm rate (FAR) and an average number of watermarking events (ANW) before the attack. Under practical circumstances, the optimal solution results in a two threshold policy on the posterior probability of attack, derived from the Shiryaev statistics for sequential change detection and assuming the change point is a random variable. We derive asymptotically approximate analytical expressions of ADD and FAR, applying the non-linear renewal theory for non-independent and identically distributed data. The derived expressions reveal that ADD reduces with the increase in the Kullback-Leibler divergence (KLD) between the post- and pre-attack distributions of the test statistics. Therefore, we further design the optimal watermarking that maximizes the KLD for a fixed increase in the control cost. The relationship between the ANW and the increase in control cost is also derived. Simulation studies are performed to illustrate and validate the theoretical results.
	\end{abstract}
{\textit{Key words:}}	attack detection, cyber-physical system, deception attack, Kullback–Leibler divergence, linear quadratic Gaussian control, networked control system, physical watermarking, sequential change detection in Bayesian setting, Shiryaev statistics. \newline \newline
\end{@twocolumnfalse}
]

\blfootnote{This work is supported by The Swedish Research Council under grants 2017-04053 and 2018-04396, and by the Swedish Foundation for Strategic Research. $^{*}$Corresponding author A.~Naha. Tel. +46-76-4552158.}

\section{Introduction} \label{sec:intro}
Nowadays, cyber-physical systems (CPS) with embedded software, processors, sensor network, and other physical components are getting deployed for advanced healthcare, smart buildings, smart manufacturing units, intelligent transport systems, defence purposes, smart grids, etc. \cite{Satchidanandan2017}. CPS integrate cyber and physical components by exchanging data over the wireless network and provide autonomy, reliability, accuracy, and real-time control without human involvement \cite{Satchidanandan2017, Alguliyev2018}. Along with their numerous advantages, there is also a growing concern regarding the safety and security of CPS. Due to the use of commodity software and off-the-shelf networking components, unattended operations, and a few other reasons CPS are vulnerable to adversarial attacks on the cyber or/and physical layer \cite{Mo2015}. Cryptography, firewalls, user authentications, digital watermarking, etc. are already in place to protect CPS from cyber attacks. However, such protection mechanisms may not be adequate against physical attacks. For example, during the Stuxnet attack \cite{Langner2011}, attackers issued harmful exogenous control inputs to increase the pressure of the centrifuges beyond the safety limit at a uranium enrichment plant in Iran. To remain stealthy during the attack, attackers also replaced the true observation from the system with previously recorded data. There are a few other examples, such as the attack on a sewage system in Australia \cite{Abrams2008}, the attack on the Davis-Besse nuclear power plant in Ohio, USA \cite{Alvaro1992}, etc., where cyber protection schemes failed to prevent or detect the attacks. Attacks on CPS may cause considerable monetary loss and pose threats to human safety \cite{Satchidanandan2017}.

Attack strategies for the physical layer of CPS can be broadly classified into two groups, data deception attacks and denial of service (DoS) attacks. In data deception attacks, the adversary feeds the system with false data \cite{Satchidanandan2017, Mo2015}. Replay attacks are one kind of data deception attack, where the attacker replaces the true observations with previously recorded data to remain stealthy \cite{Mo2015}. In DoS attacks, the attacker's objective is to disrupt the availability of data. The attacker may achieve that by overpowering the wireless network \cite{Salimi2019}. In an attack scenario, the attacker's objective is to remain stealthy as long as possible and cause maximum damage to the system. The inherent noise and uncertainties in CPS assist the attacker in achieving such an objective. The role of a control system engineer is to detect the attack as soon as possible to minimize the damage. In this paper, we have studied data deception attacks on networked control systems (NCS), where the attacker replaces the true observation with fake data. 

\subsection{Related Work}
\label{subsec:related_work}
Researchers are working on different challenges to secure CPS against attacks on the physical layer, such as the study of different attack strategies \cite{Park2019, Chen2018}, attack resilient state estimation \cite{Fawzi2014, Du2019, Nicola2018}, attack detection strategies \cite{Satchidanandan2017,Mo2015,Pasqualetti2013,Mousavinejad2018,Ge2019,Ko2019a,Mo2014,Fang2020,Satchidanandan2020}, etc. Detection strategies for the attacks on the physical layer of CPS can be broadly divided into two groups, passive and active schemes. Under the passive attack detection scheme, the innovation signal from the state estimator or the observation signal is subjected to various statistical tests  \cite{Pasqualetti2013, Mousavinejad2018, Ge2019}. However, as studied in the literature, passive detection schemes generally have an unsatisfactory probability of detection in the presence of noise and uncertainties \cite{Mo2009}. On the other hand, under the active attack detection scheme, physical watermarking signals are added to the control inputs, and various statistical tests are used to  check the authenticity of the received observations. The physical watermarking scheme was first introduced in \cite{Mo2009} to detect replay attacks by adding an iid watermarking signal to the control input and performing a $\chi^2$ test using the innovation signal from the state estimator. The method in \cite{Mo2009} is improved by designing an optimal watermarking signal in \cite{Mo2014}. Instead of an iid watermarking scheme, the watermarking signal generated from a hidden Markov Model (HMM) is studied in \cite{Mo2015}. A sequential attack detection scheme using the CUSUM statistics evaluated from the joint distribution of the added watermarking and the innovation signal is studied in \cite{naha_replay_attack}. Besides the innovation signal, the observation signal is also used to generate residue signals for the attack detections \cite{Satchidanandan2017}. In \cite{Mo2015, Satchidanandan2017, naha_replay_attack}, watermarking signals are added to the control inputs for all the time instants till the point of attack detection. The addition of physical watermarking to the control input increases the probability of attack detection at the expense of increased control cost \cite{Mo2015}. The relation between the increase in the linear quadratic Gaussian control cost, $\Delta LQG$, and the watermarking signal variance is studied in \cite{Mo2015}. Since the attack is a less frequent event, adding the watermarking signal during the normal operation for a long time can  increase the total control cost significantly \cite{Fang2020} and unnecessarily. In the current paper, we have studied an evidence-based watermarking policy to reduce the increase of control cost  before an attack, and, at the same time, achieve satisfactory detection performance.

Researchers are exploring diverse approaches to reduce the increase in the control cost due to the added watermarking and maintain satisfactory detection performance. In one approach, the authors have added the watermarking periodically to the control inputs and kept a balance between the improvement in the control cost and the increase in the detection delay \cite{Fang2020}. Another approach is to add watermarking directly to the observations \cite{Trapiello2019, Ferrari2017, Ye2019}. In this approach, the authenticity of the observations is first examined at the receiving end, and then the watermarking signal is filtered out before using the observations in the controller. Since the watermarking signal is filtered out, the control cost does not increase. Different kinds of watermarking signals are used in this context, such as sinusoidal \cite{Ferrari2017}, time-varying sinusoidal \cite{Sanchez2019}, random noise \cite{Ye2019}, multiplicative to the observations \cite{Trapiello2019}, etc. However, these methods may fail in the scenario, where the attacker hijacks the sensor node and feeds the fake data before the addition of the watermarking. In general, the physical watermarking-based methods targeting to reduce the increased control cost or more traditional always present watermarking-based methods use batch processing of data, \ie, innovation signal or observation signal. Therefore, those methods do not address the problem of the quickest attack detection. However, we know that early detection of attacks is of paramount importance for CPS to reduce the amount of damage. Therefore, in this paper, we studied the problem of the quickest detection of attacks which uses watermarking parsimoniously to reduce the loss in control performance prior to an attack. The literature on the quickest detection of a change point by sequential analysis of data dates back several decades. A brief  review on the quickest change detection techniques is provided in the following paragraph.        

The quickest change detection methods can be classified into two broad groups depending upon the assumption of the model of the change point \cite{Tartakovsky2014}. In one approach, which is also called the minimax approach, the change point is modelled as deterministic but unknown. The cumulative sum (CUSUM) technique is one of such minimax approaches, which was first introduced by Lorden \cite{lorden1971procedures}. In the other approach, the Bayesian approach, the change point is modelled as a random variable (RV) with some prior distribution. The Bayesian change point detection technique was first introduced by Shiryaev \cite{Shiryaev1963}. The original Shiryaev rule was proposed for the data with different iid distributions, before and after the change point. Finding an optional detection rule for the general non-iid data is difficult \cite{Fuh2019}. In \cite{Series2010}, an optimal detection rule is developed for homogeneous finite-state Markov chains. A slightly different approach is followed in \cite{Tartakovsky2005,Tartakovsky2017}, where the authors proved that the Shiryaev rule, with minor modifications, is an asymptotically optimal quickest change detection rule under the conditions given in (3), (4), (21) and (23) of \cite{Tartakovsky2017}. That means the Shiryaev procedure minimizes the average detection delay (ADD) for a fixed upper threshold on the false alarm rate (FAR) for the non-iid data provided that the threshold $\rightarrow \infty$ and a few other conditions are satisfied. The condition (3) of \cite{Tartakovsky2017} for the optimality is that the prior distribution of the change point must satisfy (\ref{eqn:c1}).
\begin{equation}
	\lim_{k \rightarrow \infty}\frac{\log\text{P}\left\{\Gamma\ge k+1\right\}}{k}=-c, \text{ } c \ge 0, \label{eqn:c1} 
\end{equation}
where $\Gamma$ is the change point. That means the exponential rate of convergence of the prior distribution must be $c \ge 0$, where $c >0$ indicates the prior distribution has an exponential right tail, and $c=0$ indicates the prior distribution is heavy-tailed \cite{Fuh2019}. An attacker will always try to remain stealthy for a long time because the longer time the attacker remains undetected, the more damage can be caused \cite{Fang2020}. On the other hand, the defender should design a detection mechanism that will detect the attack as soon as possible with an acceptable FAR to reduce the amount of damage. Therefore, we have used the Bayesian approach in this paper, which minimizes the ADD, whereas the other method, the minimax approach, only minimizes the worst-case ADD (computed over all possible attack start points) \cite{Tartakovsky2014}.  Additionally, our work in this paper is inspired by two other prior works \cite{Premkumar2008, Banerjee2012}. The quickest intrusion detection problem is studied in \cite{Premkumar2008}, where only a minimal set of sensors from a sensor network is kept active at a particular time instant. The problem of quickest change detection is also studied in \cite{Banerjee2012} with upper bounds on the average number of sensor data used before the change point and the FAR. In both the problem formulations, the underlying data was assumed to be iid, which is not the case for the system under study in this paper. However, similar to several other works on change-point detections \cite{Premkumar2008, Banerjee2012}, we have also assumed the distribution of the change point, \ie, the attack start point, to be a geometric distribution with parameter $\rho$, which satisfies the condition given in (\ref{eqn:c1}). 

\subsection{Contributions}
\label{subsec:contributions}
In our previous work \cite{naha_replay_attack}, we studied in detail that the worst-case ADD decreases with the increase in $\Delta LQG$, which denotes the increases in the LQG control cost due to the addition of watermarking for the always-present watermarking scheme. Additionally, $\Delta LQG$ is proportional to the watermarking signal power. In other words, an attack can be detected early with higher watermarking signal power, \ie, at the expense of increased control cost. Therefore, in this paper, we propose a method to reduce the average number of watermarking (ANW) events used before the attack start point, which reduces the average watermarking signal power and subsequently $\Delta LQG$. We formulate the task at hand as a stochastic optimal control problem to minimize the ADD for fixed upper bounds on FAR and ANW and apply dynamic programming to solve it. Similar to any other detection technique, there is always a trade-off between ADD and FAR \cite{Tartakovsky2005}. We have studied the structure of the dynamic programming solution, \ie, the solution of the Bellman equation, and found that the optimal policy is a two threshold policy with thresholds $Th^s$ and $Th^d$, $Th^d \ge Th^s$, on the posterior probability of attack $p_k$ under practical circumstances. In other words, if $p_k \ge Th^s$, then we add watermarking to the $(k+1)$-th control input. On the other hand, if $p_k \ge Th^d$, we decide that the attack is present in the system and terminate the process. Our study shows that $Th^s$ primarily controls the ANW, which in turn controls the $\Delta LQG$ value, and $Th^d$ primarily controls the ADD and FAR. Asymptotically approximate analytical expressions of ADD and FAR are derived by applying non-linear renewal theory for non-iid data. The derived expression of ADD indicates that the ADD reduces with the increase of the Kullback-Leibler divergence (KLD) between the post- and pre-attack distributions of the test data. Additionally, the derived analytical expression of KLD for our problem formulation provides the relationship between the KLD and the watermarking signal variance. Therefore, we use this relationship to derive the optimal watermarking signal variance, which maximizes the KLD for a given upper bound on $\Delta LQG$. We have also obtained an expression of $\Delta LQG$ for a given ANW. We have reported a preliminary simulation study on this problem previously for a single-input single-output (SISO) system in \cite{ECC21}. In the current paper, we have performed a more in-depth theoretical analysis of the problem for general multi-input and multi-output (MIMO) system models. Our main contributions are as follows.

\begin{enumerate}
	\item To the best of our knowledge, this is the first time the Bayesian approach is applied for the quickest detection of data deception attacks on NCS with a parsimonious watermarking policy to reduce the control cost. 
	\item We have derived asymptotically approximate analytical expressions of ADD, FAR and $\Delta LQG$ that facilitate the optimal design of the watermarking process. 
	\item We have optimised the watermarking signal variance to maximise KLD, which improves ADD for a fixed upper bound on the $\Delta LQG$.
\end{enumerate}

\label{subsec:organization}
The paper is organized as follows. Section~\ref{sec:sys_model} discusses the system model and the attack strategy considered in this paper. The defence mechanism is explained in Section~\ref{sec:def_strategy}. Section~\ref{sec:ADD_FAR_LQG} provides the analytical expressions of ADD, FAR and the relationship between the ANW and $\Delta LQG$. It also explains the optimization framework for the watermarking signal variance. Section~\ref{sec:num_results} presents and discusses the simulation results. Section~\ref{sec:conclusion} concludes the paper.
\subsection{Notations}
\label{subsec:notations}
We have used capital bold letters, \eg, $\bf{A}$, $\bf{B}$, etc. to specify matrices and small bold letters, \eg, $\bf{x}$, $\bf{y}$, etc. to specify vectors, unless specified otherwise. Some special notations are given in Table~\ref{tab:notations}. 
\begin{table}[h!]
	\begin{center}
		\caption{Notations}
		\label{tab:notations}
		\begin{tabular}{l|l} 
			\hline \hline
			Symbol & Description \\
			\hline
			${\rm I\!R}^{n}$ & The set of $n\times 1$ real vectors \\
			${\rm I\!R}^{m\times n}$ & The set of $m\times n$ real matrices \\
			$\hat {\left\{\cdot \right\}}$  & Estimated quantity   \\
			$\text{E}\left[\cdot\right]$ & Expectation operator \\
			${\hat{\bf{x}}}_{k|k}$ & Estimated state at $k$-th instant using measurements\\
			&  up to $k$-th instant \\
			$\left[\cdot \right]^T$ & Transpose of a matrix or vector \\
			$\mathcal{N}(\mu,{\bf \Sigma})$ & Gaussian distribution with mean $\mu$ and variance $\bf \Sigma$  \\
			$\left \{ \cdot \right \}\cup \left \{\cdot \right \}$ & Union of two sets \\
			$\bf \Sigma \ge 0$  & $\bf \Sigma$ is a positive semi-definite matrix \\
			$\bf \Sigma > 0$  & $\bf \Sigma$ is a positive definite matrix \\
			${\bf x}_{a,k}$, ${\bf u}_{d,k}$,  & $k$-th instant values of ${\bf x}_{a}$, ${\bf u}_{d}$, ${\bf e}_{s}$, etc. \\
			${\bf e}_{s,k}$, etc. & \\
			$ {\left\{\cdot \right\}}^*$& Optimum quantity \\
			$[\cdot]_{ij}$ & $i$-th row and $j$-th column element of a matrix \\
			$\text{P}\left\{\cdot \right\}$ & Probability measure \\
			$\Pi_k$ & Probability of the event $\left\{\Gamma = k\right\}$ \\
			$\text{P}_k$ & Probability measure when the change point $\Gamma = k$ \\
			$\text{P}^{\Pi}\left\{\cdot\right\}$ & Average probability measure, $=\sum_{k=1}^\infty{\Pi}_k\text{P}_k\left\{\cdot\right\}$ \\
			$\text{E}^{\Pi}$ & Expectation with respect to probability measure $\text{P}^{\Pi}$ \\
			$\mathpzc{f}_{1,j}$, $\mathpzc{f}_{2,j}$, $\mu_{1,j}$,  & $j$-th instant values of $\mathpzc{f}_{1}$, $\mathpzc{f}_{2}$, $\mu_{1}$, $\mu_{2}$, $\Sigma_{1} $, $\Sigma_{2}$\\
			$\mu_{2,j}$, $\Sigma_{1,j} $, $\Sigma_{2,j}$ &  \\
			$|\cdot|$ & Determinant of a matrix or absolute value of a scalar \\
			$\bar {\left\{\cdot \right\}}$  & Mean value of a quantity   \\
			$\text{tr}(\cdot)$ & Trace of a matrix \\
			$\left\{{\bf X}\right\}^{k-1}_1$  & $\left\{X_i:1\le i \le k-1\right\}$\\
			$\mathbbm{1}_{\left\{ condition \right\}}$ & Indicator function, 1 if condition is true, 0 otherwise \\
			\hline \hline
		\end{tabular}
	\end{center}
\end{table}
\section{System Model} \label{sec:sys_model}
The system model during normal operations and the model with the data deception attack are discussed in this section.

\subsection{System model during normal operation} \label{subsec:normal_system}
 A schematic diagram of a standard NCS during the normal operation is shown in Fig.~\ref{fig:normal_system}.
\begin{figure}[h!]
	\centering
	\includegraphics[width=50mm]{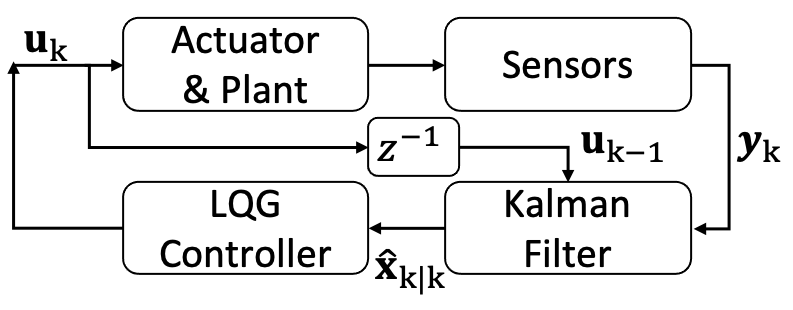}
	\caption{Schematic diagram of the system during normal operation.}
	\label{fig:normal_system}
\end{figure}
We assume a linear time-invariant MIMO plant with the following state update and measurement equations,
\begin{align}
	{\bf{x}}_{k+1}&={\bf A}{\bf{x}}_{k}+{\bf B}{\bf{u}}_{k}+{\bf{w}}_{k}, \label{eqn:states} \\ 
	{\bf{y}}_{k}&={\bf C}{\bf{x}}_{k}+{\bf{v}}_{k}, \label{eqn:output}
\end{align}
where ${\bf{x}}_{k}\in {\rm I\!R}^{n}$ and ${\bf{y}}_{k} \in {\rm I\!R}^{m}$ are the state and measurement vectors, respectively. ${\bf{u}}_{k}\in {\rm I\!R}^{p}$ is the control input vector. The process and observation noise vectors are denoted as ${\bf{w}}_{k} \in {\rm I\!R}^{n} \sim \mathcal{N}(0,{\bf Q})$ and ${\bf{v}}_{k} \in {\rm I\!R}^{m} \sim \mathcal{N}(0,{\bf R})$, respectively, with ${\bf Q} > \bf{0}$ and ${\bf R} > \bf{0}$. Here, ${\bf{A}}\in {\rm I\!R}^{n\times n}$, ${\bf{B}}\in {\rm I\!R}^{n\times p}$, ${\bf{Q}}\in {\rm I\!R}^{n\times n}$, ${\bf{C}}\in {\rm I\!R}^{m\times n}$, and ${\bf{R}}\in {\rm I\!R}^{m\times m}$. Process and observation noises are assumed to be iid and uncorrelated to each other and with the initial state vector. We also assume that the system has been operational for a very long time and is currently in steady-state.

The states of the system are estimated using the Kalman estimator. The sensor measurements are available to a remote estimator/controller, possibly over a wireless link, which may be vulnerable to malicious data deception attacks. In the absence of an attack, the time update and measurement update equations are as follows,
\begin{align}
{\hat{\bf{x}}}_{k|k-1} &={\bf A}{\hat{\bf{x}}}_{k-1|k-1}+{\bf B}{\bf{u}}_{k-1},  \label{eqn:xk_km1} \\ 
{\hat{\bf{x}}}_{k|k} &={\hat{\bf{x}}}_{k|k-1}+{\bf K}\gamma_k, \label{eqn:xk_k}
\end{align}
where ${\hat{\bf{x}}}_{k|k-1} =\text{E}[{\bf{x}}_{k}|\mathcal{I}_{k-1}]$ and ${\hat{\bf{x}}}_{k|k} =\text{E}[{\bf{x}}_{k}|\mathcal{I}_{k}]$ are the Kalman predicted and filtered states, respectively. $\text{E}[\cdot]$ denotes the expectation operator, and $\mathcal{I}_k \triangleq \left\{{\bf u}_0,{\bf u}_1, \cdots {\bf u}_k, {\bf y}_0, {\bf y}_1, \cdots {\bf y}_k \right\}$ is the set of all information up to time $k$. The innovation signal $\gamma_k$ and the steady state Kalman filter gain $\bf{K}$ are given as, 
\begin{align}
\gamma_k&={\bf y}_k-{\bf C}{\hat{\bf{x}}}_{k|k-1} \text{,} \label{eqn:gamma_normal} \\
{\bf K}&={\bf P}{\bf C}^T\left({\bf C}{\bf P}{\bf C}^T+{\bf R}\right)^{-1} \label{eqn:kalman_gain},
\end{align}
where ${\bf P}={\text E}\left[({\bf x}_k-{\hat{ \bf x}}_{k|k-1}) ({\bf x}_k-{\hat{ \bf x}}_{k|k-1})^T   \right]$ is the steady state error covariance. In steady-state ${\bf P}$ becomes the solution to the following algebraic Riccati equation,
\begin{equation}
{\bf P}={\bf A}{\bf P}{\bf A}^T+{\bf Q}-{\bf A}{\bf P}{\bf C}^T\left( {\bf C}{\bf P}{\bf C}^T +{\bf R} \right)^{-1}{\bf C}{\bf P}{\bf A}^T.
\end{equation}
 The estimated states are fed to a state feedback controller which is assumed to be an infinite horizon linear quadratic Gaussian (LQG) controller. The optimal control input ${\bf u}_k^*$ is derived by minimizing the following cost function, 
 \begin{equation}
 J=\lim_{T\to\infty}{\text E}\left[ \frac{1}{2T+1}\left\{ \sum_{k=-T}^T \left( {\bf x}^T_k{\bf W}{\bf x}_k+{\bf u}^T_k{\bf U}{\bf u}_k\right)\right\} \right] \label{eqn:lqg_cost}
 \end{equation}
 Here $\bf {W} \ge {\bf 0}$ and $\bf {U} \ge {\bf 0}$ are weight matrices. The optimal LQG control input turns out to be the following linear function of the estimated states, ${\bf u}^*_k ={\bf L}{\hat {\bf x}}_{k|k}$, where 
 \begin{equation*}
{\bf L} =-\left( {\bf B}^T{\bf S}{\bf B}+{\bf U}\right)^{-1}{\bf B}^T{\bf S}{\bf A}.
 \end{equation*}

 Here, $\bf {S}$ is the solution to the following algebraic Riccati equation,
 \begin{equation}
{\bf S}={\bf A}^T{\bf S}{\bf A}+{\bf W}-{\bf A}^T{\bf S}{\bf B}\left({\bf B}^T{\bf S}{\bf B}+{\bf U}\right)^{-1}{\bf B}^T{\bf S}{\bf A}. \label{eqn:S}
 \end{equation} 

  \subsection{Attack Model} \label{subsec:attack_model}
 We make the following assumptions regarding the capabilities and knowledge of an attacker:
 \begin{enumerate}
 	\item the attacker can access the sensor nodes and replace the true observations with fake data; 
 	\item the attacker has complete knowledge about the system and the controller, \ie, the attacker knows ${\bf A}$, ${\bf B}$, ${\bf C}$, ${\bf Q}$, ${\bf R}$, and ${\bf L}$;
 	\item the attacker can not access or alter the control signal. 
 \end{enumerate}
To launch a data deception attack, the attacker replaces the true observations ${\bf y}_k$ by the fake data ${\bf z}_k$ from $k \ge \Gamma$. A well-studied method to achieve this is to transmit the fake observations with significantly higher power than the true measurements from the sensors. As a result, the wireless control system receiver accepts the fake measurements as legitimate while rejecting the true measurements from the sensor nodes. Such attack models are also known as sensor spoofing attacks \cite{yilmaz2015survey,liu2020secure}. The fake observation data ${\bf z}_k$ is assumed to be generated from the following linear stationary stochastic process, 
\begin{align}
	{\bf z}_{k}&={\bf A}_a{\bf z}_{k-1}+{\bf w}_{a,k-1} \label{eqn:hidden_states_main},
\end{align}
where ${\bf w}_{a,k} \sim \mathcal{N}(0,{\bf Q}_a)$ is the iid noise vector at the $k$-th time instant, and ${\bf Q}_a \in {\rm I\!R}^{m \times m}$. A similar attack strategy is also studied in \cite{li2022measurement}, where the stealthiness of the attack signal is evaluated in terms of the KLD between the distributions of the fake and true observations. The attacker's system matrix $A_a$ and the noise covariance matrix $Q_a$ should be designed in such a way so that the fake data $z_k$ mimics the statistical properties of the true measurement $y_k$. $A_a$ mainly models the correlations between the current and past measurements, and $Q_a$ models the uncertainty. Designing $A_a$ and $Q_a$ in such a way increases the stealthiness of the attack signal. A schematic diagram of the system under the data deception attack is shown in Fig.~\ref{fig:attack_model}.
 \begin{figure}[h!]
	\centering
	\includegraphics[width=60mm]{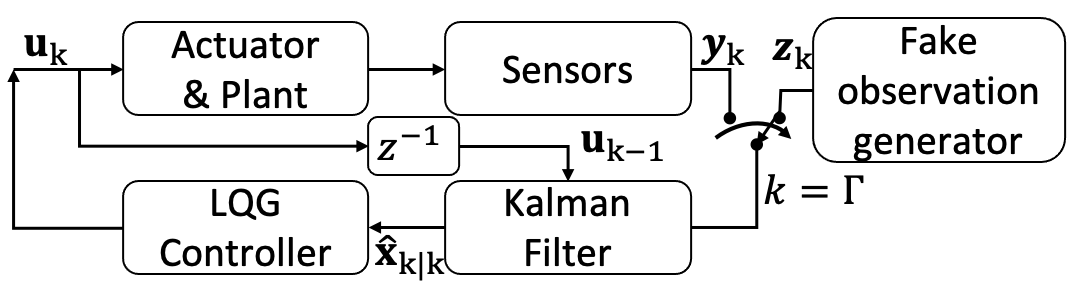}
	\caption{Schematic diagram of the system under attack.}
	\label{fig:attack_model}
\end{figure}
\begin{remark}
In general, for linear control systems, the measurement vector ${\bf y}_k$ can be modelled as a stochastic process that is dependent on its past values with an additive i.i.d noise component, \ie, similar to (\ref{eqn:hidden_states_main}). In other words, the attack model in (\ref{eqn:hidden_states_main}) mimics the linear stationary stochastic model of ${\bf y}_k$, whic­h makes it challenging to detect. In addition to that, the attack model in (\ref{eqn:hidden_states_main}) can make closed-loop control systems unstable, as discussed in \cite{watermarking_tac}, which may cause a significant amount of damage to CPS. Since the attacker's objectives are to cause damage to the CPS and to remain stealthy in doing so, the attack model considered in this paper (\ref{eqn:hidden_states_main}) facilitates the attacker in achieving both the objectives, thus illustrating the significance of such an attack model. Moreover, such an attack model can be used to detect replay attacks after a few modifications, as demonstrated in \cite{reduced_watermarking_ACC}.
\end{remark}

During the attack, \ie, for $k \ge \Gamma$, the innovation signal will take the following form, 
 \begin{align}
{\bf \gamma}_k &= {\bf z}_{k} - {\bf C}{\hat{\bf{x}}}_{k|k-1},
\label{eqn:gamma_attack} 
 \end{align} 
whereas ${\hat{\bf{x}}}_{k|k-1}$ and ${\hat{\bf{x}}}_{k|k}$ will follow the same time update and measurement update equations (\ref{eqn:xk_km1}) and (\ref{eqn:xk_k}), respectively. However, the $\gamma_k$ in (\ref{eqn:xk_k}) for $k \ge \Gamma$ will follow (\ref{eqn:gamma_attack}). Note that, after the attack start point, \ie, $k \ge \Gamma$, the defender does not change the Kalman filter. However, since the attacker replaces $\bf y_k$ by the fake data $\bf z_k$ from $k \ge \Gamma$, the innovation signal $\gamma_k$ automatically takes the form given in (\ref{eqn:gamma_attack}). 

As studied in several works of literature, the distribution of the attack start point can be modelled as exponential distribution for continuous-time systems \cite{jonsson1997quantitative,arnold2014time}. In \cite{jonsson1997quantitative}, the authors collected empirical data from intrusion experiments and found that the attack start points are approximately exponentially distributed. On the other hand, the authors formalized the semantics of attack trees and used them for the probabilistic timed evaluation of attack scenarios in \cite{arnold2014time}. Additionally, the authors studied various practical systems, including the famous Stuxnet attack  \cite{Langner2011}, and derived the distribution of the attack start time to be exponential. Since the exponential and geometric distributions play analogous roles in the continuous and discrete time domains, respectively \cite{prochaska1973note}, we have modelled the attack start point $\Gamma$ to be an RV that follows a geometric distribution with parameter $\rho$, where $0< \rho < 1$. Here $\rho$ is a design parameter reflecting the defender's belief of how often attacks occur. For the proposed method, $\rho$ is a parameter that needs to be set by the defender. A vulnerability analysis of the system can decide the value of $\rho$, see \cite{jonsson1997quantitative,arnold2014time}. From the derived approximate analytical expressions of ADD (\ref{eqn:ADD_2nd}), FAR (\ref{eqn:FAR}) and $\Delta LQG$ (\ref{eqn:DeltaLQG}), we can say that ADD and $\Delta LQG$ will not be affected much by the difference in the chosen $\rho$ and the attacker's true $\rho$,  as long as both $\rho \ll 1$, which is a realistic assumption since attacks are rare events. On the other hand, FAR will increase if the chosen $\rho$ is higher than the attacker's true $\rho$ and vice versa. The defender can thus choose a suitable $\rho$, depending on the specification on the maximum false alarm rate. Finally, we can write the prior probability $\Pi_k \triangleq \text{P}\left\{\Gamma = k\right\}$ in the following form \cite{Banerjee2012}, 
\begin{align}
\Pi_k = \Pi_0\mathbbm{1}_{\left\{k=0\right\}} +\left(1-\Pi_0\right)\rho\left(1-\rho\right)^{k-1}\mathbbm{1}_{\left\{k\ge1\right\}}.
\label{eqn:Pi}
\end{align}
Here, $\Pi_0 \triangleq \text{P}\left\{\Gamma \le 0\right\}$, \ie, $\Pi_0$ is the prior probability of the attack happening before the start of the observation. $\mathbbm{1}_{\left\{condition \right\}}$ is the indicator function, which takes the value 1 if the \textit{condition} is true, or 0 otherwise. In general, $0 \le \Pi_0 < 1$. However, for our problem formulation, we have taken $\Pi_0 = 0$. We assume that the defender does not know the exact value of $\Gamma$, but knows the prior distribution of $\Gamma$.  
\section{Proposed detection strategy}\label{sec:def_strategy}
We perform the following hypothesis test to detect the presence of an attack, 
\begin{description}
	\item[$H_0$:]No attack present
	\item[$H_1$:]Attack present in the system
\end{description}
We parsimoniously add an iid watermarking signal, given as 
\begin{equation}
 {\bf e}_k \sim \mathcal{N}({\bf 0},{\bf \Sigma}_e), \label{eqn:ek}
 \end{equation}
to the optimal LQG control input, ${\bf u}^*_k$,  to improve the detectability of the attack, see (\ref{eqn:add_ek_sk}). 
The decision of adding or not adding the watermarking and the selection of hypothesis for the $k$-th time instant is controlled by the optimal policy ${\bf{u}}_d^*$. Here, the subscript $d$ of ${\bf{u}}_d^*$ indicates that the optimal policy is derived using dynamic programming. The policy ${\bf{u}}_{d,k}$ decides the values of the following two control variables $s_k$ and $d_k$ at the $k$-th time instant, 
\begin{equation}
	s_k=\begin{cases}0, & \text{No watermarking for } (k+1) \text{-th time instant.}\\1, & \text{Watermarking added for } (k+1) \text{-th time instant.}\end{cases}
	\label{eqn:sk}
\end{equation}
\begin{equation}
	d_k=\begin{cases}0, & \text{Hypothesis } H_0 \text{ selected, process continues.}\\1, & \text{Hypothesis } H_1 \text{ selected, process terminated.}\end{cases}
	\label{eqn:d_k}
\end{equation}
\begin{equation}
	{\bf u}_k = {\bf u}^*_k+s_{k-1}{\bf e}_k.
	\label{eqn:add_ek_sk}
\end{equation}
Figure~\ref{fig:reduced_watermarking} illustrates the proposed watermarking and attack detection scheme with a schematic diagram of the system. The components enclosed inside the blue dotted rectangle are assumed to be located at a secure location.  
 \begin{figure}[h!]
	\centering
	\includegraphics[width=80mm]{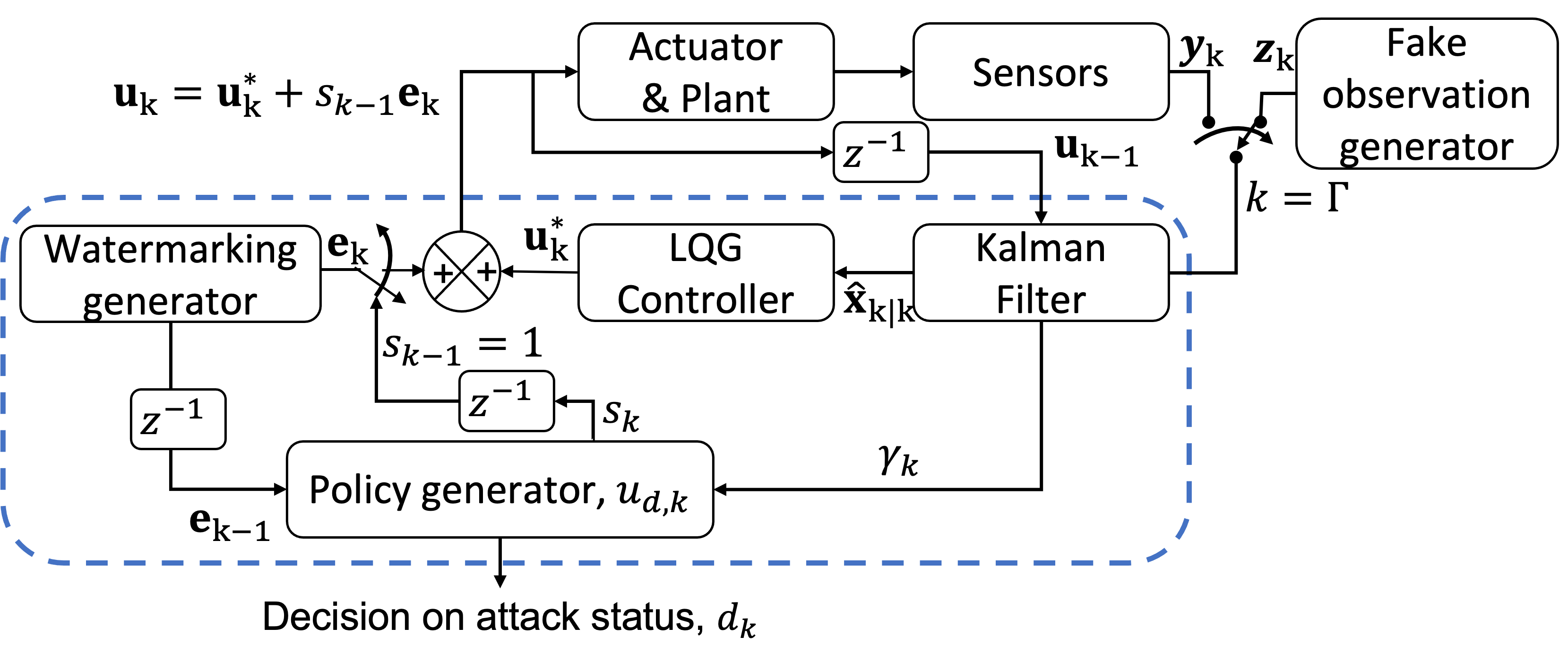}
	\caption{Schematic diagram of the system with the proposed watermarking scheme.}
	\label{fig:reduced_watermarking}
\end{figure}

\subsection{Problem formulation} \label{subsec:problem_formulation}
Our objective is to find the optimal policy ${\bf u}_d^*$ that minimizes the ADD for fixed upper bounds on FAR and ANW. First, we introduce the formal definitions of FAR, ADD and ANW as follows. The definitions of FAR and ADD are similar to \cite{Tartakovsky2005}.  \\
\textbf{False alarm rate (FAR)}: FAR is defined as
\begin{equation}
	FAR \triangleq \text{P}^{\Pi}\left\{\tau < \Gamma\right\}.
	\label{eqn:FAR_def}
\end{equation}
Here, $\text{P}^{\Pi}$ indicates the average probability measure. $\text{P}^{\Pi}\left\{\Omega\right\}= \sum_{k=1}^\infty{\Pi}_k\text{P}_k\left\{\Omega\right\}$, where $\Omega$ is any event and $\text{P}_k$ is the probability measure when the change point $\Gamma = k$. $\tau$ is the time instant when the hypothesis $H_1$ is selected.  \\
\textbf{Average detection delay (ADD)}: ADD is defined as
\begin{equation}
	ADD \triangleq \text{E}^{\Pi}\left[\tau-\Gamma | \tau \ge \Gamma\right].
	\label{eqn:ADD_def}
\end{equation}
Here, $\text{E}^{\Pi}$ denotes the expectation with respect to the probability measure $\text{P}^{\Pi}$. \\
\textbf{Average number of watermarking events (ANW) before attack}: ANW is defined as
\begin{equation}
	ANW \triangleq \text{E}^{\Pi}\left[\sum_{i=1}^{\text{min}\left(\tau,\Gamma-1\right)}s_i \right].
	\label{eqn:ANE_def}
\end{equation}
Here, $s_i$ is the same variable as given in (\ref{eqn:sk}). \\
Now, we formulate the following optimization problem, 
\begin{equation}
	\begin{aligned}
		&\min_{{\bf u}_d} \  ADD, \\
		&\textrm{s.t.}\   FAR \le FAR_{th}, \\
		&ANW \le ANW_{th},
	\end{aligned}
	\label{eqn:cost_fun}
\end{equation}
where $FAR_{th}$ and $ANW_{th}$ are the user-selected thresholds. Then, the constrained optimization problem of (\ref{eqn:cost_fun}) is converted into an unconstrained Lagrangian form as follows.  The unconstrained Lagrangian form adopted in this paper is similar to \cite{Banerjee2012,beutler1985optimal}, except the term $\lambda_eANW$, and it reads as 
\begin{equation}
	J^*=\min_{{\bf u}_d} \ ADD+\lambda_fFAR+\lambda_eANW,
	\label{eqn:J_opt}
\end{equation}
where $\lambda_f \ge 0 $ and $\lambda_e \ge 0$ are the Lagrangian multipliers. A new state variable $\theta_k$ is defined as
\begin{equation}
	\theta_k \triangleq \begin{cases}0, & \text{No attack, } \\1, & \text{System under attack, } \\ \text{T}_e, & \text{Attack detected by hypothesis testing} \\  & \text{and process terminated.}\end{cases}
	\label{eqn:theta_k}
\end{equation}
Similar to \cite{Banerjee2012}, ADD, FAR and ANW can also be expressed in terms of the control variables, $s_k$ and $d_k$, and the state variable $\theta_k$ as follows,
\begin{align}
	ADD&=\text{E}\left[\sum_{k=1}^\tau \mathbbm{1}_{\left\{\theta_k=1\right\}}\mathbbm{1}_{\left\{d_k=0\right\}}\right], \label{eqn:ADD_pk} \\
	FAR &= \text{E}\left[\sum_{k=1}^\tau \mathbbm{1}_{\left\{\theta_k=0\right\}}\mathbbm{1}_{\left\{d_k=1\right\}}\right], \text{ and} \label{eqn:FAR_pk} \\
	ANW &= \text{E}\left[\sum_{k=1}^\tau \mathbbm{1}_{\left\{\theta_k=0\right\}}\mathbbm{1}_{\left\{s_k=1\right\}} \mathbbm{1}_{\left\{d_k=0\right\}}\right]. \label{eqn:ANW_pk} 
\end{align}
Using (\ref{eqn:ADD_pk})-(\ref{eqn:ANW_pk}), the cost function of (\ref{eqn:J_opt}) can be expressed as 
\begin{equation}
	J^*=\min_{{\bf u}_d}\ \text{E}\left[\sum_{k=1}^\tau g_k\left(\theta_k,s_k,d_k\right)\right],
	\label{eqn:J_opt_thetak}
\end{equation}
where $g_k(\cdot)$ is the per stage cost, expressed as
\begin{equation}
	\begin{aligned}
		&g_k\left(\theta_k,s_k,d_k\right)=\mathbbm{1}_{\left\{\theta_k \ne \text{T}_e\right\}}\left[\mathbbm{1}_{\left\{\theta_k = 1\right\}}\mathbbm{1}_{\left\{d_k =0 \right\}} \right.  \\
		& \left. + \lambda_f\mathbbm{1}_{\left\{\theta_k =0 \right\}}\mathbbm{1}_{\left\{d_k =1 \right\}}+\lambda_e\mathbbm{1}_{\left\{\theta_k =0 \right\}}\mathbbm{1}_{\left\{s_k =1 \right\}}\mathbbm{1}_{\left\{d_k =0 \right\}}\right].
	\end{aligned}
	\label{eqn:gk}
\end{equation}
Here, the first, second and third terms of (\ref{eqn:gk}) come from ADD, FAR and ANW, respectively. For the stochastic optimal control problem defined in (\ref{eqn:J_opt_thetak}), the state  $\theta_k$ is not observable to the defender. Therefore, we replace the state  $\theta_k$ by it's sufficient statistics $p_k$. The sufficient statistics $p_k$, \ie, the posterior probability of attack at $k$-th time instant is defined as, $p_k \triangleq \text{P}\left\{\Gamma\le k|\Psi_k\right\}=\text{E}\left[\mathbbm{1}_{\left\{\theta_k = 1\right\}}|\Psi_k\right]$, where $\Psi_k$ |. The optimization problem in (\ref{eqn:J_opt_thetak}) is then redefined and solved using $p_k$ as discussed in details in the following Sub-section~\ref{subsec:dymanic_programming}.

The accessibility hypothesis discussed in \cite{beutler1985optimal} tells us that under this hypothesis, for every stationary deterministic policy ${\bf u}_{d} \in \mathcal{U}$, any arbitrary state, say $\theta_k$ is accessible from each starting state $\theta_k = \theta_0$. Here, $\mathcal{U}$ is the set of all permissible stationary deterministic policies. Under the accessibility hypothesis, the dynamic programming equation using the cost function of (\ref{eqn:J_opt_thetak}) is solvable by at least one stationary deterministic policy for each $\lambda_f \ge 0$ and $\lambda_e \ge 0$ \cite{beutler1985optimal}. 

\subsection{Finding the optimal policy}
\label{subsec:dymanic_programming}
This section discusses the solution approach taken to solve the optimization problem of (\ref{eqn:J_opt_thetak}) in the following three main steps. 
\subsubsection{Selection of test signals}
Combining (\ref{eqn:states})-(\ref{eqn:gamma_normal}) and (\ref{eqn:gamma_attack}), we can represent the innovation signal as
\begin{align}
	&\text{for } k<\Gamma, \nonumber \\
	&\gamma_k ={\bf C}{\bf A}\left({\bf x}_{k-1}-{\hat {\bf x}}_{k-1|k-1} \right)+{\bf C}{\bf w}_{k-1}+{\bf v}_k \text{, and} \label{eqn:gamma_vs_e}\ \\
	&\text{for } k\ge \Gamma,  \nonumber \\
	&\gamma_k= {\bf z}_k-{\bf C}\left({\bf A}+{\bf B}{\bf L} \right){\hat {\bf x}}_{k-1|k-1}-{\bf C}{\bf B}s_{k-2}{\bf e}_{k-1} \label{eqn:gamma_attack_vs_e}.
\end{align}
So, the innovation signal is dependent on the watermarking signal after the attack, see (\ref{eqn:gamma_attack_vs_e}). On the contrary, the innovation signal is independent of the watermarking signal before the attack, see (\ref{eqn:gamma_vs_e}). It is assumed that the attacker will be replacing the true stationary observation ${\bf y}_k$ with fake but stationary data ${\bf z}_k$ to remain stealthy. In addition to that, as discussed in Sub-section~\ref{subsec:struc_opt_soln}, the optimal policy ${\bf u}^*_d$ is also a stationary one. Therefore, the innovation signal will be stationary but with different distributions before and after the attack, as $k \rightarrow \infty$. Also, from the properties of the Kalman filter, we know the innovation signal is iid before the attack. Additionally, the use of the joint statistics of the innovation signal and the watermarking signal increases the KLD compared to the case where the statistics of the innovation signal alone is used, see Theorem 1 and Remark 1 from \cite{Salimi2019}, where the improvement in KLD has been quantified for a single-input single-output (SISO) system. These reasons motivate us to use the joint distribution of the innovation signal and the watermarking signal to generate the test statistics for attack detections.

\subsubsection{Derivation of test statistics}
We use the Shiryaev statistics because of its asymptotic optimality for a fixed upper bound on FAR as stated in Theorem 1 from \cite{Tartakovsky2017}. The data is assumed to be iid before and after the change point in \cite{Tartakovsky2017}. In contrast to \cite{Tartakovsky2017}, in our study, the innovation signal $\gamma_k$ is iid before the attack and non-iid after the attack. From \cite{Tartakovsky2014}, the Shiryaev statistics $SR_k$ at the $k$-th instant in time for our problem formulation can be written as
	\begin{equation}
	SR_k=\sum_{i=1}^k\prod_{j=i}^k\frac{\mathcal{L}_j}{1-\rho},
	\label{eqn:SRn_1st}
\end{equation}
where $i$ is the candidate change or attack start point, and ${\mathcal{L}_j}$ is the likelihood ratio. The expression for ${\mathcal{L}_j}$ is given in Lemma~\ref{lemma:SRn}. After the change point, $SR_k$ increases on average. In the original Shiryaev procedure, a change is detected once $SR_k$ crosses a predefined threshold for the first time.
\begin{lemma} \label{lemma:SRn}
	The likelihood ratio ${\mathcal{L}_j}$ used to derive the Shiryaev statistics (\ref{eqn:SRn_1st}) considering the joint distribution of the innovation signals ((\ref{eqn:gamma_vs_e}) and (\ref{eqn:gamma_attack_vs_e})) and the watermarking signal (\ref{eqn:ek}), takes the following form,
\begin{equation}
	\mathcal{L}_j=\begin{cases}\mathcal{L}_{a,j}, & j >i \\\mathcal{L}_{b,j}, & j =i\end{cases}, \textit{ and}
	\label{eqn:Lj_2nd}
\end{equation}
\begin{align}
	\mathcal{L}_{a,j}&=\frac{\mathpzc{f}_{1,j}\left(\gamma_j|\left\{\gamma\right\}_1^{j-1},\left\{{\bf e}_s\right\}_1^{j-1} \right)}{\mathpzc{f}_0\left(\gamma_j\right)}, \label{eqn:Laj} \\
	\mathcal{L}_{b,j}&=\frac{\mathpzc{f}_{2,j}\left(\gamma_j|\left\{\gamma\right\}_1^{j-1},\left\{{\bf e}_s\right\}_1^{j-1} \right)}{\mathpzc{f}_0\left(\gamma_j\right)}. \label{eqn:Lbj}
\end{align}
Here, $\mathpzc{f}_{1,j}\left(\cdot|\cdot \right)$, $\mathpzc{f}_{2,j}\left(\cdot|\cdot \right)$ and $\mathpzc{f}_{0}\left(\cdot \right)$ denote the distributions for $j>i$, $j=i$ and $j<i$, respectively, and ${\bf e}_{s,j} = s_{j-1}{\bf e}_j$. $\mathpzc{f}_{1,j}\left(\cdot|\cdot \right)$, $\mathpzc{f}_{2,j}\left(\cdot|\cdot \right)$ and $\mathpzc{f}_{0}\left(\cdot \right)$ take the following forms,
\begin{align}
	\mathpzc{f}_{1,j}\left(\cdot|\cdot \right) &= \mathcal{N}\left( \mu_{1,j},\Sigma_{1,j}|{\bf z}_{j-1},{\hat {\bf x}}_{j-1|j-1},{\bf e}_{s,j-1}\right), \label{eqn:f1j}\ \\
	\mathpzc{f}_{2,j}\left(\cdot|\cdot \right)&=\mathcal{N}\left( \mu_{2,j},\Sigma_{2,j}|{\hat {\bf x}}_{j-1|j-1},{\bf e}_{s,j-1}\right), \label{eqn:f2j} \ \\
	\mathpzc{f}_{0}\left(\cdot \right) &= \mathcal{N}\left( {\bf 0},\Sigma_{0}\right). \label{eqn:f0j}
\end{align}
Here
\begin{align}
	&\mu_{1,j} ={\bf A}_a{\bf z}_{j-1}-{\bf C}\left({\bf A}+ {\bf B}{\bf L} \right){\bf {\hat x}}_{j-1|j-1} - {\bf C}{\bf B}{\bf e}_{s,j-1} \label{eqn:mu_1j} \ \\
		&\mu_{2,j} = -{\bf C}\left({\bf A}+ {\bf B}{\bf L} \right){\bf {\hat x}}_{j-1|j-1} - {\bf C}{\bf B}{\bf e}_{s,j-1} \label{eqn:mu_2j} \ \\
		&\Sigma_{1,j} ={\bf Q}_a \label{eqn:sigma_1j} \ \\
		&\Sigma_{2,j} = {\bf Q}_z \label{eqn:sigma_2j} \ \\
		&\Sigma_{0} = {\bf C}{\bf P}{\bf C}^T+{\bf R} \label{eqn:sigma_0j}
\end{align} 
\end{lemma}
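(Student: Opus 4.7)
The plan is to start from the general Shiryaev statistic for non-i.i.d.\ observations, in which the multiplicative likelihood ratio $\mathcal{L}_j$ at stage $j$ is the conditional density of the current observation given the entire observed history under the hypothesis $\Gamma=i$, divided by the corresponding pre-change conditional density. Because the defender's watermarking $\mathbf{e}_j$ is generated independently of the measurement process and has the same marginal law under both hypotheses, the joint density over $(\gamma_j,\mathbf{e}_{s,j})$ factorizes and the $\mathbf{e}_{s,j}$-marginal cancels, so that $\mathcal{L}_j$ reduces to a conditional density ratio for $\gamma_j$ alone given $\{\gamma\}_1^{j-1}$ and $\{\mathbf{e}_s\}_1^{j-1}$. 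The remaining task is therefore purely a distributional one: identify $\mathpzc{f}_0$, $\mathpzc{f}_{1,j}$, and $\mathpzc{f}_{2,j}$ explicitly.

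First I would establish $\mathpzc{f}_0$. Under no attack, (\ref{eqn:gamma_vs_e}) gives $\gamma_j$ as an affine function of the stationary estimation error and the independent Gaussian noises $\mathbf{w}_{j-1},\mathbf{v}_j$; standard steady-state Kalman properties yield $\gamma_j \sim \mathcal{N}(\mathbf{0},\Sigma_0)$ with $\Sigma_0=\mathbf{C}\mathbf{P}\mathbf{C}^T+\mathbf{R}$, i.i.d.\ and independent of $\{\mathbf{e}_s\}_1^{j-1}$ (since the watermark enters only through the known control input, which is already absorbed into the one-step predictor). This gives (\ref{eqn:f0j}).

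Next I would handle the post-change regime $j>i$. Substituting the attacker's recursion (\ref{eqn:hidden_states_main}) into the attacked innovation expression (\ref{eqn:gamma_attack_vs_e}) yields
\begin{equation*}
\gamma_j=\mathbf{A}_a\mathbf{z}_{j-1}+\mathbf{w}_{a,j-1}-\mathbf{C}(\mathbf{A}+\mathbf{B}\mathbf{L})\hat{\mathbf{x}}_{j-1|j-1}-\mathbf{C}\mathbf{B}\mathbf{e}_{s,j-1}.
\end{equation*}
The three quantities $\mathbf{z}_{j-1}$, $\hat{\mathbf{x}}_{j-1|j-1}$, $\mathbf{e}_{s,j-1}$ are measurable with respect to the conditioning $\sigma$-algebra generated by $\{\gamma\}_1^{j-1},\{\mathbf{e}_s\}_1^{j-1}$: the filtered state evolves deterministically through (\ref{eqn:xk_km1})--(\ref{eqn:xk_k}), and $\mathbf{z}_{j-1}$ is recovered as $\gamma_{j-1}+\mathbf{C}\hat{\mathbf{x}}_{j-1|j-2}$. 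Only $\mathbf{w}_{a,j-1}\sim\mathcal{N}(\mathbf{0},\mathbf{Q}_a)$ remains random, which immediately gives (\ref{eqn:f1j}) with mean $\mu_{1,j}$ in (\ref{eqn:mu_1j}) and covariance $\mathbf{Q}_a$. Dividing by $\mathpzc{f}_0$ produces $\mathcal{L}_{a,j}$.

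Finally I would treat the boundary case $j=i$, the first attacked instant. At this stage $\mathbf{z}_{j-1}$ is not in the available information (all prior observations were genuine and enter only through $\hat{\mathbf{x}}_{j-1|j-1}$), so one must use the marginal distribution of $\mathbf{z}_j$. By the stationarity hypothesis on (\ref{eqn:hidden_states_main}) (requiring $\mathbf{A}_a$ to be Schur stable), $\mathbf{z}_j\sim\mathcal{N}(\mathbf{0},\mathbf{Q}_z)$ where $\mathbf{Q}_z$ solves the discrete Lyapunov equation $\mathbf{Q}_z=\mathbf{A}_a\mathbf{Q}_z\mathbf{A}_a^T+\mathbf{Q}_a$. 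Substituting into (\ref{eqn:gamma_attack}) yields (\ref{eqn:f2j}) with mean $\mu_{2,j}$ in (\ref{eqn:mu_2j}) and covariance $\mathbf{Q}_z$, from which $\mathcal{L}_{b,j}$ follows. The main obstacle I expect is the careful $\sigma$-algebra bookkeeping in the $j>i$ case, specifically verifying that $\mathbf{z}_{j-1}$ is indeed conditioning-measurable so that the attacker's past hidden state can be treated as known; a secondary subtlety is the appeal to stationarity of $\mathbf{z}_j$ at $j=i$, which implicitly requires the attacker to initialize the process in steady state.
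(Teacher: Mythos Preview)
Your proposal is correct and follows essentially the same approach as the paper: factor out the watermarking marginal, read off $\mathpzc{f}_0$ from the steady-state Kalman innovation covariance, and then identify the conditional laws of $\gamma_j$ under attack by isolating the fresh noise term. The one methodological difference is in the $j>i$ step: you argue directly that $\mathbf{z}_{j-1}$, $\hat{\mathbf{x}}_{j-1|j-1}$, and $\mathbf{e}_{s,j-1}$ are measurable with respect to the conditioning $\sigma$-algebra (so the only residual randomness is $\mathbf{w}_{a,j-1}$), whereas the paper first expands $\hat{\mathbf{x}}_{j-1|j-1}$ recursively in terms of the full history and the initial filtered state, and then invokes Schur stability of $\mathbf{A}+\mathbf{B}\mathbf{L}$ to discard the initial-condition term asymptotically. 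Your measurability argument is cleaner and exact rather than asymptotic; your explicit treatment of $\mathbf{Q}_z$ via the Lyapunov equation for the $j=i$ case is also more complete than the paper's, which leaves that step implicit.
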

\begin{proof}[Proof]
	The proof of Lemma~\ref{lemma:SRn} is provided in Appendix~\ref{apdx:SRn}.
\end{proof}
Equation (\ref{eqn:SRn_1st}) is same as the original Shiryaev statistics, where $\Pi_0$ is assumed to be 0, see (6.9) from \cite{Tartakovsky2014}. However, the term ${\mathcal{L}_j}$ in (\ref{eqn:SRn_1st}) is derived exclusively for the problem under study, where the test data is iid before the change point and non-iid with stationary distributions after the change point. Furthermore, Lemma 1 shows that the dependency of the test data $\gamma_j$ on the previous values of $\gamma$ and ${\bf e}_{s}$ from the time index $1$ to $j-1$ can be approximated as given in (\ref{eqn:Laj})-(\ref{eqn:mu_2j}), where $\gamma_j$ is only dependent on the immediate past values, \ie, at time index $j-1$, of $\bf z$, $\hat {\bf x}$ and $\bf e_s$. 
\begin{remark}
The likelihood ratios using the distributions of the innovation signal alone, say, $\mathcal{L}_{c,j}$ and $\mathcal{L}_{d,j}$ for $j <i$ and $j = i$, respectively, can be evaluated from Lemma~\ref{lemma:SRn} by using ${\bf e}_{s,j-1}=0$ in (\ref{eqn:f1j})-(\ref{eqn:f2j}). Therefore, we can write $\mathcal{L}_{c,j} = \mathcal{L}_{a,j}\mid_{e_{s,j-1}=0}$ and $\mathcal{L}_{d,j} = \mathcal{L}_{b,j}\mid_{e_{s,j-1}=0}$.
\end{remark}
We have applied the value iteration from \cite{bertsekas1995dynamic} using sufficient statistics $p_k$ to solve (\ref{eqn:J_opt_thetak}), which is an infinite horizon dynamic programming problem with a termination state. The relationship between the Shiryaev statistics $SR_k$ and the posterior probability of attack $p_k$ is given by, see (6.10) from \cite{Tartakovsky2014}, 
\begin{equation}
	p_k=\frac{SR_k}{SR_k+1/\rho}.
	\label{eqn:pk_1st}
\end{equation}
Lemma~\ref{lemma:pk_recur} provides the recursion formula of $p_k$, which is used for the value iteration.
\begin{lemma} \label{lemma:pk_recur}
The posterior probability of attack at the $k$-th time instant, $p_k$, for the test data $\gamma_k$ (iid (\ref{eqn:gamma_vs_e}) and non-iid (\ref{eqn:gamma_attack_vs_e})) and ${\bf e}_k$ (\ref{eqn:ek}), can be updated using the following recursion formula, when the attack start point is geometrically distributed with parameter $\rho$,  
	\begin{align}
		&p_k=\frac{p_{k-1}{\mathcal L}_{c,k}+\left(1-p_{k-1}\right)\rho{\mathcal L}_{d,k}}{\left(1-\rho\right)\left(1-p_{k-1}\right)+p_{k-1}{\mathcal L}_{c,k}+\left(1-p_{k-1}\right)\rho{\mathcal L}_{d,k}}, \nonumber \\
		& \text{when }s_{k-2} = 0 \text{, and} \nonumber \\ 
		&p_k=\frac{p_{k-1}{\mathcal L}_{a,k}+\left(1-p_{k-1}\right)\rho{\mathcal L}_{b,k}}{\left(1-\rho\right)\left(1-p_{k-1}\right)+p_{k-1}{\mathcal L}_{a,k}+\left(1-p_{k-1}\right)\rho{\mathcal L}_{b,k}},  \nonumber \\
		& \text{otherwise}.
		\label{eqn:pk_recurr}
	\end{align}
\end{lemma}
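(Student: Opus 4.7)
The plan is to derive the recursion directly from the definition of the posterior probability $p_k=\text{P}\{\Gamma\le k\mid \Psi_k\}$, where $\Psi_k$ collects the observed innovations and applied watermarking offsets up to time $k$, and to then identify the resulting ratios of conditional densities with the likelihood ratios supplied by Lemma~\ref{lemma:SRn}. The geometric prior on $\Gamma$ and the iid/non-iid structure from (\ref{eqn:gamma_vs_e})--(\ref{eqn:gamma_attack_vs_e}) are the two ingredients that drive the algebra.

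First I would split the event $\{\Gamma\le k\}$ as the disjoint union $\{\Gamma\le k-1\}\cup\{\Gamma=k\}$ and apply Bayes' rule to condition on the pair $(\Psi_{k-1},\gamma_k)$, writing
\begin{equation*}
p_k=\frac{\mathpzc{f}(\gamma_k\mid \Gamma\le k-1,\Psi_{k-1})\,\text{P}\{\Gamma\le k-1\mid\Psi_{k-1}\}+\mathpzc{f}(\gamma_k\mid \Gamma=k,\Psi_{k-1})\,\text{P}\{\Gamma=k\mid\Psi_{k-1}\}}{\mathpzc{f}(\gamma_k\mid \Psi_{k-1})}.
\end{equation*}
Next I would invoke the memoryless property of the geometric change point: conditional on $\{\Gamma\ge k\}$ (equivalently $\{\Gamma>k-1\}$), the event $\{\Gamma=k\}$ has probability $\rho$. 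Because $\Psi_{k-1}$ carries no information about $\Gamma$ beyond the event $\{\Gamma\le k-1\}$ versus $\{\Gamma>k-1\}$, this gives $\text{P}\{\Gamma\le k-1\mid\Psi_{k-1}\}=p_{k-1}$ and $\text{P}\{\Gamma=k\mid\Psi_{k-1}\}=(1-p_{k-1})\rho$. Expanding the denominator over the three disjoint events $\{\Gamma\le k-1\},\{\Gamma=k\},\{\Gamma>k\}$ yields a third term with weight $(1-p_{k-1})(1-\rho)$ and conditional density $\mathpzc{f}_0(\gamma_k)$ from (\ref{eqn:f0j}).

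Then I would identify the three conditional densities of $\gamma_k$ using Lemma~\ref{lemma:SRn}. For $\Gamma\le k-1$, the index $k$ satisfies $j>i$ in the notation of (\ref{eqn:Laj}), so the density is $\mathpzc{f}_{1,k}$; for $\Gamma=k$ it is $\mathpzc{f}_{2,k}$; and for $\Gamma>k$ it is the pre-attack density $\mathpzc{f}_0$. Dividing numerator and denominator by $\mathpzc{f}_0(\gamma_k)$ converts these conditional densities into $\mathcal{L}_{a,k}$ and $\mathcal{L}_{b,k}$, producing the general recursion. The case split on $s_{k-2}$ comes from observing that ${\bf e}_{s,k-1}=s_{k-2}{\bf e}_{k-1}$ enters the means (\ref{eqn:mu_1j})--(\ref{eqn:mu_2j}) only through the product $s_{k-2}{\bf e}_{k-1}$; when $s_{k-2}=0$ this term vanishes and, by Remark~2, $\mathcal{L}_{a,k}$ and $\mathcal{L}_{b,k}$ collapse to $\mathcal{L}_{c,k}$ and $\mathcal{L}_{d,k}$, giving the first branch of (\ref{eqn:pk_recurr}).

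The main obstacle I anticipate is justifying that $\Psi_{k-1}$ carries no extra information about $\{\Gamma=k\}$ beyond the coarse partition $\{\Gamma\le k-1\}$ versus $\{\Gamma>k-1\}$; this relies on the memorylessness of the geometric prior together with the fact that the control policy ${\bf u}_d$, the state estimate $\hat{\bf x}_{k-1|k-1}$, and the exogenous watermarking ${\bf e}_{k-1}$ are measurable with respect to $\Psi_{k-1}$ and do not themselves influence the latent $\Gamma$. Once that measurability/independence bookkeeping is laid out cleanly, the remaining algebra reduces to dividing through by $\mathpzc{f}_0(\gamma_k)$ and matching terms, which should be routine.
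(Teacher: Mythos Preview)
Your proposal is correct, but it follows a different route from the paper's own argument. The paper does not work with the posterior $p_k$ directly. Instead, it first derives a recursion for the Shiryaev statistic $SR_k$ from its definition (\ref{eqn:SRn_1st}), obtaining
\[
SR_k=\frac{\mathcal{L}_k}{1-\rho}\,SR_{k-1}+\frac{\mathcal{L}'_k}{1-\rho},
\]
with $(\mathcal{L}_k,\mathcal{L}'_k)=(\mathcal{L}_{a,k},\mathcal{L}_{b,k})$ or $(\mathcal{L}_{c,k},\mathcal{L}_{d,k})$ according to $s_{k-2}$, and then substitutes the bijection $p_k=SR_k/(SR_k+1/\rho)$ from (\ref{eqn:pk_1st}) to translate this into the recursion (\ref{eqn:pk_recurr}). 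Your approach instead applies Bayes' rule straight to $p_k=\text{P}\{\Gamma\le k\mid\Psi_k\}$, splits over $\{\Gamma\le k-1\},\{\Gamma=k\},\{\Gamma>k\}$, and then normalises by $\mathpzc{f}_0(\gamma_k)$. Both lines are standard; the paper's version is slightly more algebraic (the $SR_k$ recursion plus a change of variable), while yours is more probabilistic and makes the role of the geometric memorylessness explicit. One point you should make fully precise in your write-up is that the post-change conditional density $\mathpzc{f}_{1,k}(\cdot\mid\cdot)$ in (\ref{eqn:f1j}) does not depend on the particular value of $i\le k-1$ (only on $z_{k-1},\hat{\bf x}_{k-1|k-1},{\bf e}_{s,k-1}$), so the mixture over $\{\Gamma\le k-1\}$ really does collapse to a single density; the paper's route via $SR_k$ absorbs this implicitly into the product form of (\ref{eqn:SRn_1st}).
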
 
\begin{proof}[Proof]
	Using (\ref{eqn:SRn_1st}), the recursion formula of the Shiryaev statistics $SR_k$ is derived first, see (\ref{eqn:SRn_recurr}). Then using (\ref{eqn:pk_1st}) in (\ref{eqn:SRn_recurr}), the recursion equations of (\ref{eqn:pk_recurr}) are derived.
	\begin{equation}
		SR_k =\begin{cases}\frac{{\mathcal L}_{c,k}}{1-\rho}SR_{k-1}+\frac{{\mathcal L}_{d,k}}{1-\rho}, & s_{k-2} = 0,\\\frac{{\mathcal L}_{a,k}}{1-\rho}SR_{k-1}+\frac{{\mathcal L}_{b,k}}{1-\rho}, & s_{k-2}  = 1.\end{cases}
		\label{eqn:SRn_recurr}
	\end{equation}
\end{proof}
We use the following simplified notations to represent the recursion formula in (\ref{eqn:pk_recurr}). $p_k = \phi_0\left( p_{k-1}\right)$, if $s_{k-2} = 0$, and $p_k = \phi_1\left(p_{k-1} \right)$, otherwise. Also, the initial value of $p_k$ is taken to be 0.
\subsubsection{Solution of optimization problem (\ref{eqn:J_opt_thetak})} \label{subsub:bellman_eq}
The expected value of the per stage cost function $g_k(\cdot)$ in (\ref{eqn:J_opt_thetak}) is derived by taking expectations on both sides of (\ref{eqn:gk}), and using $p_k=\text{E}\left[\mathbbm{1}_{\left\{\theta_k = 1\right\}}|{\Psi}_k\right]$, see (\ref{eqn:E_gk}).
\begin{equation}
	\begin{aligned}
&\text{E}\left[g_k\left(\theta_k,s_k,d_k\right)|{\Psi}_k\right]=p_k\mathbbm{1}_{\left\{d_k =0 \right\}} + \\ & \lambda_f\left(1-p_k\right)\mathbbm{1}_{\left\{d_k =1 \right\}} +\lambda_e\left(1-p_k\right)\mathbbm{1}_{\left\{s_k =1 \right\}}\mathbbm{1}_{\left\{d_k =0 \right\}}.
\end{aligned}
\label{eqn:E_gk}
\end{equation}
The Bellman equation for the infinite horizon cost function (\ref{eqn:J_opt_thetak}) with the termination state $\text{T}_e$ can be formulated using the sufficient statistics $p_k$ as follows,
	\begin{align}
		&J\left(p_k\right) =\min_{{\bf u}_d\subset\mathcal{U}}\left[p_k\mathbbm{1}_{\left\{d_k =0 \right\}} + \lambda_f\left(1-p_k\right)\mathbbm{1}_{\left\{d_k =1 \right\}} \right. \nonumber \\
		& \left. +\lambda_e\left(1-p_k\right)\mathbbm{1}_{\left\{s_k =1 \right\}}\mathbbm{1}_{\left\{d_k =0 \right\}} + B_0\left(p_k\right)\mathbbm{1}_{\left\{s_k =0 \right\}}\mathbbm{1}_{\left\{d_k =0 \right\}} \right. \nonumber \\
		& \left. + B_1\left(p_k\right)\mathbbm{1}_{\left\{s_k =1 \right\}}\mathbbm{1}_{\left\{d_k =0 \right\}}\right],
		\label{eqn:bellman_eqn}
	\end{align}
where $\mathcal{U} = \left\{1,2,3 \right\}$ is the set of all stationary deterministic permissible policies, see Table~\ref{tab:U}. $B_0\left(p_k\right)=\text{E}\left[J\left(\phi_0\left(p_{k} \right)\right)\right]$, and $B_1\left(p_k\right)=\text{E}\left[J\left(\phi_1\left(p_{k} \right)\right)\right]$. {\color{black} Therefore, $B_0\left(p_k\right)$ and $B_1\left(p_k\right)$ denote the expected total costs from $(k+1)$-th time instant till the termination of the process when $s_k=0$ and $s_k =1$, respectively, and $d_k = 0$. Note that, when $d_k = 1$, the process terminates immediately, so there will be no additional cost after the $k$-th time instant.} In addition to that,  if $d_k = 1$ then the process will immediately terminate, and there will be no use of adding watermarking at the $(k+1)$-th time instant, therefore, the combination $(s_k = 1, d_k =1)$ has been ignored. \vspace{2mm})
\begin{table}[h!]
\begin{center}
\caption{$\mathcal{U}$}
\label{tab:U}
\begin{tabular}{c|c|c} 
			\hline \hline
			${\bf u}_{d,k}$ & $s_k$ & $d_k$ \\
			\hline
	         1 & 0 & 0 \\
	         2& 1 & 0 \\
	         3 & 0 & 1 \\
	         	\hline \hline
\end{tabular}
\end{center}
\end{table}

To satisfy the accessibility hypothesis, we have discretized the space of the sufficient statistics $p_k$ into a finite set during the numerical simulations. On the other hand, the control space of the stochastic optimization problem under study is inherently discrete and finite. Finally, the value iteration is used to solve the Bellman equation (\ref{eqn:bellman_eqn}) and to find the optimal policy ${\bf u}_d^*$.
\subsection{Structural properties of the optimal policy} \label{subsec:struc_opt_soln} {\color{black} 
In this subsection, we will study the structure of the optimal solution found by solving the Bellman equation (\ref{eqn:bellman_eqn}). \newline
\textbf{Assumption A.1: } There exist at least one stationary deterministic policy ${\bf u}_d$, for which both the constraints as given in (\ref{eqn:cost_fun}) will be satisfied.

Assumption A.1 is about the feasibility of the existence of a stationary deterministic policy for the optimization problem in (\ref{eqn:cost_fun}). Now the value iteration reveals the following optimal policy for selecting the control variables $s_k$ and $d_k$ values. 
\begin{equation}
	s_k = \begin{cases}0, & B_0\left(p_k\right)-B_1\left(p_k\right)<\lambda_e\left(1-p_k\right), \\1, & otherwise.\end{cases}
	\label{eqn:ths}
\end{equation} {\color{black}
\begin{equation}
	d_k = \begin{cases}0, & p_k+\lambda_e\left(1-p_k\right)\mathbbm{1}_{\left\{s_k =1 \right\}}+B_0\left(p_k\right)\mathbbm{1}_{\left\{s_k =0 \right\}} \\\ &+B_1\left(p_k\right)\mathbbm{1}_{\left\{s_k =1 \right\}}<\lambda_f\left(1-p_k\right) , \\1, & \text{otherwise}.\end{cases}
	\label{eqn:thd}
\end{equation}}

First, we will prove that the optimal policy is going to be a stationary deterministic policy. As stated in Lemma 3.1 from \cite{beutler1985optimal}, the costs FAR and ANW will be monotone and non-increasing in $\lambda_f$ and $\lambda_e$, respectively.  We can prove that by following the similar steps used to prove Lemma 3.1 in \cite{beutler1985optimal}. From the monotone and non-increasing properties of FAR and ANW, it can be proved that the inequality conditions in the original constrained optimization problem (\ref{eqn:cost_fun}) will be satisfied for finite values of $\lambda_f \ge 0$ and $\lambda_e \ge 0$ for some deterministic policy as stated in Lemma 3.3 from \cite{beutler1985optimal}. 

Finally, as discussed in \cite{beutler1985optimal}, under assumption A.1 or the weaker condition of Lemma 3.1, the stationary deterministic optimal policy found by solving the Bellman equation (\ref{eqn:bellman_eqn}) from the unconstrained optimization problem with the Lagrangian multipliers, $\lambda_e$ and $\lambda_f$, will be the solution of the original constrained problem as given in (\ref{eqn:cost_fun}).

Even though a formal proof is unavailable at this point, we have performed extensive numerical simulations and found that for the following properties of the optimal policy: The optimal policy is a two threshold policy, $Th^s$ and $Th^d$, $Th^d \ge Th^s$. Figures~\ref{fig:lsh_rhs_sk} and \ref{fig:lsh_rhs_dk} provide the insights with thresholds of a two-threshold policy by plotting the left-hand sides (LHS) and right-hand sides (RHS) of (\ref{eqn:ths}) and (\ref{eqn:thd}), respectively, for a relatively small $\lambda_e$ and large $\lambda_f$. We observe that for (\ref{eqn:ths}), the LHS crosses the RHS at two points, but the second crossing happens after $d_k =1 $, \ie, the termination of the process, which results in a two-threshold policy. We have found that for a relatively large $\lambda_e$ or $\rho$ near to unity, the optimal policy may even become a one or three-threshold policy, which is similar to the findings of \cite{Banerjee2012}. The following steps can be followed offline to find the two thresholds. 
\begin{description}
	\item[Step 1]: The search space of $\lambda_e$ and $\lambda_f$ is divided into $N$ equally spaced grid points. 
	\item[Step 2]: For each grid point, we perform the value iterations using (\ref{eqn:bellman_eqn}), and store $J^*(p_k)$ where $p_k$ is also discretized in $ [0,1]$.
	\item[Step 3]: For each grid point, ADD, FAR and ANW are evaluated from Monte-Carlo simulations by deriving the decision variables $s_k$ and $d_k$ from (\ref{eqn:ths})-(\ref{eqn:thd}) using $J^*(p_k)$ from Step 2. 
	\item[Step 4]: Select the best $\lambda^*_e$ and $\lambda^*_f$ combination, which gives minimum ADD and satisfies the constraints on FAR and ANW, see (\ref{eqn:cost_fun}). 
	\item[Step 5]: Apply numerical solvers such as the Trust-Region algorithm, the bisection method, etc., to solve the following two equations for $\bar p$, see (\ref{eqn:B0_Th_s}) and (\ref{eqn:B0_Th_d}). The solutions of (\ref{eqn:B0_Th_s}) are $\bar p = Th^s$ and $\bar p = 1$. Equation (\ref{eqn:B0_Th_d}) is derived from (\ref{eqn:thd}) using $s_k = 0$ and solved for $\bar p \in [Th^s, 1]$. The solutions of (\ref{eqn:B0_Th_d}) is $\bar p = Th^d$ and $\bar p = 1$.  
	\begin{align}
		& B_0\left(\bar p\right)-B_1\left(\bar p\right)=\lambda_e\left(1-\bar p\right) \label{eqn:B0_Th_s} \\
		& \bar p+\lambda_e\left(1-\bar p\right)+B_1\left(\bar p\right)=\lambda_f\left(1-\bar p\right) \label{eqn:B0_Th_d}
	\end{align}
	Finally, the optimal policy ${\bf u}^*_d$ is given as
	\begin{equation}
		{\bf u}_{d,k}^*=\begin{cases}1 \text{, \ie, }(s_k=0, d_k = 0) & p_k < Th^s,  \\2 \text{, \ie, } (s_k=1, d_k = 0) & p_k \ge Th^s,  \\3 \text{, \ie, } (s_k=0, d_k = 1) & p_k \ge Th^d.  \end{cases}
		\label{eqn:ud_last}
	\end{equation}
\end{description}

\begin{figure}[h!]
	\centering
	\includegraphics[width=\figwidth]{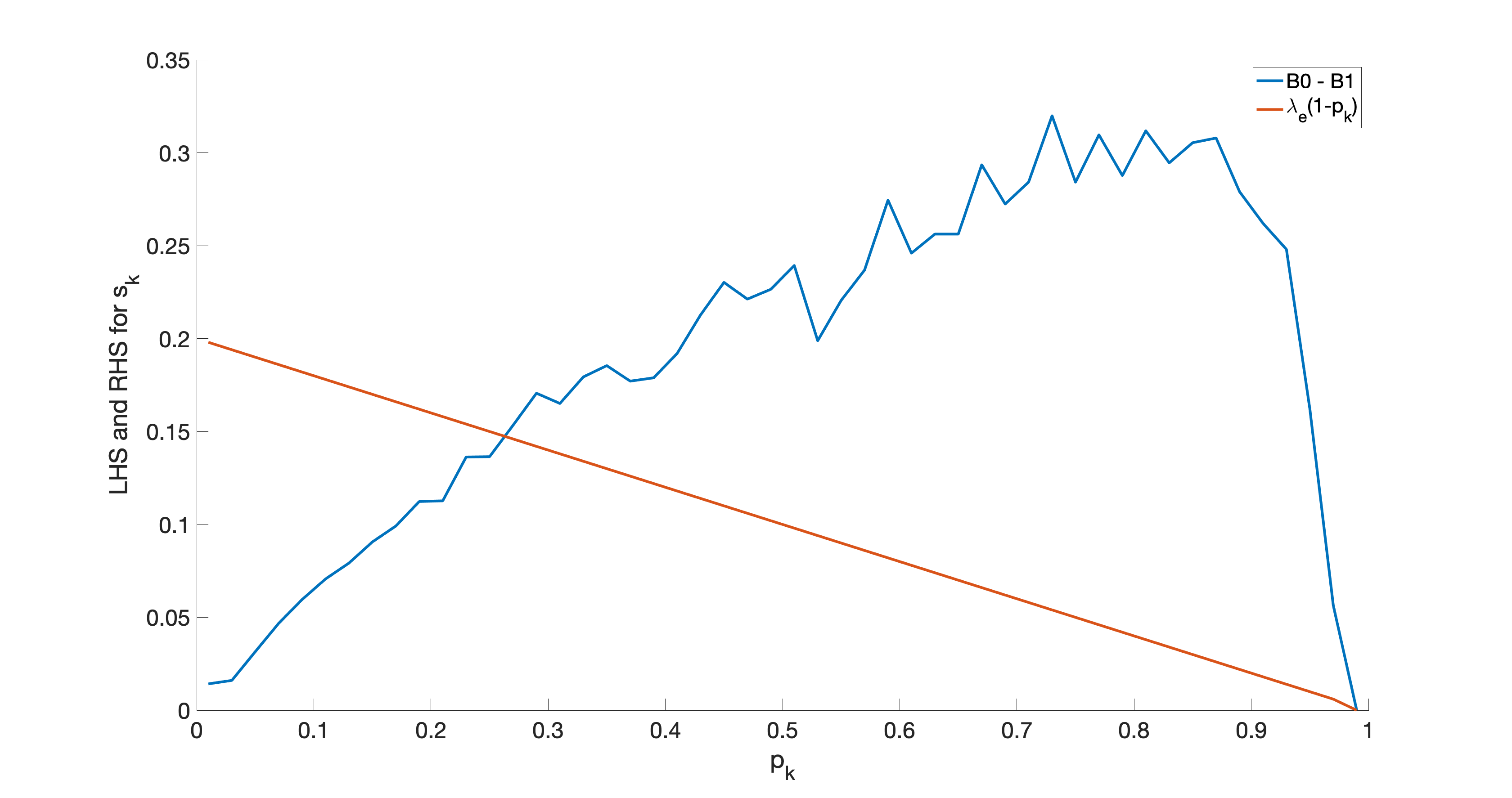}
	\caption{LHS and RHS of (\ref{eqn:ths}) vs. $p_k$ for System-A. $\lambda_e = 0.2$, $\lambda_f = 100$, and $\sigma_{e}^2 = 1.19$. \vspace{5mm}}
	\label{fig:lsh_rhs_sk}
\end{figure}

\begin{figure}[h!]
	\centering
	\includegraphics[width=\figwidth]{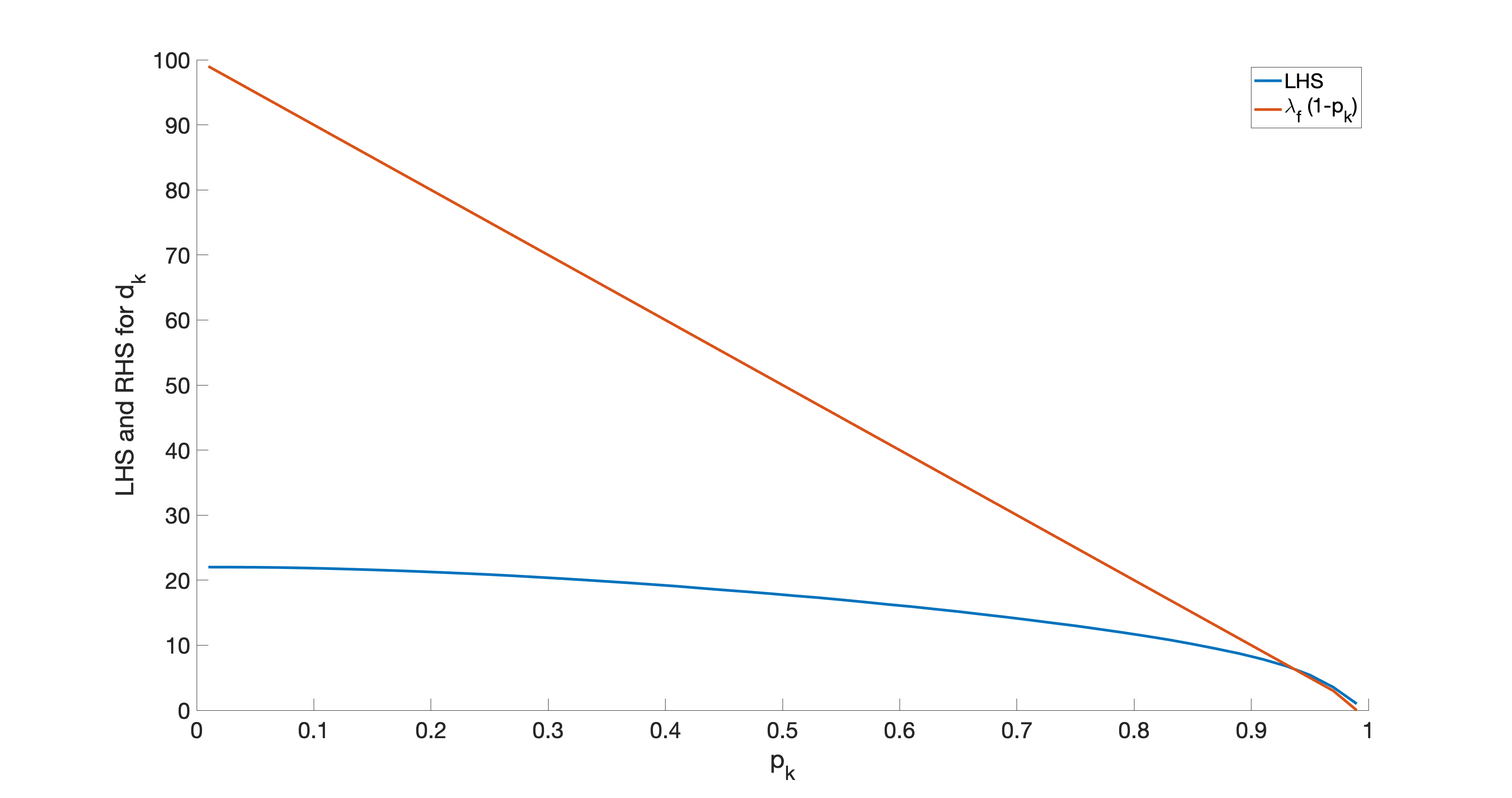}
	\caption{LHS and RHS of (\ref{eqn:thd}) vs. $p_k$ for System-A. $\lambda_e = 0.2$, $\lambda_f = 100$, and $\sigma_{e}^2 = 1.19$.}
	\label{fig:lsh_rhs_dk}
\end{figure}

}
Next, we briefly discuss the computational runtime complexity of the proposed policy. 
\subsection{Computational complexity} \label{subsec:complexity} 
The proposed technique is an online method. At run time, we only need to evaluate $p_k$ (\ref{eqn:pk_recurr}) and compare it with two thresholds at each time step. For our problem formulation, most of the heavy computations, such as matrix inversion and computation of determinants, associated with the evaluation of the likelihood ratio (\ref{eqn:Lj_2nd}) can be derived offline since the variances are fixed, see (\ref{eqn:sigma_1j})-(\ref{eqn:sigma_0j}). The most expensive operations at run-time are a few matrix-vector multiplications with the highest computational complexity of $O(np)$, see (\ref{eqn:mu_1j}) and (\ref{eqn:mu_2j}). 
\section{Derivations of ADD, FAR and $\Delta LQG$} \label{sec:ADD_FAR_LQG}
This section derives the asymptotically approximate analytical expressions of ADD, FAR and $\Delta LQG$ for the given thresholds $Th^s$ and $Th^d$, and a few other parameters to be defined later. 
\subsection{Approximate Expressions of ADD and FAR} \label{subsec:ADD_FAR}
Here we derive the approximate expressions of ADD and FAR, as $Th^d \rightarrow \infty$, applying non-linear renewal theory \cite{siegmund2013sequential,Tartakovsky2005}. First, the Shiryaev statistics $SR_k$ is converted into $LSR_k = \log\left( SR_k\right)$ for the ease of asymptotic analysis. $LSR_k$ can be expressed as a summation of two variables, $S_k$ and $l_k$, as given in the following Lemma~\ref{lemma:LSRn}.
\begin{lemma} \label{lemma:LSRn}
The logarithm of the Shiryaev statistics, $LSR_k$, generated from the test data, \ie, the innovation signal $\gamma_k$ ((\ref{eqn:gamma_vs_e}) and (\ref{eqn:gamma_attack_vs_e})) and the watermarking signal ${\bf e}_{k}$ (\ref{eqn:ek}), under the two threshold policy $Th^s$ and $Th^d$, can be expressed as the summation of two variables $S_k$ and $l_k$, see (\ref{eqn:LSRn_2nd}). Here $S_k$ (\ref{eqn:Sn_2nd}) is a ladder variable, and $l_k$ (\ref{eqn:ln_2nd}) is a slowly changing variable in the sense defined in \cite{siegmund2013sequential}.
	\begin{equation}
		LSR_k = S_k+l_k.
		\label{eqn:LSRn_2nd}
	\end{equation}
	\begin{equation}
	S_k = Z_k+k|\log(1-\rho)|.
	\label{eqn:Sn_2nd}
\end{equation}
\begin{equation}
\begin{aligned}
	&l_k = \log\left(SR_0+\sum_{i=1}^k(1-\rho)^{i-1}\mathcal{L}_{d,i}\exp(-\lambda_i)\mathbbm{1}_{\left\{LSR_i < Th^S \right\}} \right. \\
	& \left. +\sum_{i=1}^k(1-\rho)^{i-1}\mathcal{L}_{b,i}\exp(-\lambda_i)\mathbbm{1}_{\left\{LSR_i \ge Th^S \right\}}\right) \\
	&+\sum_{j=1}^k\log\left(\mathcal{L}_{c,j}\right)\mathbbm{1}_{\left\{LSR_j < Th^S \right\}}-\sum_{j=1}^k\log\left(\mathcal{L}_{a,j}\right)\mathbbm{1}_{\left\{LSR_j < Th^S \right\}},
\end{aligned}
\label{eqn:ln_2nd}
\end{equation}	
where
\begin{align}
Z_k &= \sum_{i=1}^k\log\left(\mathcal{L}_{a,i} \right),  \label{eqn:Zn} \\ 
\lambda_k &=\sum_{i=1}^k\log\left(\mathcal{L}_{a,i} \right)\mathbbm{1}_{\left\{LSR_i \ge Th^S \right\}}  \cr
&+\sum_{i=1}^k\log\left(\mathcal{L}_{c,i} \right)\mathbbm{1}_{\left\{LSR_i < Th^S \right\}} \text{, and} \label{eqn:lambda_n} \\
Th^S &=\log\frac{Th^s}{\rho\left(1-Th^s\right)} \label{eqn:ThS}.
\end{align}
\end{lemma}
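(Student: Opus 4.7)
My plan is to derive the decomposition algebraically from the recursion for $SR_k$ established in Lemma~\ref{lemma:pk_recur} (equation (\ref{eqn:SRn_recurr})), and then verify separately that the residual term $l_k$ satisfies Siegmund's slowly-changing criterion. The starting observation is that under the two-threshold policy, $s_{j-2}=1$ if and only if $p_{j-2} \ge Th^s$, which, using $p_k = SR_k/(SR_k+1/\rho)$ from (\ref{eqn:pk_1st}), is equivalent to $LSR_{j-2}\ge Th^S$ with $Th^S$ as in (\ref{eqn:ThS}). Thus the ``branch'' chosen in (\ref{eqn:SRn_recurr}) at each step is a measurable function of past test statistics, and unfolding the recursion gives a closed-form expression for $SR_k$ as a sum over candidate change points.

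The algebraic steps I would carry out are as follows. First, iterate (\ref{eqn:SRn_recurr}) backward starting from $SR_0$ to obtain
\begin{equation*}
SR_k = (1-\rho)^{-k}\Bigl(SR_0\prod_{j=1}^{k}\mathcal{L}_{*,j} + \sum_{i=1}^{k}(1-\rho)^{i-1}\mathcal{L}_{\dagger,i}\prod_{j=i+1}^{k}\mathcal{L}_{*,j}\Bigr),
\end{equation*}
where $\mathcal{L}_{*,j}\in\{\mathcal{L}_{a,j},\mathcal{L}_{c,j}\}$ and $\mathcal{L}_{\dagger,i}\in\{\mathcal{L}_{b,i},\mathcal{L}_{d,i}\}$ are selected by the indicators $\mathbbm{1}_{\{LSR_{\cdot}\ge Th^S\}}$. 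Next, I would factor out the ``post-attack with always-watermarking'' product $(1-\rho)^{-k}\prod_{j=1}^{k}\mathcal{L}_{a,j}$, since $\log$ of this piece is exactly $S_k=Z_k+k|\log(1-\rho)|$ in (\ref{eqn:Sn_2nd}). What remains after the factorisation is, by construction, $e^{l_k}$: the ratio inside consists of (a) a geometrically weighted sum over candidate change points $i$ with weights $(1-\rho)^{i-1}\mathcal{L}_{d,i}$ or $(1-\rho)^{i-1}\mathcal{L}_{b,i}$, each rescaled by $\exp(-\lambda_i)$ with $\lambda_i$ the cumulative log-likelihood defined in (\ref{eqn:lambda_n}); and (b) the conversion of $\mathcal{L}_{c,j}$-factors into $\mathcal{L}_{a,j}$-factors on the no-watermarking indices, which after taking logarithms produces precisely the two correction sums $\sum_{j=1}^{k}\log(\mathcal{L}_{c,j})\mathbbm{1}_{\{LSR_j<Th^S\}}-\sum_{j=1}^{k}\log(\mathcal{L}_{a,j})\mathbbm{1}_{\{LSR_j<Th^S\}}$ appearing in (\ref{eqn:ln_2nd}). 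Matching terms then yields (\ref{eqn:LSRn_2nd}).

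The remaining task is to verify that $S_k$ qualifies as a ladder (random-walk) variable and $l_k$ is slowly changing in the sense of \cite{siegmund2013sequential}. Since $\log\mathcal{L}_{a,i}$ is a function of the stationary post-attack innovation process (cf.\ (\ref{eqn:gamma_attack_vs_e}) and Lemma~\ref{lemma:SRn}), $Z_k$ is a sum of stationary summands with positive Kullback--Leibler drift, so $S_k$ inherits the linear-drift structure needed for a ladder variable. For the slowly-changing property, the key observation is that under the post-change measure $LSR_k\to\infty$, so the indicator $\mathbbm{1}_{\{LSR_j<Th^S\}}$ is zero for all sufficiently large $j$; thus the two correction sums in (\ref{eqn:ln_2nd}) stabilise almost surely to finite limits. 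The logarithm term is controlled by the geometric factor $(1-\rho)^{i-1}$ together with the $\exp(-\lambda_i)$ normalisation, which together bound the summands uniformly and establish $k^{-1}l_k\to 0$ and the $L_1$-convergence required for Siegmund's framework.

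The main obstacle I anticipate is the bookkeeping in Step~2: the indicators that select between $\mathcal{L}_{a,j}/\mathcal{L}_{c,j}$ (and $\mathcal{L}_{b,i}/\mathcal{L}_{d,i}$) are themselves functions of the whole history, so one must be careful that inserting and removing $\mathcal{L}_{a,j}$ factors on the no-watermarking indices—which is what produces the telescoping structure leading to $e^{l_k}$—is done consistently across \emph{every} candidate change-point term in the sum simultaneously. A subtler point in Step~3 is that the stationarity required for the LLN on $\log\mathcal{L}_{a,i}$ is only an asymptotic property of the post-attack innovation process (Sub-section~\ref{subsec:dymanic_programming} argues it holds as $k\to\infty$), so the ``slowly changing'' verification must be understood in the asymptotic sense $Th^d\to\infty$ used throughout Section~\ref{sec:ADD_FAR_LQG}.
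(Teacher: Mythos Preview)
Your proposal is correct and follows essentially the same route as the paper: unfold the recursion (\ref{eqn:SRn_recurr}) into a closed-form sum, factor out $(1-\rho)^{-k}\prod_{j=1}^k\mathcal{L}_{a,j}$ to isolate $S_k$ (the paper does this via the ``add and subtract $\sum\log\mathcal{L}_{a,i}\mathbbm{1}_{\{LSR_i<Th^S\}}$'' trick applied to $\lambda_k$), and then argue that the residual $l_k$ is slowly changing because $LSR_k\to\infty$ post-attack forces the indicator $\mathbbm{1}_{\{LSR_j<Th^S\}}$ to vanish eventually, so the correction sums stabilise and the log term converges. One minor sharpening: Siegmund's second condition is not ``$L_1$-convergence'' but the oscillation bound $\text{P}\{\max_{1\le i\le k\delta}|l_{k+i}-l_k|>\epsilon\}<\epsilon$ for large $k$, which the paper verifies by splitting $l_k=l_{1,k}+l_{2,k}-l_{3,k}$ and showing each piece freezes once $LSR_k$ exceeds $Th^S$---exactly the stabilisation argument you sketch.
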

\begin{proof}
	The proof of Lemma~\ref{lemma:LSRn} is provided in Appendix~\ref{apdx:LSRn}.
\end{proof}
\begin{remark}
	The threshold $Th^s$ for $p_k$ is equivalent to the threshold $Th^S$ for $LSR_k$. Similarly, we can define the threshold $Th^D$ as 
	\begin{equation}
		Th^D \triangleq \log\frac{Th^d}{\rho\left(1-Th^d\right)} \label{eqn:ThD}
	\end{equation}
	  for $LSR_k$, which is equivalent to the threshold $Th^d$ for $p_k$. Also, as $Th^d \rightarrow 1$,  $Th^D \rightarrow \infty$.  
\end{remark}
Therefore, Lemma~\ref{lemma:LSRn} enables us to apply non-linear renewal theory to derive the approximate expressions of ADD and FAR by splitting the logarithm of the Shiryaev statistics, $LSR_k$, into a ladder variable $S_k$ and a slowly changing term $l_k$. The definition of a slowly changing variable from \cite{siegmund2013sequential} is also provided in Appendix~\ref{apdx:LSRn}. 
We define the variable $r$ to be the overshoot of the ladder variable $S_{n_d}$ over a large threshold $Th^D$ at $k  = n_d$. Therefore, $r_{n_d} \triangleq S_{n_d}-Th^D$ as $Th^D \rightarrow \infty$, and $n_d = \inf\left\{k\ge1:S_k \ge Th^D \right\}$. According to the non-linear renewal theory \cite{siegmund2013sequential}, the overshoot statistics of $LSR_k$ crossing a large threshold $Th^D$ can be approximated as the statistics of $r_{n_d}$, provided $l_k$ is slowly changing and $Th^D \rightarrow \infty$. The approximate expressions of ADD and FAR derived in this paper are stated in Theorem~\ref{th:ADD_FAR}. 
\begin{myth} \label{th:ADD_FAR}
	For the Shiryaev statistics given in Lemma~\ref{lemma:SRn} and the geometric prior distribution of the change point $\Gamma$ (\ref{eqn:Pi}), under the two threshold policy $Th^s$ and $Th^d$, the asymptotic approximate expressions of ADD and FAR as $Th^D \rightarrow \infty$ will take the following forms, provided the conditions C1-C4 are satisfied. 
\begin{align}
		&\textbf{Conditions:} \nonumber \\
		&\text{C1: } \left\{Z_k: k\ge1 \right\} \text{is nonarithmetic with respect to } \text{P}_0 \text{ and } \text{P}_1. \nonumber \\
		&\text{C2: } \text{E}_1\left[\mid Z_1 \mid ^2 \right] \text{ is finite}. \nonumber \\
		&\text{C3: } l_k \text{ (\ref{eqn:ln_2nd}) is a slowly changing variable in the} \nonumber \\
			&\text{sense defined in \cite{siegmund2013sequential}}. \nonumber \\
		&\text{C4: } 0< \text{E}_1\left[\text{D}\left(\mathpzc{f}^e_{1},\mathpzc{f}_{0}\right)\right] < \infty \text{, and } 0< \text{E}_0\left[\text{D}\left(\mathpzc{f}_{0},\mathpzc{f}^e_{1}\right)\right]	< \infty. \nonumber
	\end{align}
	Then, 
	\begin{equation}
		ADD = \frac{Th^D +{\bar r}-{\bar l}}{\text{E}_1\left[\text{D}\left(\mathpzc{f}^e_{1},\mathpzc{f}_{0}\right) \right]+|\log(1-\rho)|} + o(1),
		\label{eqn:ADD_2nd}
	\end{equation}
\begin{equation}
	\text{and } FAR \approx \frac{\xi}{\rho \exp\left(Th^D\right)}\left( 1+ o(1)\right),  \text{ as } Th^D \rightarrow \infty,
	\label{eqn:FAR}
\end{equation}
where
\begin{align}
	&{\bar r}= \lim_{n_d \rightarrow \infty}\text{E}_1\left[r_{n_d} \right], \label{eqn:rbar} \\
	&{\bar l}= \lim_{k \rightarrow \infty }\text{E}_1\left[l_k \right], \label{eqn:lbar} \\
	&\xi =\lim_{n_d \rightarrow \infty }\text{E}_1 \left[\exp\left(-r_{n_d}\right) \right]. \label{eqn:Xi}
\end{align}
$\text{P}_0$ and $\text{P}_1$ denote the probability measures before and after the attack, respectively. $\text{E}_0$ and $\text{E}_1$ denote the expectations with respect to the probability measures $\text{P}_0$ and $\text{P}_1$, respectively. $\text{E}_1\left[\text{D}\left(\mathpzc{f}^e_{1},\mathpzc{f}_{0}\right) \right]$ is the expected KLD between the distributions $\mathpzc{f}^e_{1,j}\left(\cdot|\cdot \right)$ and $\mathpzc{f}_{2,j}\left(\cdot|\cdot \right)$, and $\mathpzc{f}^e_{1,j}\left(\cdot|\cdot \right) = \mathpzc{f}_{1,j}\left(\cdot|\cdot \right)$ when $s_{j}=1$ for all $j$. Here, the expectation is taken over the joint distribution of the innovation signal and the watermarking signal after the attack start point. Similarly, $\text{E}_0\left[\text{D}\left(\mathpzc{f}_{0},\mathpzc{f}^e_{1}\right)\right]$ is the expected KLD between $\mathpzc{f}_{0}$ and $\mathpzc{f}^e_{1}$, and the expectation is taken over the joint distribution of the innovation signal and the watermarking signal before the attack start point. 
\end{myth}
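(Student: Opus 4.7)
The plan is to invoke the non-linear renewal theory of Woodroofe and Siegmund \cite{siegmund2013sequential}, adapted to Bayesian quickest change detection along the lines of \cite{Tartakovsky2017}, working directly from the decomposition $LSR_k = S_k + l_k$ established in Lemma~\ref{lemma:LSRn}. Because $l_k$ is slowly changing by condition C3, the stopping rule $\tau = \inf\{k : LSR_k \ge Th^D\}$ is asymptotically equivalent, as $Th^D \to \infty$, to the first-passage time $n_d = \inf\{k : S_k \ge Th^D - l_k\}$ of the random walk $S_k = Z_k + k|\log(1-\rho)|$ over an effective threshold $Th^D - \bar l$, and the overshoot of $LSR_\tau$ above $Th^D$ converges in distribution to the renewal-theoretic limiting overshoot $r_{n_d}$ of $S_k$, whose law is determined by classical renewal theory once C1 (nonarithmeticity) and C2 (finite second moment) are in force.

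For ADD, I would apply Wald's identity to $S_k$. Under $\text{P}_1$ the stationary increments of $Z_k$ are $\log \mathcal{L}_{a,k}$, whose expectation equals the KLD $\text{E}_1[\text{D}(\mathpzc{f}^e_1, \mathpzc{f}_0)]$ by ergodicity of the joint post-attack Kalman-innovation process, with integrability supplied by C4. Hence $\text{E}_1[S_{n_d}] = \bigl(\text{E}_1[\text{D}(\mathpzc{f}^e_1, \mathpzc{f}_0)] + |\log(1-\rho)|\bigr)\,\text{E}_1[n_d]$. Combining this with the renewal identity $\text{E}_1[S_{n_d}] = (Th^D - \bar l) + \bar r + o(1)$ and the slowly changing property $\text{E}_1[\tau - \Gamma] = \text{E}_1[n_d] + o(1)$, solving for the detection delay yields the expression (\ref{eqn:ADD_2nd}).

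For FAR, I would exploit the likelihood-ratio interpretation of $SR_k$ via a change of measure from $\text{P}_1$ to the no-change measure $\text{P}_\infty$. Writing $\text{P}^\Pi\{\tau < \Gamma\}$ as a sum over the geometric prior and expressing the indicator of $\{\tau < \Gamma\}$ through $\exp(-LSR_\tau)$, the leading-order contribution as $Th^D \to \infty$ factorises into $\exp(-Th^D)\,\text{E}_1[\exp(-r_{n_d})]$; the prefactor $1/\rho$ arises from the geometric prior normalisation, delivering the approximation (\ref{eqn:FAR}) with $\xi$ as defined in (\ref{eqn:Xi}).

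The principal technical obstacle will be rigorously verifying condition C3, i.e.\ that $l_k$ in (\ref{eqn:ln_2nd}) is slowly changing in the sense of \cite{siegmund2013sequential}, because the indicator terms $\mathbbm{1}_{\{LSR_i \ge Th^S\}}$ encode state-dependent switching between two likelihood regimes induced by the watermarking threshold $Th^S$, so $l_k$ is not a pure additive residual of the classical type and a careful coupling argument will be needed to control its fluctuations. A secondary hurdle is establishing the strong law of large numbers for $\log \mathcal{L}_{a,k}$ with the claimed KLD rate; this needs a mixing argument for the joint Markov process $(\mathbf{z}_k, \hat{\mathbf{x}}_{k|k}, \mathbf{e}_{s,k})$, which I expect to handle by combining asymptotic stability of the steady-state Kalman filter with the stationarity imposed by the attacker's fixed choice of $\mathbf{A}_a$ and $\mathbf{Q}_a$.
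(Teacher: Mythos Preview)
Your proposal is correct and follows essentially the same route as the paper: the decomposition $LSR_k = S_k + l_k$ from Lemma~\ref{lemma:LSRn}, Wald's identity applied to the drifted walk $S_k$ to extract $\text{E}_1[T_D]$, and nonlinear renewal theory to replace the overshoot of $LSR_\tau$ by the limiting overshoot $r_{n_d}$ of $S_k$. Two minor remarks: the paper derives FAR more directly from the identity $FAR = \text{E}^{\Pi}[1-p_\tau] = \text{E}^{\Pi}\bigl[(1+\rho\exp(LSR_\tau))^{-1}\bigr]$ rather than an explicit change of measure, and the slowly-changing property C3 that you flag as the main obstacle is in fact already established in the proof of Lemma~\ref{lemma:LSRn} (Appendix~\ref{apdx:LSRn}), so you need not redo it.
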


\begin{proof}
	The proof of Theorem~\ref{th:ADD_FAR} is provided in Appendix~\ref{apdx:ADD_FAR}.
\end{proof}
\begin{remark} \label{rem:kld}
 As given in Lemma~1 and Theorem~2 of \cite{watermarking_tac}, $\text{E}_1\left[\text{D}\left(\mathpzc{f}^e_{1},\mathpzc{f}_{0}\right) \right]$ will take the following form,
\begin{align}
	\text{E}_1\left[\text{D}\left(\mathpzc{f}^e_{1},\mathpzc{f}_{0}\right) \right]  
	=\frac{1}{2}\left\{tr\left({\bf \Sigma}_0^{-1} {\bf \Sigma}_{\widetilde \gamma}\right) -m - \log\frac{\mid{\bf Q}_a\mid}{\mid{\bf \Sigma}_{0}\mid}\right\},
	\label{eqn:opt_kld}
\end{align}
where the covariance matrix ${\bf \Sigma}_{\widetilde \gamma}$ of the innovation signal after the attack start point is given as 
\begin{align}
	{\bf \Sigma}_{\widetilde \gamma}&={\bf E}_{zz}(0)-{\bf C}({\bf A}+{\bf B{\bf L)}}{\bf E}_{xz}(-1) \cr
	&-\left[{\bf C}({\bf A}+{\bf B{\bf L)}}{\bf E}_{xz}(-1) \right]^T+{\bf C}{\bf B}{\bf \Sigma}_e{\bf B}^T{\bf C}^T  \cr
	&+{\bf C}({\bf A}+{\bf B}{\bf L}){\bf \Sigma}_{x^Fz}({\bf A}+{\bf B}{\bf L})^T{\bf C}^T \cr
	&+{\bf C}({\bf A}+{\bf B}{\bf L}){\bf \Sigma}_{x^Fe}({\bf A}+{\bf B}{\bf L})^T{\bf C}^T,
	\label{eqn:sigma_gamma_attack} \\
	&\text{where }{\bf E}_{xz}(-1)= \sum_{i=0}^\infty\mathcal{A}^{i}{\bf K}{\bf A}_a^{i+1}{\bf E}_{zz}\left(0\right) \label{eqn:Exz1_original}
\end{align}
and ${\bf E}_{zz}(0) = \text{E}\left[{\bf z}_k {\bf z}_k^T\right]$. ${\bf \Sigma}_{x^Fz}$ and ${\bf \Sigma}_{x^Fe}$ are the solutions to the following Lyapunov equations,
\begin{align}
	&\mathcal{A}{\bf \Sigma}_{x^Fz}\mathcal{A}^T-{\bf \Sigma}_{x^Fz}+{\bf K}{\bf E}_{zz}(0){\bf K}^T+\mathcal{A}{\bf E}_{xz}(-1){\bf K}^T \cr
	&+\left(\mathcal{A}{\bf E}_{xz}(-1){\bf K}^T\right)^T = 0 \text{, and} \label{eqn:ExFxF_th1p1} \ \\
	&\mathcal{A}{\bf \Sigma}_{x^Fe}\mathcal{A}^T-{\bf \Sigma}_{x^Fe}+\left({\bf I}_n-{\bf K}{\bf C}\right){\bf B}{\bf \Sigma}_e{\bf B}^T\left({\bf I}_n-{\bf K}{\bf C}\right)^T = 0. \cr
	\label{eqn:ExFxF_th1p2}
\end{align}
Here $\mathcal{A} =\left({\bf I}_n-{\bf K}{\bf C}\right)\left({\bf A}+{\bf B}{\bf L}\right)$, which is assumed to be strictly stable. ${\bf I}_n$ is an identity matrix of size $n \times n$.
\end{remark}
Therefore, we can derive approximate values of ADD and FAR using Theorem~\ref{th:ADD_FAR} for the given thresholds, $Th^d$ and $Th^s$, and the system and noise parameters. The denominator of (\ref{eqn:ADD_2nd}) does not depend on the thresholds. Also, according to the renewal theory, the statistics obtained from the overshoot $r_{n_d}$, \ie, $\bar r$ and $\xi$, are not dependent on the exact values of the thresholds as long as  $Th^d$ is large enough. However, from (\ref{eqn:ln_2nd}), we can say that $\bar l$ is dependent on the threshold $Th^s$.
Further approximation of the expression of ADD (\ref{eqn:ADD_2nd}) can be directly obtained from Theorem~\ref{th:ADD_FAR} as stated in Corollary~\ref{corr:ADD_1st}. 
\begin{corollary} \label{corr:ADD_1st}
	The approximate expression of ADD as provided in Theorem~\ref{th:ADD_FAR} can be further simplified as follows,
	\begin{equation}
		ADD \approx \frac{Th^D}{\text{E}_1\left[\text{D}\left(\mathpzc{f}^e_{1},\mathpzc{f}_{0}\right) \right]+|\log(1-\rho)|} \text{, as } Th^D \rightarrow \infty
		\label{eqn:ADD_1st}
		\end{equation}
\end{corollary}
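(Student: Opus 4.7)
The plan is to obtain Corollary~\ref{corr:ADD_1st} as a direct asymptotic simplification of the expression \eqref{eqn:ADD_2nd} already established in Theorem~\ref{th:ADD_FAR}, by showing that every term in the numerator of \eqref{eqn:ADD_2nd} other than $Th^D$ is negligible in the limit $Th^D\to\infty$.

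First, I would argue that $\bar r$ is a finite constant independent of $Th^D$. By its definition $\bar r=\lim_{n_d\to\infty}\text{E}_1[r_{n_d}]$, where $r_{n_d}=S_{n_d}-Th^D$ is the overshoot of the ladder variable $S_k=Z_k+k|\log(1-\rho)|$ across the level $Th^D$. Under conditions C1 and C2 of Theorem~\ref{th:ADD_FAR}, classical renewal theory (as invoked in the theorem's proof) guarantees that the limiting overshoot distribution exists and has a finite first moment, so $\bar r<\infty$ and does not grow with $Th^D$.

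Next, I would argue that $\bar l=\lim_{k\to\infty}\text{E}_1[l_k]$ is also a finite constant. By condition C3 of Theorem~\ref{th:ADD_FAR}, $l_k$ defined in \eqref{eqn:ln_2nd} is a slowly changing variable in the sense of \cite{siegmund2013sequential}, and the nonlinear renewal framework used to prove Theorem~\ref{th:ADD_FAR} already established that $\bar l$ is well-defined and finite, depending on $Th^s$ but not on $Th^D$. Hence both $\bar r$ and $\bar l$ contribute an $O(1)$ term to the numerator of \eqref{eqn:ADD_2nd}.

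Finally, I would combine these observations. Dividing both sides of \eqref{eqn:ADD_2nd} by $Th^D$ gives
\begin{equation*}
\frac{ADD}{Th^D}=\frac{1+(\bar r-\bar l)/Th^D}{\text{E}_1[\text{D}(\mathpzc{f}^e_{1},\mathpzc{f}_{0})]+|\log(1-\rho)|}+\frac{o(1)}{Th^D},
\end{equation*}
and since $\bar r$, $\bar l$ are $O(1)$ while the denominator is bounded away from zero by condition C4, the correction $(\bar r-\bar l)/Th^D\to 0$ as $Th^D\to\infty$. This yields the asymptotic form \eqref{eqn:ADD_1st}. There is no real obstacle here; the only subtle point is to make explicit that $\bar r$ and $\bar l$ are genuinely $O(1)$ as $Th^D$ grows (as opposed to hiding a $Th^D$-dependence through $n_d$), and this is guaranteed by the renewal-theoretic arguments already invoked in Theorem~\ref{th:ADD_FAR}.
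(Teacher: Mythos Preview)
Your proposal is correct and follows essentially the same approach as the paper: the paper's proof simply observes that $\bar r\ll Th^D$ and $\bar l\ll Th^D$ as $Th^D\to\infty$, then drops these terms from \eqref{eqn:ADD_2nd} to obtain \eqref{eqn:ADD_1st}. Your version is a more carefully articulated rendition of the same argument, making explicit why $\bar r$ and $\bar l$ are $O(1)$ rather than merely asserting it.
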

\begin{proof}
${\bar r} \ll Th^D$ and ${\bar l} \ll Th^D$, since $Th^D \rightarrow \infty$. Therefore, by ignoring ${\bar r}$ and ${\bar l}$ from (\ref{eqn:ADD_2nd}), we get (\ref{eqn:ADD_1st}).
\end{proof}
The approximate expression of ADD as provided in Corollary~\ref{corr:ADD_1st} does not depend on the threshold $Th^s$. Therefore, Corollary~\ref{corr:ADD_1st} can also be used to find a suitable value of the threshold $Th^d$ for a given ADD.
\begin{remark}
	Finding analytical expressions for ${\bar r}$, ${\bar l}$, and ${\xi}$ is difficult for the system under consideration. Therefore, we estimate their values by Monte-Carlo (MC) simulation. The values of ${\bar r}$, ${\bar l}$, and ${\xi}$ are not directly dependent on $Th^D$ as long as $Th^D$ is very large, but they depend on $Th^S$. However, to derive the ADD using the second approximate expression as given in Corollary~\ref{corr:ADD_1st}, we do not need the values of ${\bar r}$, ${\bar l}$, and ${\xi}$, but it is less accurate compared to Theorem~\ref{th:ADD_FAR}. 
\end{remark}
\begin{remark}
In order to use the quickest detection scheme, the pre and post-change pdfs must be known. To achieve that, we need to know ${\bf A}_a$ and ${\bf Q}_a$. In practice, it is highly likely that the attacker's system parameters ${\bf A}_a$ and ${\bf Q}_a$ may not be known a priori. In such a case, the attacker's system parameters ${\bf A}_a$ and ${\bf Q}_a$ can be estimated online from the received observations (true or fake) by fitting a vector autoregressive model to the observations \cite{akaike1969fitting}. This estimator will operate in parallel with the attack detection algorithm. Such a parameter estimation scheme will operate before and after the attack. However, before the attack, the estimates of ${\bf A}_a$ and ${\bf Q}_a$ will represent the healthy plant model. We have conducted some preliminary studies using a MISO system where our attack detection algorithm can perform with estimated parameters, albeit with addtional watermarking compared to the 
known parameter case. A  detailed analysis of such a joint estimation and detection scheme is however, beyond the scope of the current manuscript, and interested readers are referred to \cite{xie2021sequential}. Additionally, we also comment that under a replay attack, ${\bf A}_a$ and ${\bf Q}_a$ can be derived from the normal system model as discussed in \cite{naha_replay_attack}.
\end{remark}
\subsection{Approximate Expression of ANW and $\Delta LQG$} \label{subsec:deltaLQG}
Following similar steps as in \cite{Banerjee2012}, the ANW can be approximated as follows,
\begin{equation}
	\begin{aligned}
	ANW \approx &\frac{\text{E}_0\left[t_1\left(Th^S\right) \right]}{\text{E}_0\left[t_1\left(Th^S\right) \right]+\text{E}_0\left[t_2\left(LSR_{Th^S},Th^S\right) \right]} \\
	& \times \text{P}\left\{t\left(Th^S\right) < \Gamma \right\}.
	\end{aligned}
\label{eqn:ANW}
\end{equation}
Here, $t_1\left(Th^S\right)$ denotes the time interval between the time instances when $LSR_k$ starts from $Th^S$, and then crosses the threshold $Th^S$ from above. $t_2\left(LSR_{Th^S},Th^S\right)$ denotes the time interval between the time instances when $LSR_k$ starts from $LSR_{Th^S}$ and crosses the threshold $Th^S$ from below. $t\left(Th^S\right)$ is the first time $LSR_k$ crosses the threshold $Th^S$ from below. An example plot of $LSR_k$ is shown in Fig.~\ref{fig:timing_diagram} to illustrate the variables $t_1(\cdot)$, $t_2(\cdot)$, and $t(\cdot)$. $\text{E}_0[\cdot]$ denotes the expectation with respect to the probability measure before the attack. Deriving analytical expressions for the expectations and the probability values in (\ref{eqn:ANW}) is difficult. Therefore, we perform MC simulation to estimate the ANW for the given thresholds $Th^S$ and $Th^D$. The relationship between the ANW and the increase in the control cost is given in the following theorem. 
\begin{figure}[h!]
	\centering
	\includegraphics[width=\figwidth]{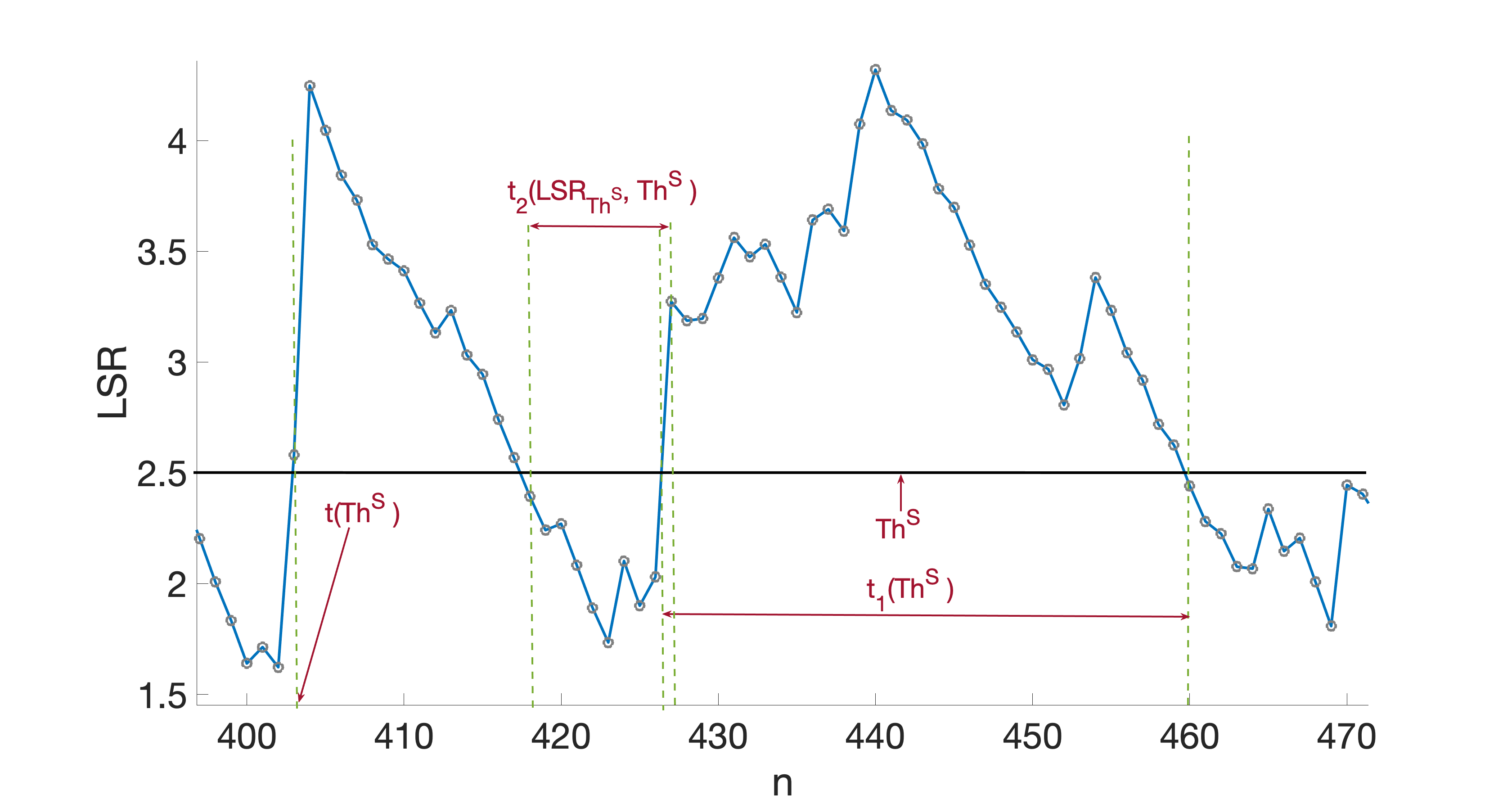}
	\caption{An example plot of $LSR$ vs. time index $n$}
	\label{fig:timing_diagram}
\end{figure}

\begin{myth} \label{th:DeltaLQG}
	For the parsimonious watermarking scheme adopted in this paper, the increase in the LQG control cost, $\Delta LQG$, is related to ANW as
	\begin{equation}
		\Delta LQG = \rho ANW tr\left(H\Sigma_e \right),
		\label{eqn:DeltaLQG} 
	\end{equation}
where
\begin{equation}
	{\bf H} = {\bf B}^T{\bf \Sigma}_L{\bf B}+{\bf U}
	\label{eqn:H}
\end{equation}
and ${\bf \Sigma}_L$ is the solution to the following Lyapunov equation.
\begin{align}
	\left({\bf A}+{\bf B}{\bf L}\right)^T{\bf \Sigma}_L\left({\bf A}+{\bf B}{\bf L}\right)-{\bf \Sigma}_L+{\bf L}^T{\bf U}{\bf L} +{\bf W}=0.
	\label{eqn:Sigma_L}
\end{align}
\end{myth}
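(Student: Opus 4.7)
The plan is to decompose the closed-loop trajectory into a nominal (no-watermarking) component and a perturbation driven purely by the sequence $\{s_{k-1}{\bf e}_k\}$, express the resulting excess per-stage cost as a trace involving the stationary covariance ${\bf \Sigma}_{\Delta {\bf x}}=\mathrm{E}[\Delta {\bf x}_k \Delta {\bf x}_k^T]$ of the perturbation, eliminate ${\bf \Sigma}_{\Delta {\bf x}}$ using the dual Lyapunov equation (\ref{eqn:Sigma_L}) satisfied by ${\bf \Sigma}_L$, and finally translate the stationary injection rate into $\rho\,ANW$ via the geometric prior on $\Gamma$.

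First, I would invoke the LQG separation principle. Because $s_{k-1}{\bf e}_k$ is a known additive control input and the Kalman filter uses the actually applied control in its time update, the watermark leaves the estimation error unaffected. Consequently the state perturbation $\Delta {\bf x}_k={\bf x}_k^w-{\bf x}_k^0$ coincides with the corresponding estimator perturbation and satisfies the closed-loop recursion
\begin{equation*}
\Delta {\bf x}_{k+1}=({\bf A}+{\bf B}{\bf L})\,\Delta {\bf x}_k + {\bf B}\,s_{k-1}{\bf e}_k, \quad \Delta {\bf u}_k={\bf L}\,\Delta {\bf x}_k + s_{k-1}{\bf e}_k.
\end{equation*}
Expanding $({\bf x}_k^w)^T{\bf W}{\bf x}_k^w+({\bf u}_k^w)^T{\bf U}{\bf u}_k^w$ around the nominal trajectory and using that ${\bf e}_k$ is zero-mean and causally independent of $s_{k-1}$, of the nominal trajectory, and of all past watermarks makes every cross term vanish, leaving
\begin{equation*}
\Delta LQG = \mathrm{tr}\bigl(({\bf W}+{\bf L}^T{\bf U}{\bf L})\,{\bf \Sigma}_{\Delta {\bf x}}\bigr) + \bar{s}\,\mathrm{tr}({\bf U}{\bf \Sigma}_e),
\end{equation*}
where $\bar{s}=\mathrm{E}[s_{k-1}]$ denotes the stationary per-step watermarking probability.

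Second, taking covariance on both sides of the $\Delta {\bf x}$-recursion yields the discrete Lyapunov equation ${\bf \Sigma}_{\Delta {\bf x}}=({\bf A}+{\bf B}{\bf L})\,{\bf \Sigma}_{\Delta {\bf x}}\,({\bf A}+{\bf B}{\bf L})^T+\bar{s}\,{\bf B}{\bf \Sigma}_e {\bf B}^T$. Contracting this with ${\bf \Sigma}_L$ and using (\ref{eqn:Sigma_L}) yields the trace identity $\mathrm{tr}\bigl(({\bf W}+{\bf L}^T{\bf U}{\bf L})\,{\bf \Sigma}_{\Delta {\bf x}}\bigr)=\bar{s}\,\mathrm{tr}({\bf \Sigma}_L{\bf B}{\bf \Sigma}_e{\bf B}^T)$, which collapses the expression to $\Delta LQG=\bar{s}\,\mathrm{tr}\bigl(({\bf B}^T{\bf \Sigma}_L{\bf B}+{\bf U}){\bf \Sigma}_e\bigr)=\bar{s}\,\mathrm{tr}({\bf H}{\bf \Sigma}_e)$.

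Finally, under $\Gamma\sim\mathrm{Geom}(\rho)$ the expected pre-attack horizon is $\mathrm{E}[\Gamma-1]=(1-\rho)/\rho$. Since the defender operates in the low-FAR regime, $\tau\ge\Gamma$ holds with high probability, so $\min(\tau,\Gamma-1)\approx\Gamma-1$ and $ANW\approx\bar{s}\,\mathrm{E}[\Gamma-1]$, giving $\bar{s}\approx\rho\,ANW/(1-\rho)\approx\rho\,ANW$ for realistic $\rho\ll 1$. Substituting yields (\ref{eqn:DeltaLQG}). The main obstacle is the stationarity argument required in the second step: the decisions $\{s_k\}$ are produced by the non-stationary posterior Markov chain $\{p_k\}$ under the pre-attack measure, so one must appeal to the two-threshold structure and ergodicity of $\{p_k\}$ to ensure a well-defined $\bar{s}$; once that is granted, the remaining steps are standard discrete-time Lyapunov algebra.
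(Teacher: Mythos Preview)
Your argument is correct and follows the same route as the paper: both treat the intermittent watermark as an always-present signal ${\bf e}_{s,k}=s_{k-1}{\bf e}_k$ with effective covariance $\bar s\,{\bf \Sigma}_e$, obtain $\Delta LQG=\bar s\,\mathrm{tr}({\bf H}{\bf \Sigma}_e)$, and then identify $\bar s\approx\rho\,ANW$ from the geometric prior on $\Gamma$. The only cosmetic differences are that the paper invokes an external result (Theorem~3 of \cite{watermarking_tac}) for the identity $\Delta LQG=\mathrm{tr}({\bf H}{\bf \Sigma}_{e_s})$ instead of re-deriving it via your perturbation/Lyapunov-duality computation, and it writes $\bar s=ANW/\mathrm{E}[\Gamma]=\rho\,ANW$ directly rather than your $ANW/\mathrm{E}[\Gamma-1]\approx\rho\,ANW$.
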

\begin{proof}
	The proof of Theorem~\ref{th:DeltaLQG} is provided in Appendix~\ref{apdx:DeltaLQG}.
\end{proof}
Theorem~\ref{th:DeltaLQG} shows that $\Delta LQG$ is proportional to ANW and a linear function of the watermarking signal variance $\Sigma_e$. If watermarking is added at all the time instants, $\rho ANW$ will become unity, and Theorem~\ref{th:DeltaLQG} will coincide with the special case of always present watermarking as stated in Theorem~3 in \cite{watermarking_tac}.
\subsection{Comparative Analysis} \label{subsec:comp_analysis} 
The proposed method is compared with the following two methods, PW-$\Sigma_e$: persistent watermarking with fixed watermarking power and PW-$\Delta LQG$: persistent watermarking with fixed $\Delta LQG$. The only difference between the proposed method and PW-$\Sigma_e$ is that the watermarking is always present for the latter, and the watermarking power for both the methods is ${\bf \Sigma}_e$. On the other hand, the only difference between the proposed method and PW-$\Delta LQG$ is that the watermarking is always present for the latter, and the $\Delta LQG$ value is the same for both. The subscripts $P$, $A$ and $B$ denote the proposed method, PW-$\Sigma_e$, and PW-$\Delta LQG$, respectively.
\subsubsection{Comparison with PW-$\Sigma_e$} \label{subsubsec:methodA}
\begin{clm}
	The proposed optimal watermarking policy incurs a lesser increase in LQG cost compared to PW-$\Sigma_e$. 
	\label{clm:LQG_A}
\end{clm}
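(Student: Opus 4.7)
The plan is to read off $\Delta LQG$ for both schemes directly from Theorem~\ref{th:DeltaLQG} and compare them. For the proposed policy,
\[
\Delta LQG_P = \rho \cdot ANW_P \cdot \text{tr}\left({\bf H}{\bf \Sigma}_e\right),
\]
where $ANW_P$ is the average number of watermarking events (\ref{eqn:ANE_def}) generated by the two-threshold policy (\ref{eqn:ud_last}). For PW-$\Sigma_e$ the watermark is injected at every instant with covariance ${\bf \Sigma}_e$; invoking the remark immediately following Theorem~\ref{th:DeltaLQG} (the persistent-watermarking case corresponds to $\rho \cdot ANW = 1$, reducing the expression to Theorem~3 of \cite{watermarking_tac}), this gives
\[
\Delta LQG_A = \text{tr}\left({\bf H}{\bf \Sigma}_e\right).
\]

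The second step is to establish $\rho \cdot ANW_P \le 1$, with strict inequality whenever $Th^s > 0$. From the definition (\ref{eqn:ANE_def}), $ANW_P = \text{E}^{\Pi}\!\left[\sum_{i=1}^{\min(\tau,\Gamma-1)} s_i\right]$ with $s_i \in \{0,1\}$, and by (\ref{eqn:ud_last}) $s_i = 0$ whenever $p_i < Th^s$. Under Assumption A.1 with a strictly positive Lagrangian $\lambda_e > 0$, the structural analysis of Section~\ref{subsec:struc_opt_soln} ensures $Th^s > 0$, so with positive probability the policy refrains from watermarking at some pre-attack step. Hence $ANW_P$ is strictly smaller than the corresponding quantity obtained by setting $s_i \equiv 1$, i.e., $\rho \cdot ANW_P < \rho \cdot ANW_A = 1$.

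Chaining the two steps gives $\Delta LQG_P < \Delta LQG_A$, which proves Claim~\ref{clm:LQG_A}. The main obstacle is the second step: the two schemes stop at different random times $\tau_P$ and $\tau_A$, so a clean sample-path comparison of $ANW$ is not available, and one must instead rely on the normalisation $\rho \cdot ANW_A = 1$ established via Theorem~\ref{th:DeltaLQG} together with the known always-on result in \cite{watermarking_tac}. Once that identity is accepted, the remainder is a short inequality that uses only the non-triviality of the parsimonious threshold $Th^s$ and the linearity of $\Delta LQG$ in the average watermarking rate.
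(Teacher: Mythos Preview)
Your argument is correct and follows essentially the same route as the paper. The paper also reads off $\Delta LQG_P = \rho\,ANW\,\text{tr}({\bf H}{\bf \Sigma}_e)$ from Theorem~\ref{th:DeltaLQG} and $\Delta LQG_A = \text{tr}({\bf H}{\bf \Sigma}_e)$ from Theorem~3 of \cite{watermarking_tac}, then forms the difference $\widetilde{\Delta LQG} = (1-\rho\,ANW)\Delta LQG_A$ and invokes $\rho\,ANW < 1$ to conclude. You supply more justification for the inequality $\rho\,ANW_P < 1$ (via $Th^s>0$ and the definition~(\ref{eqn:ANE_def})) than the paper, which simply asserts it; otherwise the two arguments coincide.
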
 
The increase in the LQG control cost for PW-$\Sigma_e$ is as follows, see Theorem~3 from \cite{watermarking_tac},
	\begin{equation}
	\Delta LQG_A =  tr\left(H\Sigma_e \right).
	\label{eqn:DeltaLQGA} 
\end{equation}
By comparing the increase in the LQG control cost between the two methods, we can write
\begin{equation}
	\widetilde {\Delta LQG} = \Delta LQG_A - \Delta LQG_P = \left(1-\rho ANW\right)\Delta LQG_A,
	\label{eqn:diff_deltaLQG}
\end{equation}
where $\Delta LQG_P$ denotes the increase in the LQG control cost for the proposed method. Since, $\left(1-\rho ANW\right) < 1$, we can make Claim~\ref{clm:LQG_A}.

\begin{clm}
	The increase in ADD for the proposed optimal policy with respect to PW-$\Sigma_e$ will be small .  
	\label{clm:ADD_A}
\end{clm}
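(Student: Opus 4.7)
The plan is to apply Theorem~\ref{th:ADD_FAR} to both the proposed parsimonious scheme and to PW-$\Sigma_e$, and then compare the two asymptotic ADD expressions term by term. Denote the relevant quantities for the two methods with the subscripts $P$ and $A$. Since, after the attack, the watermarking in PW-$\Sigma_e$ is always active, the post-change log-likelihood is $\log\mathcal{L}_{a,j}$ at every step, so $l_{k,A}$ reduces to a constant (the random initial-condition term) plus a purely martingale-like remainder. For the proposed policy, the same $\log\mathcal{L}_{a,j}$ drives $S_k$ in Lemma~\ref{lemma:LSRn}, except during the initial phase in which $LSR_k < Th^S$ and the ``no-watermarking'' ratios $\mathcal{L}_{c,j},\mathcal{L}_{d,j}$ are used instead. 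Consequently the denominator $\text{E}_1[\text{D}(\mathpzc{f}^e_1,\mathpzc{f}_0)]+|\log(1-\rho)|$ in (\ref{eqn:ADD_2nd}) is \emph{identical} for the two schemes, and so are the overshoot-based constants $\bar{r}$ and $\xi$, which depend only on the asymptotic random-walk behavior after crossing $Th^S$.

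The entire discrepancy is therefore carried by the slowly changing term $\bar{l}$ in (\ref{eqn:lbar}). The first step is to write
\begin{equation*}
\widetilde{ADD} \;\triangleq\; ADD_P - ADD_A \;=\; \frac{\bar{l}_A-\bar{l}_P}{\text{E}_1[\text{D}(\mathpzc{f}^e_1,\mathpzc{f}_0)]+|\log(1-\rho)|} + o(1),
\end{equation*}
and to control $\bar{l}_A-\bar{l}_P$. Inspection of (\ref{eqn:ln_2nd}) shows that the two indicator-weighted sums involving $\mathcal{L}_{c,j}$ and $\mathcal{L}_{a,j}$ only accumulate over the random time window $\mathcal{T}^S \triangleq \{j : LSR_j < Th^S\}$. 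I would therefore bound $|\bar{l}_A-\bar{l}_P|$ by $\text{E}_1[|\mathcal{T}^S|]\cdot C$, where $C$ is a finite constant controlling the expected absolute value of the per-step log-likelihood ratios (finite under condition~C2 and its analogue for $\mathcal{L}_{c,j}$).

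The central step is then to argue that $\text{E}_1[|\mathcal{T}^S|]$ is finite and modest, because after $\Gamma$ the process $LSR_k$ has strictly positive drift even under the reduced (no-watermarking) KLD $\text{E}_1[\text{D}(\mathpzc{f}_1,\mathpzc{f}_0)]+|\log(1-\rho)|>0$, so by Wald's identity the expected time to cross the fixed finite level $Th^S$ is $O(Th^S)$, independent of $Th^D$. Combined with $Th^D\to\infty$, this yields
\begin{equation*}
\widetilde{ADD} \;=\; O(Th^S) \;=\; O(1) \quad \text{while} \quad ADD_A \;=\; \Theta(Th^D),
\end{equation*}
so the relative increase $\widetilde{ADD}/ADD_A\to 0$ as $Th^D\to\infty$, establishing the claim.

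The step I expect to be the main obstacle is the uniform bound on $\text{E}_1[|\mathcal{T}^S|]$: the indicator $\mathbbm{1}_{\{LSR_j<Th^S\}}$ couples the ``no-watermarking'' phase to the stopping rule, so a clean application of Wald requires showing that the renewal structure survives the switch in likelihood ratios at the moment $LSR_k$ dips back below $Th^S$. I would address this by using the slowly-changing property (condition~C3) to treat $l_k$ as asymptotically constant relative to the ladder variable $S_k$, so that $\mathcal{T}^S$ can be analysed as the occupation time of $S_k$ below $Th^S$, which is finite in expectation under positive drift.
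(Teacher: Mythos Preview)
Your proposal is correct and follows essentially the same route as the paper: apply Theorem~\ref{th:ADD_FAR} to both schemes, observe that the denominator and the overshoot constant $\bar r$ coincide, and isolate the entire discrepancy in the slowly changing term, arriving at
\[
\Delta ADD \;=\; ADD_P - ADD_A \;\approx\; \frac{\bar l_A - \bar l_P}{\text{E}_1[\text{D}(\mathpzc{f}^e_1,\mathpzc{f}_0)] + |\log(1-\rho)|},
\]
which is exactly the paper's (\ref{eqn:deltaADD_A}). Where you go beyond the paper is in the justification that $\bar l_A - \bar l_P$ is small: the paper simply asserts that $\bar l_A$ and $\bar l_P$ are each small relative to $Th^D\to\infty$ (relying on the slowly-changing property established in Lemma~\ref{lemma:LSRn}) and stops there, whereas you attempt a quantitative bound via the occupation time $\text{E}_1[|\mathcal{T}^S|]=O(Th^S)$ and Wald's identity. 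Your extra step is sound in spirit and gives a sharper statement ($\widetilde{ADD}=O(1)$ independent of $Th^D$), but it is more than the paper actually proves for this claim; the paper's argument is the coarser observation that a bounded numerator divided by a bounded denominator, compared against $Th^D\to\infty$, is small.
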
 

There will be an increase in the ADD for the proposed method. Other than the mean of the slowly changing term, $\bar l$, in the ADD expression (\ref{eqn:ADD_2nd}), the rest of the components will be the same for the proposed method and PW-$\Sigma_e$. Therefore, the increase in the ADD for the proposed method will be as follows, 
\begin{equation}
	\Delta ADD = ADD_P-ADD_A \approx \frac{{\bar l}_A-{\bar l}_P}{\text{E}_1\left[\text{D}\left(\mathpzc{f}^e_{1},\mathpzc{f}_{0}\right) \right]+|\log(1-\rho)|},
	\label{eqn:deltaADD_A}
\end{equation}
where the subscripts $A$ and $P$ denote the PW-$\Sigma_e$ and the proposed method, respectively. $o(1)$ notation is dropped for simplicity. Here ${\bar l}_P$ is the same as given by (\ref{eqn:ln_2nd}) and (\ref{eqn:lbar}), and ${\bar l}_A$ will take the following form
	\begin{align}
		{\bar l}_A&= \lim_{k \rightarrow \infty }\text{E}_1\left[l_{A,k} \right] \text{, and} \nonumber \\
		l_{A,k} &=  \log\left(SR_0+\sum_{i=1}^k(1-\rho)^{i-1}\mathcal{L}_{b,i}\exp(-Z_i)\right).
		\label{eqn:lbarA}
	\end{align}
Since,  ${\bar l}_A$ and ${\bar l}_P$ both are small quantities compared to $Th^D$, which is assumed to be $\rightarrow \infty$, we can make Claim~\ref{clm:ADD_A}.

\begin{clm}
	The FAR for the proposed optimal policy and PW-$\Sigma_e$ will almost be the same..
	\label{clm:FAR_A}
\end{clm}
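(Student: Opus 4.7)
The plan is to pivot on the asymptotic formula
\begin{equation*}
FAR \;\approx\; \frac{\xi}{\rho\,\exp(Th^D)}\bigl(1+o(1)\bigr)
\end{equation*}
from Theorem~\ref{th:ADD_FAR} and to show that each of the three ingredients on the right-hand side is essentially unchanged between the proposed policy and PW-$\Sigma_e$. The comparison is set up so that both schemes use the same detection threshold $Th^D$ and the same prior parameter $\rho$, so the problem reduces to verifying $\xi_P \approx \xi_A$, where the subscripts $P$ and $A$ refer to the proposed policy and PW-$\Sigma_e$, respectively. I would state this reduction first and make explicit that the only watermarking-dependent quantity is the overshoot constant $\xi=\lim_{n_d\to\infty}\text{E}_1[\exp(-r_{n_d})]$.

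Next I would analyse $\xi$ via the decomposition $LSR_k=S_k+l_k$ of Lemma~\ref{lemma:LSRn}. The ladder variable $S_k=Z_k+k|\log(1-\rho)|$ is, by construction, built from the persistent-watermarking log-likelihood increments $\log(\mathcal{L}_{a,i})$; the indicator-weighted corrections that reflect the absence of watermarking in the proposed policy are absorbed into the slowly changing term $l_k$, which does not enter the overshoot. Thus $r_{n_d}$ is determined entirely by the random walk $S_k$, whose increment distribution under $\text{P}_1$ is the same for both schemes provided the samples enter $S_k$ under the same effective observation law. Under $\text{P}_1$, the posterior $p_k$ has positive drift (given by the KLD in condition C4) and therefore crosses $Th^s$ within a bounded number of steps after $\Gamma$, after which the proposed policy coincides exactly with PW-$\Sigma_e$. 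Consequently the tail of the walk that governs crossings of a large boundary $Th^D$ is driven by identical increment statistics in both cases.

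I would then invoke the nonlinear renewal argument already used in the proof of Theorem~\ref{th:ADD_FAR}: when $Th^D\to\infty$, the overshoot $r_{n_d}$ converges in distribution to the stationary renewal excess of the common random walk, so $\xi_P\to\xi_A$. Substituting back gives $FAR_P/FAR_A\to 1$, i.e., the two false-alarm rates are asymptotically equivalent, which justifies Claim~\ref{clm:FAR_A}.

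The main obstacle is a clean justification that the $O(1)$ transient during which the proposed policy has $s_k=0$ after the change point has asymptotically negligible influence on the overshoot distribution. I would handle this by showing that $\text{E}_1[\inf\{k\ge\Gamma: p_k\ge Th^s\}-\Gamma]$ is finite and independent of $Th^D$, so that the number of ``mismatched'' increments is $O(1)$ while the number of increments needed to reach $Th^D$ is $\Theta(Th^D)$ by Corollary~\ref{corr:ADD_1st}; standard Anscombe-type arguments from \cite{siegmund2013sequential} then confirm that the limiting overshoot law is unaffected, closing the proof.
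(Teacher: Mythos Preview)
Your proposal is correct and lands on the same mechanism as the paper: both schemes share $\rho$ and $Th^D$, so equality of FAR reduces to equality of the overshoot constant $\xi$, which holds because watermarking is active in the regime that determines the crossing of $Th^D$. The paper's own justification is much terser---it simply notes that since $Th^D \ge Th^S$, watermarking is necessarily on for both policies at the moment $LSR_k$ crosses $Th^D$, so the overshoot statistics $r_{n_d}$ (and hence $\xi$) coincide; your decomposition-plus-transient argument is a more rigorous elaboration of exactly this observation.
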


Since $Th^D \ge Th^S$, the watermarking will be present for both cases when $Z_n$ crosses the threshold $Th^D$. In other words, the statistics of the overshoot $r_{n_d}$ will be the same for both methods. Therefore, we can make Claim~\ref{clm:FAR_A}. 

\subsubsection{Comparison with PW-$\Delta LQG$} \label{subsubsec:methodB}
  \begin{clm}
	The watermarking signal power for the proposed optimal policy, $\Sigma_{eP}$, will be greater than or equal to the watermarking signal power of PW-$\Delta LQG$, $\Sigma_{eB}$. 
	\label{clm:Sigma_e_B}
\end{clm}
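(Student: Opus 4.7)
The plan is a short algebraic comparison that reduces the claim to a bound on $\rho\,ANW_P$, where the subscript $P$ denotes the proposed method. First I would invoke Theorem~\ref{th:DeltaLQG} for the proposed parsimonious policy to write $\Delta LQG_P = \rho\,ANW_P\,\text{tr}(H\Sigma_{eP})$, together with the corresponding expression for the persistent baseline $\Delta LQG_B = \text{tr}(H\Sigma_{eB})$, which is the always-on specialization ($\rho\,ANW = 1$) pointed out in the remark immediately following Theorem~\ref{th:DeltaLQG} and also recorded as Theorem~3 of \cite{watermarking_tac}. Since PW-$\Delta LQG$ is defined by $\Delta LQG_P = \Delta LQG_B$, the two expressions rearrange to
\begin{equation*}
\text{tr}(H\Sigma_{eP}) \;=\; \frac{1}{\rho\,ANW_P}\,\text{tr}(H\Sigma_{eB}).
\end{equation*}

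The key step is then to show $\rho\,ANW_P \le 1$. Since the proposed policy only sets $s_k = 1$ on the subset of instants where $p_k \ge Th^s$ and $d_k = 0$, its per-realization watermarking count is pointwise dominated by that of the always-on policy, for which $\rho\,ANW = 1$; taking expectations preserves the inequality and gives $\rho\,ANW_P \le 1$, with strict inequality whenever the two-threshold policy is non-degenerate. An alternative, self-contained route is to bound the inner sum in (\ref{eqn:ANW_pk}) pointwise by $\mathbbm{1}_{\{\theta_k=0\}}$ and the outer range by $\min(\tau,\Gamma-1)$, yielding $ANW_P \le \text{E}^{\Pi}[\Gamma-1] = (1-\rho)/\rho$ under the geometric prior (\ref{eqn:Pi}) with $\Pi_0 = 0$, hence $\rho\,ANW_P \le 1-\rho < 1$. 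Substituting either bound into the displayed identity yields $\text{tr}(H\Sigma_{eP}) \ge \text{tr}(H\Sigma_{eB})$, i.e., the scalar watermarking power driving the LQG cost is no smaller under the proposed method, which is the claim.

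The hardest part will be interpretational rather than computational: the statement refers to ``watermarking signal power,'' and while the trace inequality above is immediate, lifting it to a matrix ordering $\Sigma_{eP} \ge \Sigma_{eB}$ in the genuinely MIMO case requires further structure. My plan for that lift is to appeal to the optimal-shape design in Section~\ref{sec:ADD_FAR_LQG}, under which both $\Sigma_{eP}$ and $\Sigma_{eB}$ are chosen to maximize KLD for a fixed $\Delta LQG$, so they share the same KLD-maximizing eigenstructure and differ only by a single positive scale parameter matched to the $\Delta LQG$ budget; the scalar inequality $\rho\,ANW_P \le 1$ then transfers verbatim to that parameter and produces the PSD ordering. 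For SISO watermarking, or when ``power'' is read as the scalar $\text{tr}(H\Sigma_e)$, no such lift is needed and the proof ends with the trace comparison.
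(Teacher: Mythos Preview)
Your approach is essentially identical to the paper's: equate $\text{tr}(H\Sigma_{eB}) = \rho\,ANW\,\text{tr}(H\Sigma_{eP})$ from the shared $\Delta LQG$ constraint and invoke $\rho\,ANW \le 1$ to conclude. Your explicit justification for $\rho\,ANW_P \le 1$ and your discussion of the trace-versus-PSD-ordering ambiguity in the MIMO case go beyond what the paper provides --- the paper simply asserts $\rho\,ANW \le 1$ and implicitly reads ``power'' as the scalar $\text{tr}(H\Sigma_e)$ without further comment.
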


 Since the increase in the LQG control cost is taken to be the same for both the methods, the watermarking signal powers $\Sigma_{eB}$ and $\Sigma_{eP}$ for the method PW-$\Delta LQG$ and the proposed method, respectively, will be different. The relationships between the watermarking signal power and $\Delta LQG$ for both the methods are given as, 
  \begin{equation}
 	\Delta LQG = tr\left(H\Sigma_{eB} \right) = \rho ANW tr\left(H\Sigma_{eP} \right).
 	\label{eqn:sigma_P}
 \end{equation}
  Since $\rho ANW \le 1$, from (\ref{eqn:sigma_P}) we can make Claim~\ref{clm:Sigma_e_B}.

\begin{clm} 
	The ADD for the proposed optimal policy will be less than or equal to the ADD for PW-$\Delta LQG$. 
	\label{clm:ADD_B}
\end{clm}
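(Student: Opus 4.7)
The plan is to derive Claim~\ref{clm:ADD_B} from the asymptotic ADD expression in Theorem~\ref{th:ADD_FAR} combined with the watermarking power inequality of Claim~\ref{clm:Sigma_e_B}. Writing $ADD_P$ and $ADD_B$ from (\ref{eqn:ADD_2nd}), and noting that PW-$\Delta LQG$ is just the special case of the proposed framework with $Th^s=0$ (so that $s_k=1$ for every $k$), both expressions share the same structural form. For a fair ADD comparison, I would fix the same FAR specification for both methods; by (\ref{eqn:FAR}), the constraint pins down $Th^D = -\log(FAR) + O(1)$ to leading order, independent of the watermarking power. Thus the numerators of $ADD_P$ and $ADD_B$ agree up to bounded corrections.

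Next, I would show that the KLD $\text{E}_1\!\left[\text{D}(\mathpzc{f}^e_{1},\mathpzc{f}_{0})\right]$ appearing in the denominator of (\ref{eqn:ADD_2nd}) is monotonically non-decreasing in $\Sigma_e$. Using Remark~\ref{rem:kld}, only the trace term $\text{tr}(\Sigma_0^{-1}\Sigma_{\widetilde\gamma})$ depends on $\Sigma_e$; from (\ref{eqn:sigma_gamma_attack}), $\Sigma_{\widetilde\gamma}$ is an affine function of $\Sigma_e$ whose $\Sigma_e$-dependent pieces are $CB\Sigma_e B^T C^T$ and $C(A+BL)\Sigma_{x^Fe}(A+BL)^T C^T$, both positive-semidefinite. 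The Lyapunov solution $\Sigma_{x^Fe}$ in (\ref{eqn:ExFxF_th1p2}) is itself monotone non-decreasing in $\Sigma_e$ because its driving term is a positive-semidefinite linear function of $\Sigma_e$ and $\mathcal{A}$ is strictly stable (operator-monotonicity of the discrete Lyapunov equation). Combining this monotonicity with Claim~\ref{clm:Sigma_e_B} ($\Sigma_{eP}\succeq\Sigma_{eB}$) yields
\begin{equation*}
\text{E}_1\!\left[\text{D}(\mathpzc{f}^e_{1,P},\mathpzc{f}_{0})\right]\;\ge\;\text{E}_1\!\left[\text{D}(\mathpzc{f}^e_{1,B},\mathpzc{f}_{0})\right].
\end{equation*}

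Finally, I would absorb the correction terms. By non-linear renewal theory, $\bar r$ and $\bar l$ are finite limits independent of $Th^D$ (see (\ref{eqn:rbar})--(\ref{eqn:Xi})), hence $O(1)$ as $Th^D\to\infty$. Substituting these observations into (\ref{eqn:ADD_2nd}) for each policy gives
\begin{equation*}
ADD_P \;=\; \frac{Th^D + O(1)}{\text{E}_1\!\left[\text{D}(\mathpzc{f}^e_{1,P},\mathpzc{f}_{0})\right] + |\log(1-\rho)|} \;\le\; \frac{Th^D + O(1)}{\text{E}_1\!\left[\text{D}(\mathpzc{f}^e_{1,B},\mathpzc{f}_{0})\right] + |\log(1-\rho)|} \;=\; ADD_B,
\end{equation*}
which is the desired inequality in the asymptotic regime $Th^D\to\infty$.

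The main obstacle is making the $O(1)$ bookkeeping rigorous: the constants $\bar r, \bar l, \xi$ depend on the full test-statistic distribution and hence implicitly on $\Sigma_e$ and on $Th^s$, so one must argue that these dependences are uniformly bounded as the threshold grows. A second subtlety is the monotonicity of the Lyapunov solution $\Sigma_{x^Fe}$ in $\Sigma_e$; this is standard but should be stated explicitly (using that $\mathcal{A}$ is strictly stable, the unique solution can be written as the norm-convergent series $\sum_{i\ge 0}\mathcal{A}^i (I-KC)B\Sigma_e B^T(I-KC)^T (\mathcal{A}^T)^i$, which is manifestly monotone in $\Sigma_e$). With these two technical pieces in place, the claim follows directly.
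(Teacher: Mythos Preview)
Your argument is essentially the same as the paper's: both compare the denominators of the asymptotic ADD formula via the KLD, invoke Claim~\ref{clm:Sigma_e_B} to get $\Sigma_{eP}\succeq\Sigma_{eB}$, and then use the monotonicity of $\Sigma_{\widetilde\gamma}$ in $\Sigma_e$ (your Lyapunov-series argument spells out what the paper summarises as ``by examining (\ref{eqn:sigma_gamma_attack})''). The only difference is that the paper works with the cruder approximation of Corollary~\ref{corr:ADD_1st} rather than Theorem~\ref{th:ADD_FAR}, so the $\bar r,\bar l$ corrections never enter and your $O(1)$ bookkeeping is unnecessary there. One caution that applies equally to both arguments: Claim~\ref{clm:Sigma_e_B} is proved only as a trace relation $\mathrm{tr}(H\Sigma_{eB})=\rho\,ANW\,\mathrm{tr}(H\Sigma_{eP})$, which does not by itself give the Loewner ordering $\Sigma_{eP}\succeq\Sigma_{eB}$ you use; this step implicitly assumes the two covariances share the same structure (e.g.\ scalar or the same rank-one direction, as in the paper's optimal design and simulations).
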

We use the ADD expression from Corollary~\ref{corr:ADD_1st} to compare the two methods. The difference in the ADD will be due to the difference in $\text{E}_1\left[\text{D}\left(\mathpzc{f}^e_{1},\mathpzc{f}_{0}\right) \right]_B$ and $\text{E}_1\left[\text{D}\left(\mathpzc{f}^e_{1},\mathpzc{f}_{0}\right) \right]_P$ as follows.
\begin{equation}
	\begin{aligned}
	\text{E}_1\left[\text{D}\left(\mathpzc{f}^e_{1},\mathpzc{f}_{0}\right) \right]_P- &\text{E}_1\left[\text{D}\left(\mathpzc{f}^e_{1},\mathpzc{f}_{0}\right) \right]_B  \\
	& \approx \frac{1}{2}\left(\text{tr}\left(\Sigma_{\gamma}^{-1}\left(\Sigma_{\widetilde \gamma{_P}}-\Sigma_{\widetilde \gamma{_A}} \right) \right) \right).
	\label{eqn:Delta_KLD}
	\end{aligned}
\end{equation}
Here the subscripts $B$ and $P$ denote the method PW-$\Delta LQG$ and the proposed method, respectively. By examining (\ref{eqn:sigma_gamma_attack}), we can say $\Sigma_{\widetilde \gamma_{P}}-\Sigma_{\widetilde \gamma_{B}} \ge 0$. Therefore, from (\ref{eqn:Delta_KLD}) we can write $\text{E}_1\left[\text{D}\left(\mathpzc{f}^e_{1},\mathpzc{f}_{0}\right) \right]_P-\text{E}_1\left[\text{D}\left(\mathpzc{f}^e_{1},\mathpzc{f}_{0}\right) \right]_B \ge 0$, and we can further make Claim~\ref{clm:ADD_B}.

\subsection{Optimum $\Sigma_e$} \label{subsec:optimum_sigma_e}
Theorem~\ref{th:ADD_FAR} and Corollary~\ref{corr:ADD_1st} imply that the increase in KLD will reduce ADD. Therefore, we derive the optimum $\Sigma_e$ that will maximize KLD for a given fixed upper limit on $\Delta LQG$, denoted as $\Delta LQG_P$ for the proposed method. The optimization problem is defined as follows. 
\begin{align}
	\max_{{\bf \Sigma}_e} &\  \text{E}_1\left[\text{D}\left(\mathpzc{f}^e_{1},\mathpzc{f}_{0}\right) \right] \label{eqn:opt_prob},  \nonumber \\
	\textrm{s.t.}\  & \Delta LQG_P \le J, \\
	& {\bf \Sigma}_e \ge 0, \nonumber
\end{align}
where $J$ is a user-defined threshold. As given in Remark~\ref{rem:kld}, the KLD expression for the proposed parsimonious watermarking policy is identical with the case where watermarking is always present, \ie, the method PW-$\Sigma_e$. Moreover, $\Delta LQG_P$ (\ref{eqn:DeltaLQG}) is just a scaled version of $\Delta LQG_A$ (\ref{eqn:DeltaLQGA}). Therefore, the condition $\Delta LQG_P \le J$ in (\ref{eqn:opt_prob}) can be replaced by $\Delta LQG_A \le J_A$, where $J_A = J/(\rho ANW)$, without any change in the optimum $\Sigma_e$ value. Now, the optimization problem for the proposed method becomes identical to the optimization problem for the method PW-$\Sigma_e$. According to Theorem~4 from \cite{watermarking_tac}, the optimum $\Sigma_e$ for PW-$\Sigma_e$ will be a rank one positive semi-definite matrix. Therefore, the optimization problem in (\ref{eqn:opt_prob}) can be written as 
\begin{align}
	\max_{{\bf v}_{\lambda}} &\  \text{E}_1\left[\text{D}\left(\mathpzc{f}^e_{1},\mathpzc{f}_{0}\right) \right] \label{eqn:opt_prob_2nd}  \nonumber \\
	\textrm{s.t.}\  & \Delta LQG_A \le J_A ,
\end{align}
where ${{\bf v}_{\lambda}} = \sqrt \sigma_e{{\bf v}_{e}}$, $\sigma_e$ is the non-zero eigenvalue of $\Sigma_e$ and ${{\bf v}_{e}}$ is the corresponding eigenvector. As discussed in \cite{watermarking_tac}, the maximization of $\text{E}_1\left[\text{D}\left(\mathpzc{f}^e_{1},\mathpzc{f}_{0}\right) \right]$ with respect to ${{\bf v}_{\lambda}}$ is same as maximizing the following function,
\begin{align}
	&\max_{{\bf v}_{\lambda}} \  {\bf v}_{\lambda}^T{\bf H}_{KLD}{\bf v}_{\lambda} \label{eqn:opt_prob_3rd}  \nonumber \\
	&\textrm{s.t.}\   \Delta LQG_A \le J_A, 
	\end{align} 
where
\begin{align}
	{\bf H}_{KLD} = {\bf B}^T \left({\bf I}_n - {\bf K}{\bf C}\right)^T{\kappa}_e\left({\bf I}_n - {\bf K}{\bf C}\right){\bf B}+{\bf B}^T{\bf C}^T{\bf C}{\bf B}. 
\end{align}
Here, ${\kappa}_e$ is the solution to the Lyapunov equation
\begin{align}
	{\cal{A}}^T{\kappa}_e{\cal{A}}-{\kappa}_e+\left({\bf A}+{\bf B}{\bf L}\right)^T{\bf C}^T{\bf C}\left({\bf A}+{\bf B}{\bf L}\right)=0.
	\label{eqn:Le}
\end{align}
Since the matrix ${\cal A}$ is assumed to be strictly stable, the Lyapunov equation of (\ref{eqn:Le}) will have a unique solution.
As discussed in \cite{watermarking_tac}, the optimization problem of (\ref{eqn:opt_prob_3rd}) can be solved by various methods available in the literature, such as sequential quadratic programming (SQP) \cite{Boggs1995}, interior point method \cite{Forsgren2002}, simple gradient-based method \cite{watermarking_tac}, etc. Interested readers are referred to \cite{watermarking_tac} for a detailed analysis, but the same has been removed from the current paper due to space constraints.
\section{Numerical Results}  \label{sec:num_results}
This section will illustrate and validate different aspects of the proposed methodology using, System-A: a second-order multi-input single-output (MISO) open-loop unstable system and System-B: a fourth-order MIMO open-loop stable system. Appendix~\ref{apdx:system_params} provides the required parameters for simulations associated with System A and B. 
\subsubsection{Optimal Policy}
Figure~\ref{fig:ud_for_diff_le} shows the optimal decision variable ${\bf u}^*_{d,k}$ vs. $p_k$ plots for three different values of $\lambda_e$ and a fixed $\lambda_f$  for System-A. The watermarking signal variance is taken to be a diagonal matrix with equal signal power, $\sigma_{e}^2$. We  observe that the optimal policy is a two threshold policy, which validates the theory presented in Sub-section~\ref{subsec:struc_opt_soln}. A higher $\lambda_e$ means a stricter constraint on how much watermarking could be added, which gets reflected into higher $Th^s$. On the other hand, a higher $Th^s$ means watermarking will be added for fewer samples. As discussed in Sub-section~\ref{subsubsec:methodA}, since the added watermarking has little effect on the FAR, the change in $\lambda_e$ does not affect the threshold $Th^d$ much.


\begin{figure}[h!]
	\centering
	\includegraphics[width=\figwidth]{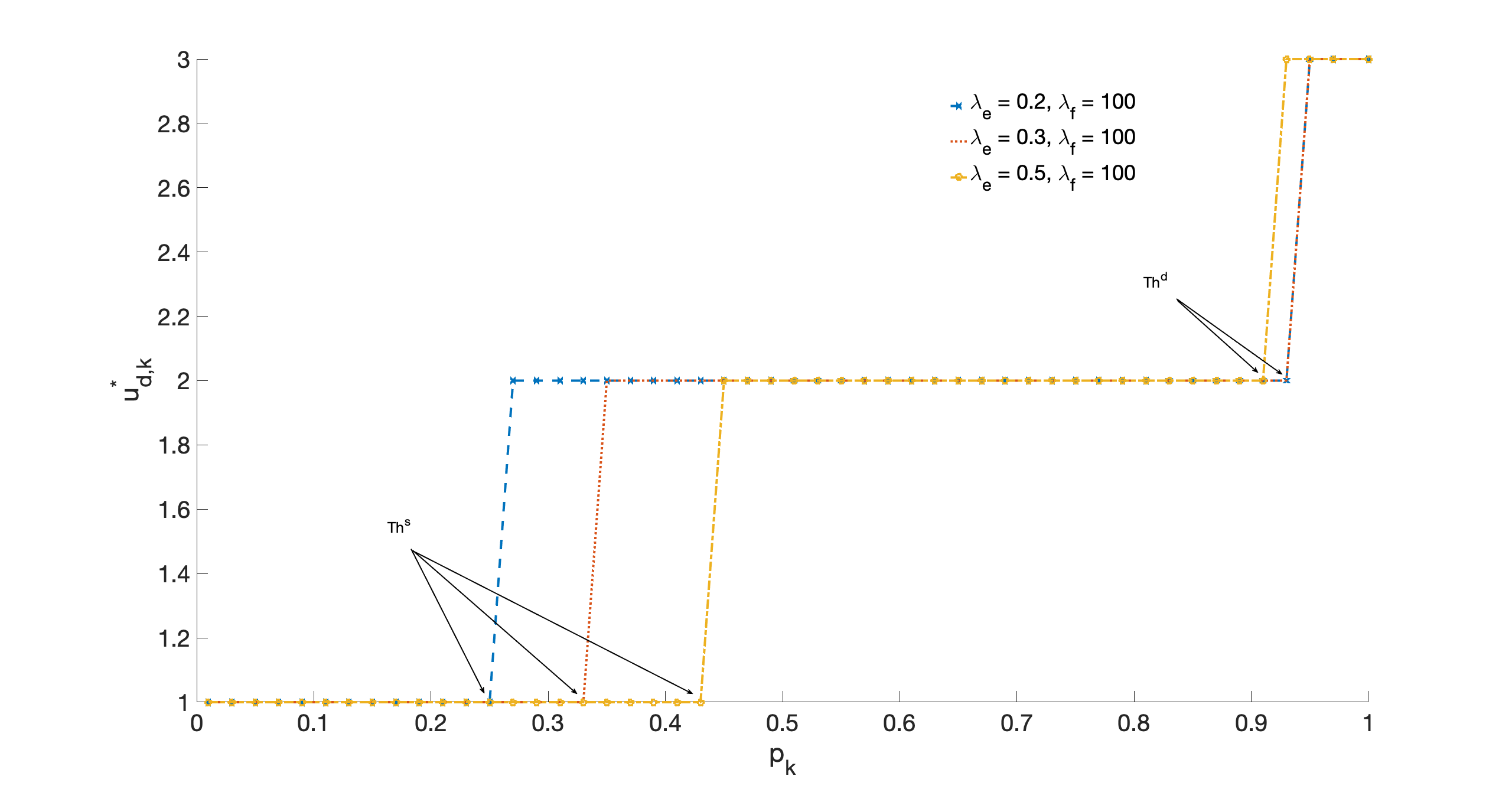}
	\caption{Optimal policy for different $\lambda_e$ and fixed $\lambda_f$ for System-A. $\sigma_{e}^2 = 1.19$.}
	\label{fig:ud_for_diff_le}
\end{figure}

Figure~\ref{fig:ud_for_diff_lf} shows the optimal decision variable ${\bf u}^*_{d,k}$ vs. $p_k$ plots for three different values of $\lambda_f$ and a fixed $\lambda_e$ for System-A. The watermarking signal variance is taken to be a diagonal matrix with equal signal power, $\sigma_e^2$.  A higher $\lambda_f$ means a stricter constraint on how much FAR could be allowed. Therefore, the increase in $\lambda_f$ increases the threshold $Th^d$. {\color{black} However, since $Th^d \ge Th^s$, the change in $\lambda_f$ does not affect the threshold $Th^s$.}

\begin{figure}[h!]
	\centering
	\includegraphics[width=\figwidth]{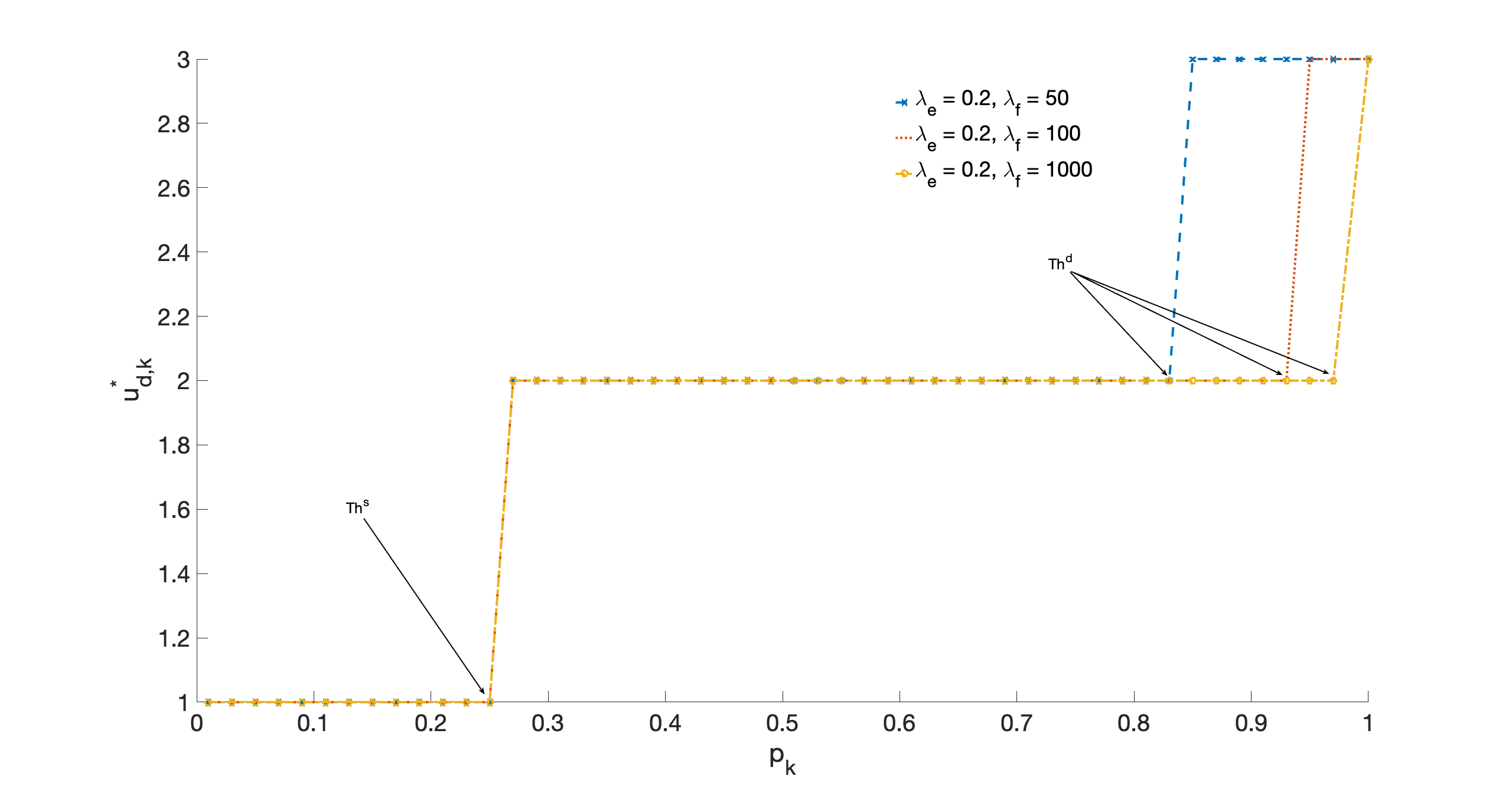}
	\caption{Optimal policy for different $\lambda_f$ and fixed $\lambda_e$ for System-A. $\sigma_{e}^2 = 1.19$.}
	\label{fig:ud_for_diff_lf}
\end{figure}

\subsubsection{Trial Run}
Figure-\ref{fig:tiral_run} illustrates how the control variables $s_k$ and $d_k$, and the sufficient statistics $p_k$ change with the time index $k$ for a sample trial run for System-A. The watermarking signal variance is taken to be a diagonal matrix with equal signal power, $\sigma_{e}^2$. We have also indicated the attack start point as ``change pt" in the plot. This figure provides relevant insights into how the proposed method works. We can observe that only for a very few time instances $p_k \ge Th^s$, and watermarking have been added before the attack. Such parsimonious use of watermarking reduces the control cost before the attack. On the other hand, $p_k$ increases gradually after the attack start point and eventually crosses the threshold $Th^d$. In other words, $p_k \ge Th^s$ and watermarking have been added almost all the time after the change point, resulting in faster detection.

\begin{figure}[h!]
	\centering
	\includegraphics[width=\figwidth]{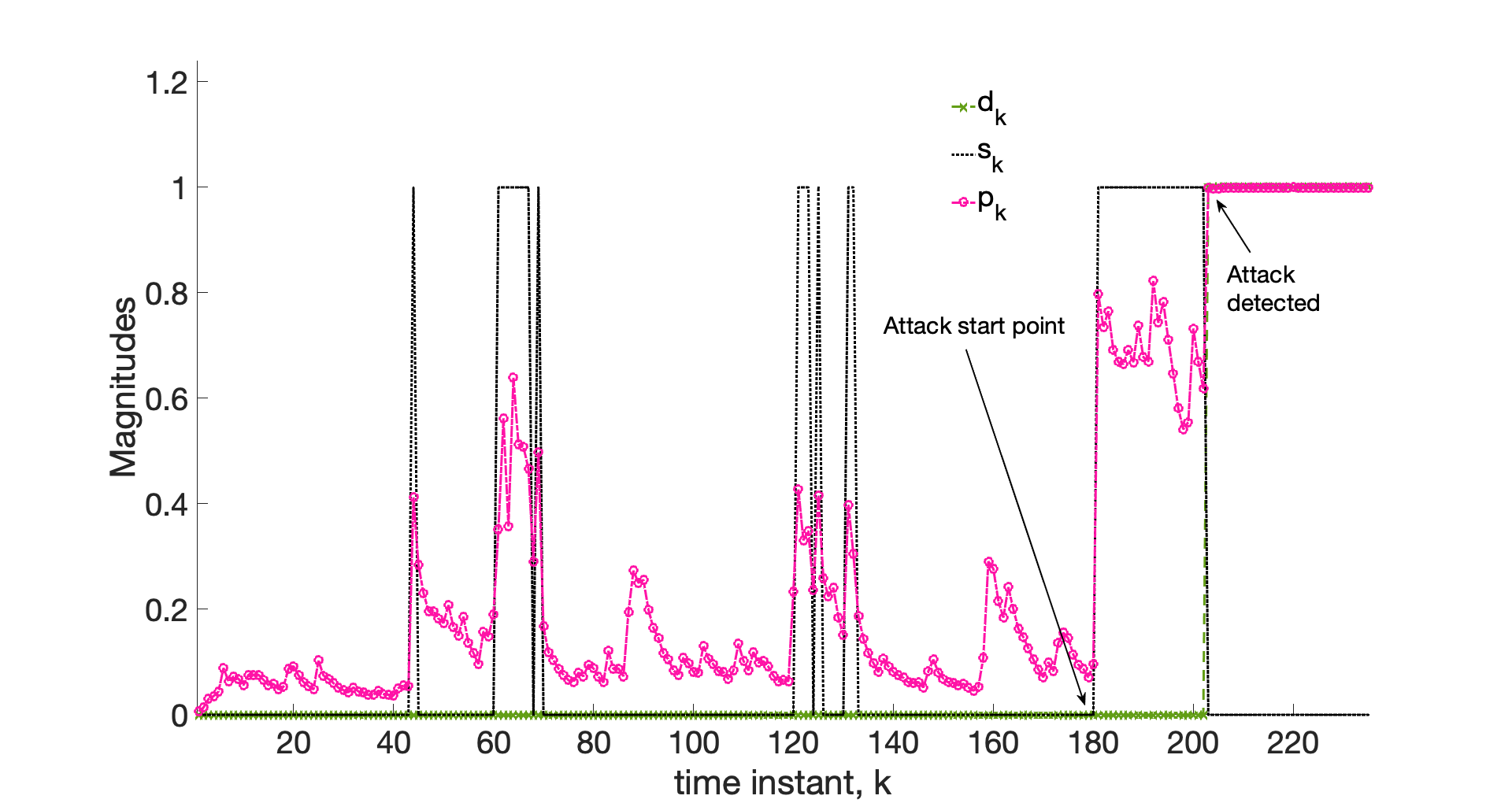}
	\caption{$s_k$, $d_k$ and $p_k$ vs. $k$ for a sample trial run for System-A. $\lambda_e = 0.2$, $\lambda_f = 100$ and $\sigma_{e}^2 = 1.19$.}
	\label{fig:tiral_run}
\end{figure}

\subsubsection{ADD and FAR vs. $\sigma_{e}^2$}
Figure~\ref{fig:ADD_lambda_e} shows the comparison between the plots of ADD and FAR vs. $\sigma_{e}^2$ for two different values of $\lambda_e$ for System-A. $\Sigma_e$ is taken to be a diagonal matrix with equal signal power, \ie, $\sigma_{e}^2$. For each $\sigma_{e}^2$ point, the thresholds $Th^s$ and $Th^d$ are derived using value iterations from dynamic programming. Then, the ADD and FAR are estimated by MC simulations using the derived thresholds. As discussed before, higher $\lambda_e$ reduces the usage of watermarking before the attack by increasing the threshold $Th^s$. The derived approximate expression of ADD (\ref{eqn:ADD_2nd}) reveals that the ADD does not depend on $\lambda_e$ or $Th^s$ directly. However, from (\ref{eqn:ln_2nd}) and (\ref{eqn:lbar}), we can say that the ${\bar l}$ reduces with the reduction in watermarking, which in turn increases ADD. Since ${\bar l}$ is a small quantity compared to $Th^D$, the effect of the change of ${\bar l}$ is small on ADD. To summarize, lower  $\lambda_e$ results in slightly lower ADD. Similarly, the derived approximate expression of FAR (\ref{eqn:FAR}) reveals that the FAR does not depend on $\lambda_e$ or $Th^s$ also. That is why we observe very similar FAR curves for two different values of $\lambda_e$ in Fig.~\ref{fig:ADD_lambda_e}. 

\begin{figure}[h!]
	\centering
	\includegraphics[width=\figwidth]{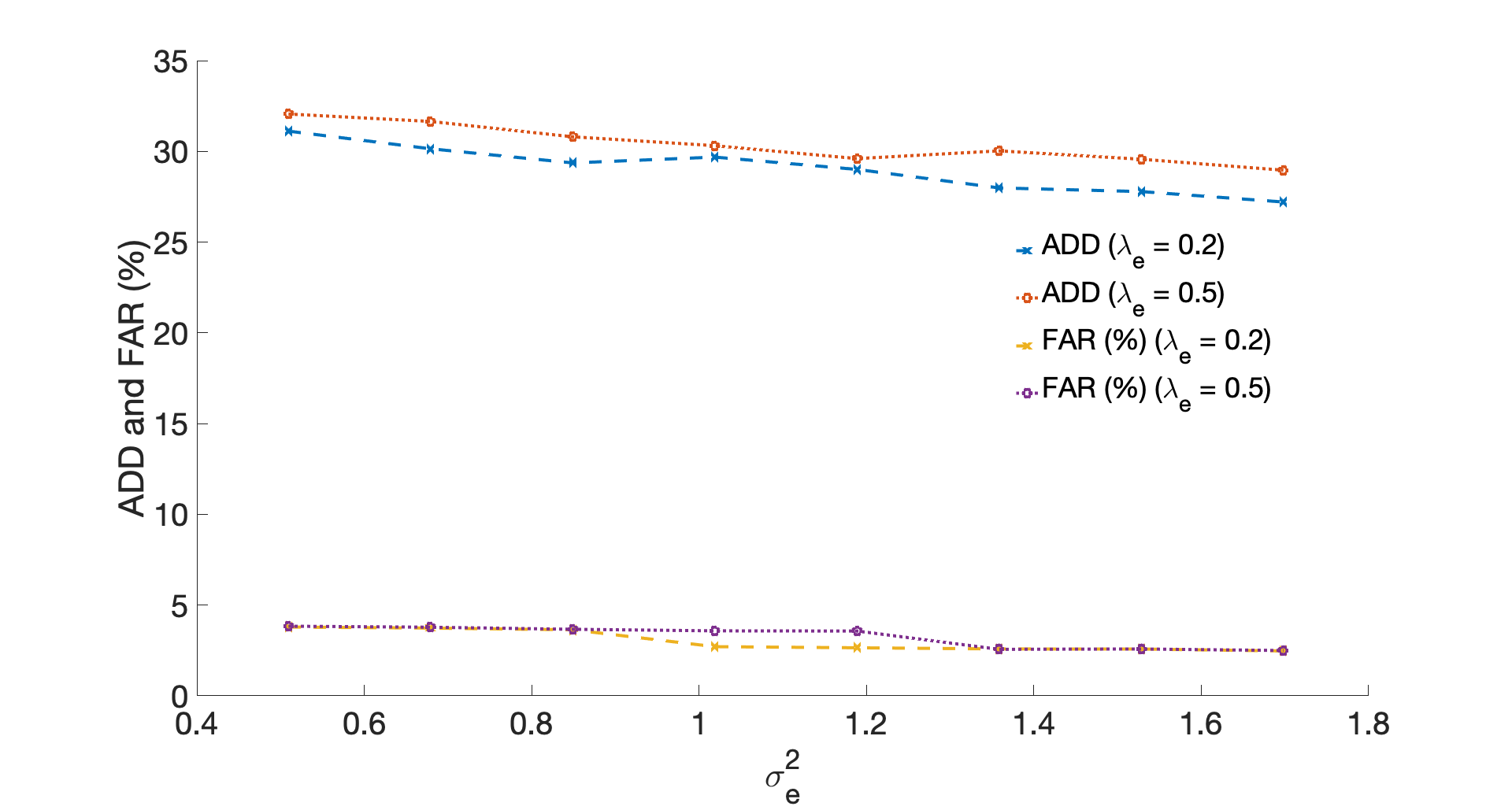}
	\caption{ADD and FAR (\%) vs. $\sigma_{e}^2$ plots for two different $\lambda_e$ and  $\lambda_f = 100$, for System-A.}
	\label{fig:ADD_lambda_e}
\end{figure}

Figure~\ref{fig:ADD_lambda_f} compares the same set of plots as in Figure~\ref{fig:ADD_lambda_e}, but for two different values of $\lambda_f$ and a fixed $\lambda_e$ for System-A. As discussed before, the increase in $\lambda_f$ increases $Th^d$. From (\ref{eqn:ADD_2nd}) and (\ref{eqn:FAR}), we know that ADD and FAR are mainly dependent on the value of $Th^D$. ADD increases with the increase in $Th^D$, whereas FAR reduces. To summarize, ADD increases and FAR decreases with $\lambda_f$.

In both the figures, Fig.~\ref{fig:ADD_lambda_e} and Fig.~\ref{fig:ADD_lambda_f}, ADD reduces with the increase in the watermarking signal power, which is primarily the result of increased KLD (\ref{eqn:opt_kld}). On the other hand, FAR (\ref{eqn:FAR}) does not reduce much with the watermarking signal power since the correlation is weak. Higher watermarking signal power increases the overshoot $r_{n_d}$ to some extent, which in turn reduces $\xi$ (\ref{eqn:Xi}) slightly.

\begin{figure}[h!]
	\centering
	\includegraphics[width=\figwidth]{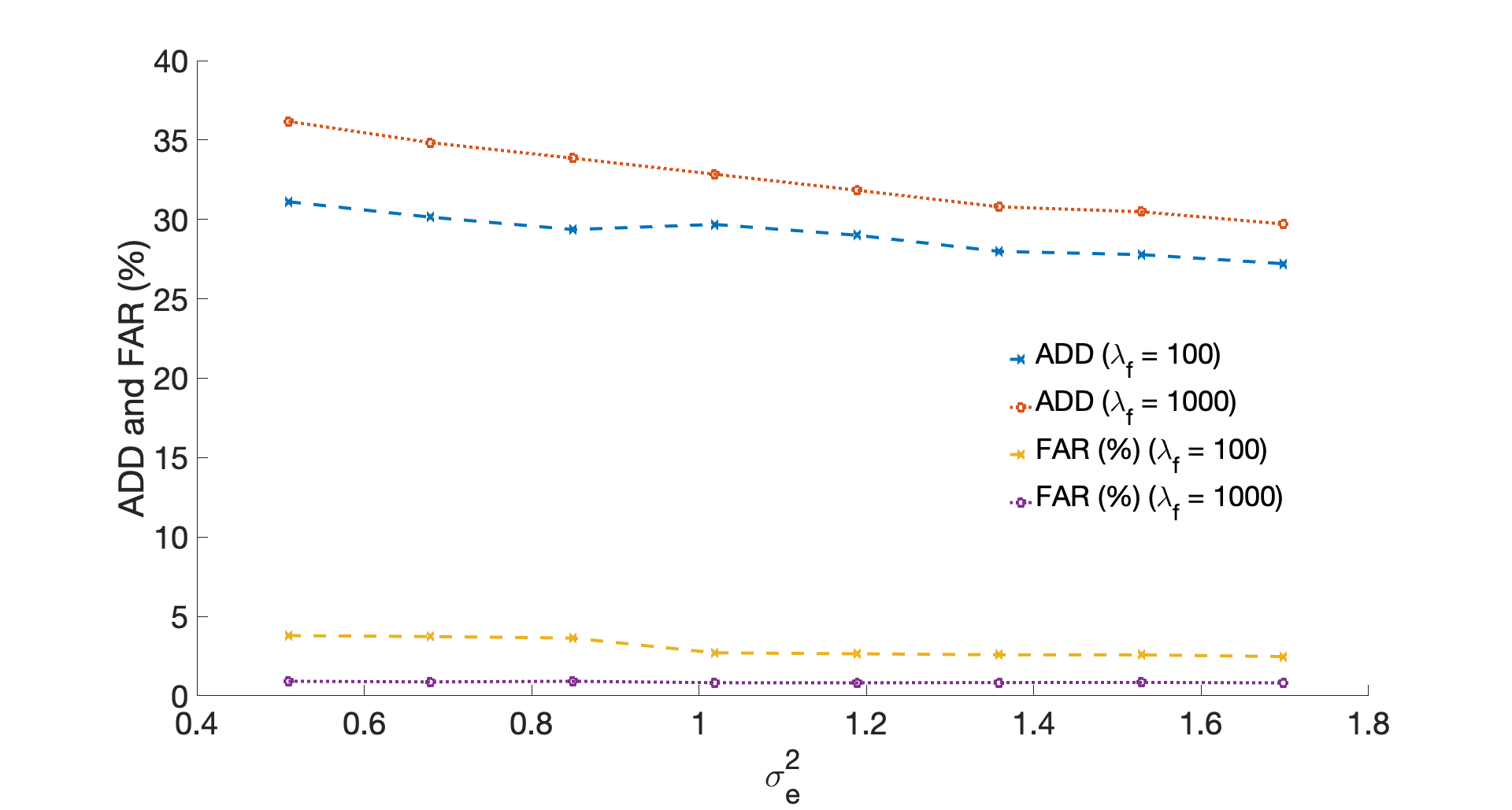}
	\caption{ADD and FAR (\%) vs. $\sigma_{e}^2$ plots for two different $\lambda_f$ and  $\lambda_e = 0.2$, for System-A.}
	\label{fig:ADD_lambda_f}
\end{figure}

\subsubsection{ADD, FAR and $\Delta LQG$ theoretical values}
Figure~\ref{fig:ADD_theory} shows the ADD and FAR vs. $\sigma_{e}^2$ plots for System-A, where ADD and FAR are estimated by MC simulations and also derived using Theorem~\ref{th:ADD_FAR} and Corollary~\ref{corr:ADD_1st}. The watermarking signal variance is taken to be a diagonal matrix with equal signal power, \ie, $\sigma_{e}^2$. For each $\sigma_{e}^2$ point, the thresholds $Th^s$ and $Th^d$ are derived using dynamic programming value iterations. Figure~\ref{fig:ADD_theory_system_b} shows the same set of plots as in Fig.~\ref{fig:ADD_theory} for System-B. The ADD derived using MC simulations does not reduce at the same rate as that of the approximate theoretical ADD with the increase in $\sigma_{e}^2$. {\color{black} The reason is that the derived analytical expression of ADD is asymptotically approximate. On the other hand, we have selected the parameter values for the MC simulations so that ADD remains small for the ease of simulation studies. Within the small delay window after the change point for the MC simulations, the increase in $\sigma_{e}^2$ is not making much difference to the estimated ADD. On the other hand, the simulation study shows that $\xi$ (\ref{eqn:Xi}) does not change much for a small increase in $\sigma_e^2$. Therefore, from (\ref{eqn:FAR}), we can say FAR will only be affected to a small extent due to the increase in $\sigma_e^2$. Therefore, we observe that the simulated FAR and the theoretical FAR are in close agreement in Fig.~\ref{fig:ADD_theory}. We also see that the derived ADD from Theorem~\ref{th:ADD_FAR} is a better match compared to the ADD derived from Corollary~\ref{corr:ADD_1st}. }

\begin{figure}[h!]
	\centering
	\includegraphics[width=\figwidth]{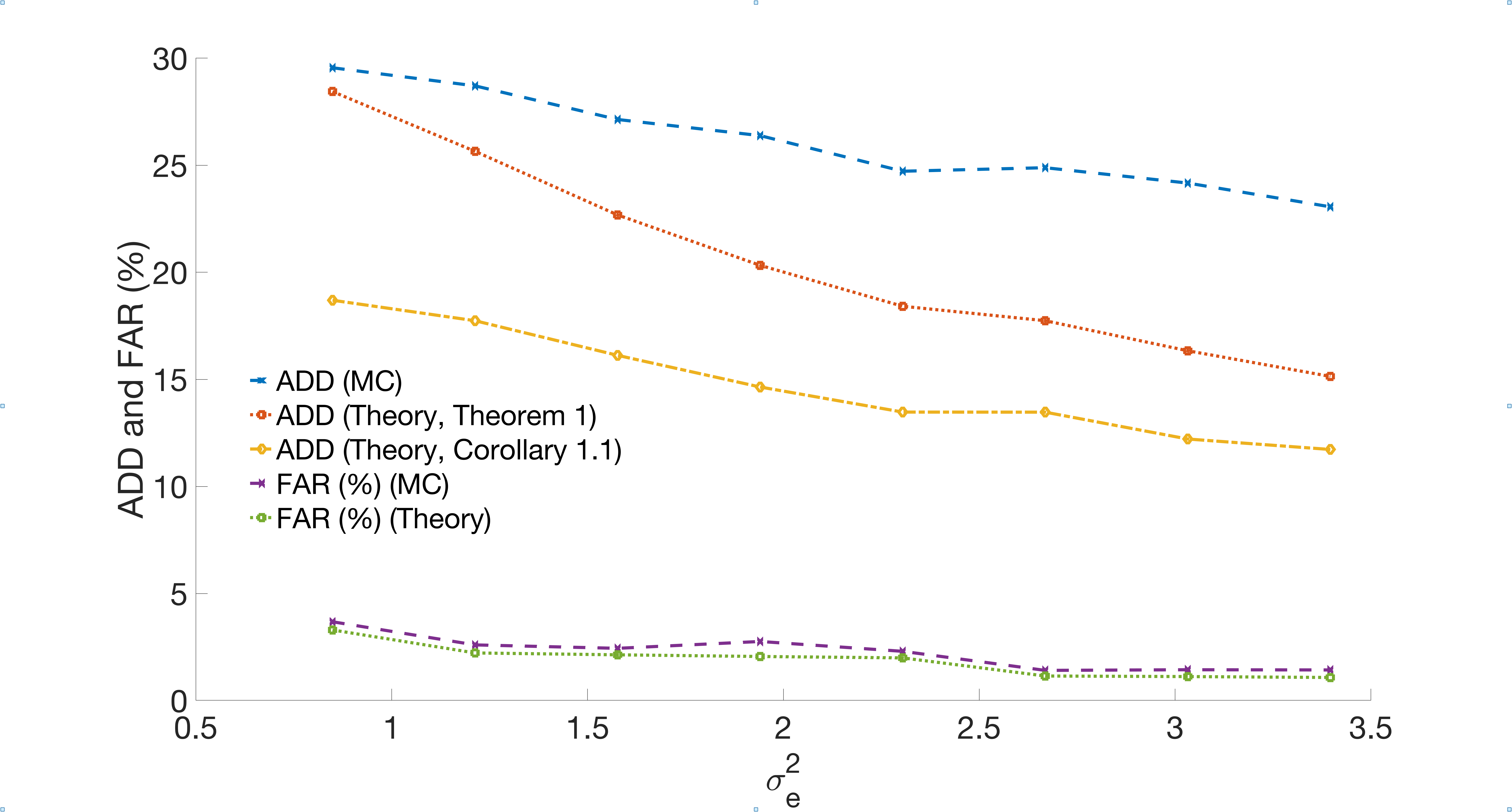}
	\caption{Comparison between the estimated values and theoretical values. ADD and FAR (\%) vs. $\sigma_{e}^2$ plot for System-A. $\lambda_f=100$ and $\lambda_e = 0.2$.}
	\label{fig:ADD_theory}
\end{figure}

\begin{figure}[h!]
	\centering
	\includegraphics[width=70mm]{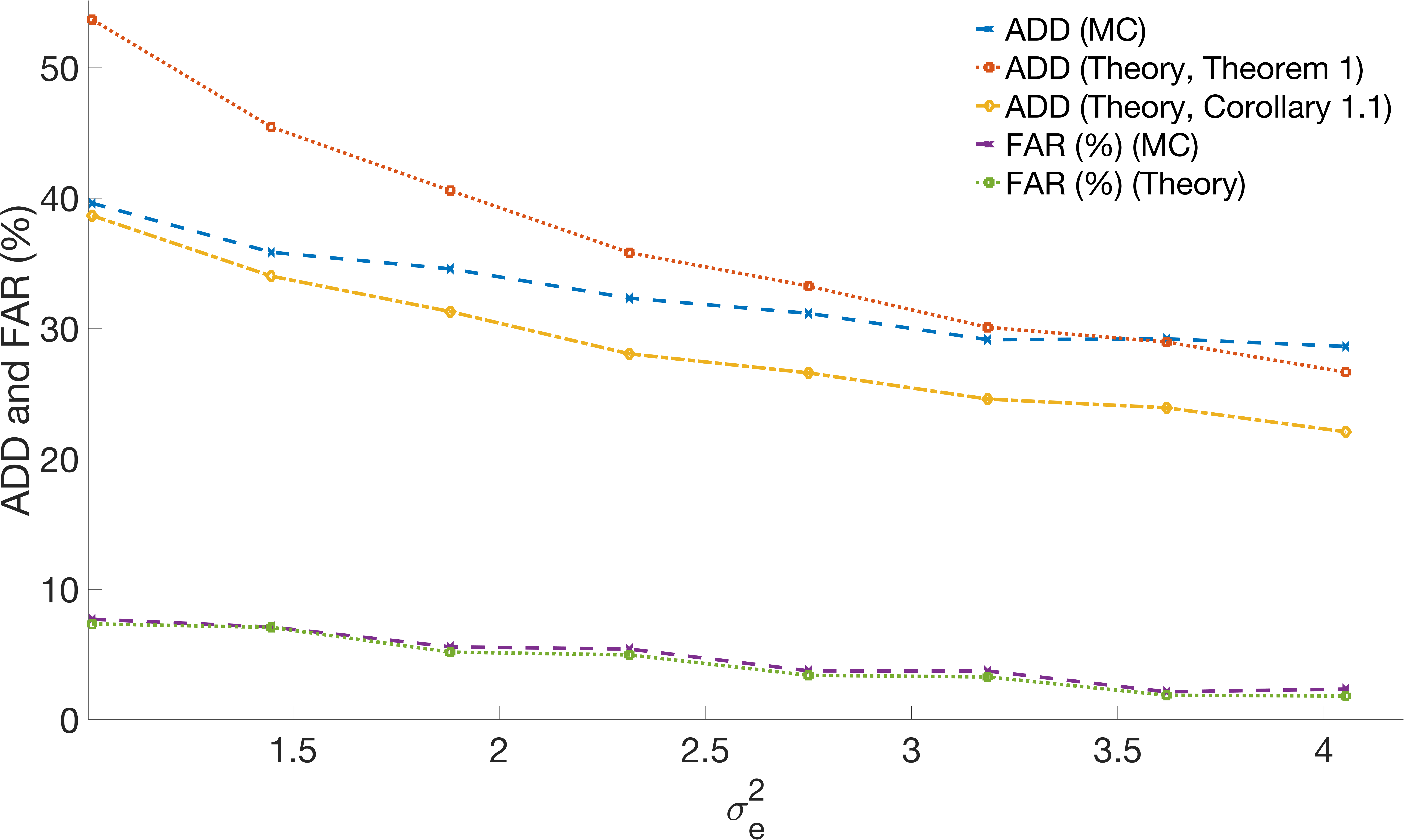}
	\caption{Comparison between the estimated values and theoretical values. ADD and FAR (\%) vs. $\sigma_{e}^2$ plot for System-B. $\lambda_f=100$ and $\lambda_e = 0.2$.}
	\label{fig:ADD_theory_system_b}
\end{figure}

Figure~\ref{fig:Dlqg_theory} shows the $\Delta LQG$ vs. $\sigma_{e}^2$ plot, where $\Delta LQG$ is estimated by MC simulation and also derived using the theory presented in this paper for System-A using the same parameters as Fig.~\ref{fig:ADD_theory}. The watermarking signal variance is taken to be a diagonal matrix with equal signal power, \ie, $\sigma_{e}^2$. From the derived expression of $\Delta LQG$ (\ref{eqn:DeltaLQG}), it is evident that the control cost will increase with the increase in watermarking signal power.

\begin{figure}[h!]
	\centering
	\includegraphics[width=\figwidth]{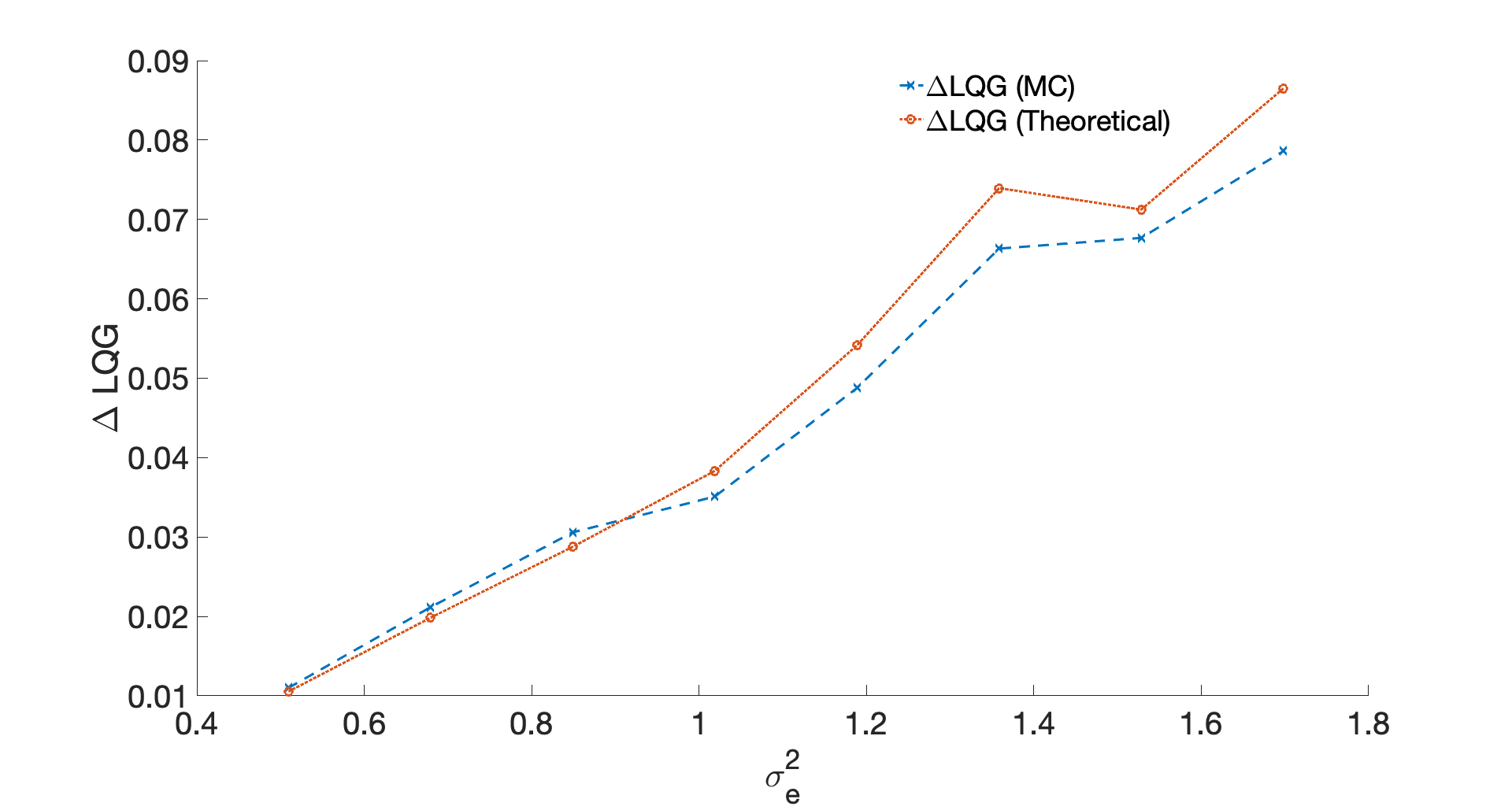}
	\caption{Comparison between the estimated values and theoretical values. $\Delta LQG$ vs. $\sigma_{e}^2$ plot for System-A. $\lambda_f=100$ and $\lambda_e = 0.2$.}
	\label{fig:Dlqg_theory}
\end{figure}

\subsubsection{Comparison with PW-$\Sigma_e$}
Figure~\ref{fig:deltaLQG_A} compares the $\Delta LQG$ vs $\sigma_{e}^2$ plot from the proposed method and PW-$\Sigma_e$ assuming a diagonal $\Sigma_e$ with equal power, $\sigma^2_e$, for System-A. For each $\sigma_{e}^2$ point, the thresholds $Th^s$ and $Th^d$ are derived using dynamic programming value iterations for the proposed method, and the same thresholds are used for PW-$\Sigma_e$ for a fair comparison.  From the derived expression of $\widetilde {\Delta LQG} $ (\ref{eqn:diff_deltaLQG}), we predicted that we would get a large improvement in the control cost since $\rho$ and ANW both are small quantities. Also, the difference will increase with $\sigma_e^2$ as $\Delta LQG_A$ increases with $\Sigma_e$. As predicted from the theory discussed in Sub-section~\ref{subsubsec:methodA}, we observe a large improvement in the control cost (approx. 99\% reduction in $\Delta LQG$) for the proposed method in Fig.~\ref{fig:deltaLQG_A}, which validates our Claim~\ref{clm:LQG_A}.

\begin{figure}[h!]
	\centering
	\includegraphics[width=\figwidth]{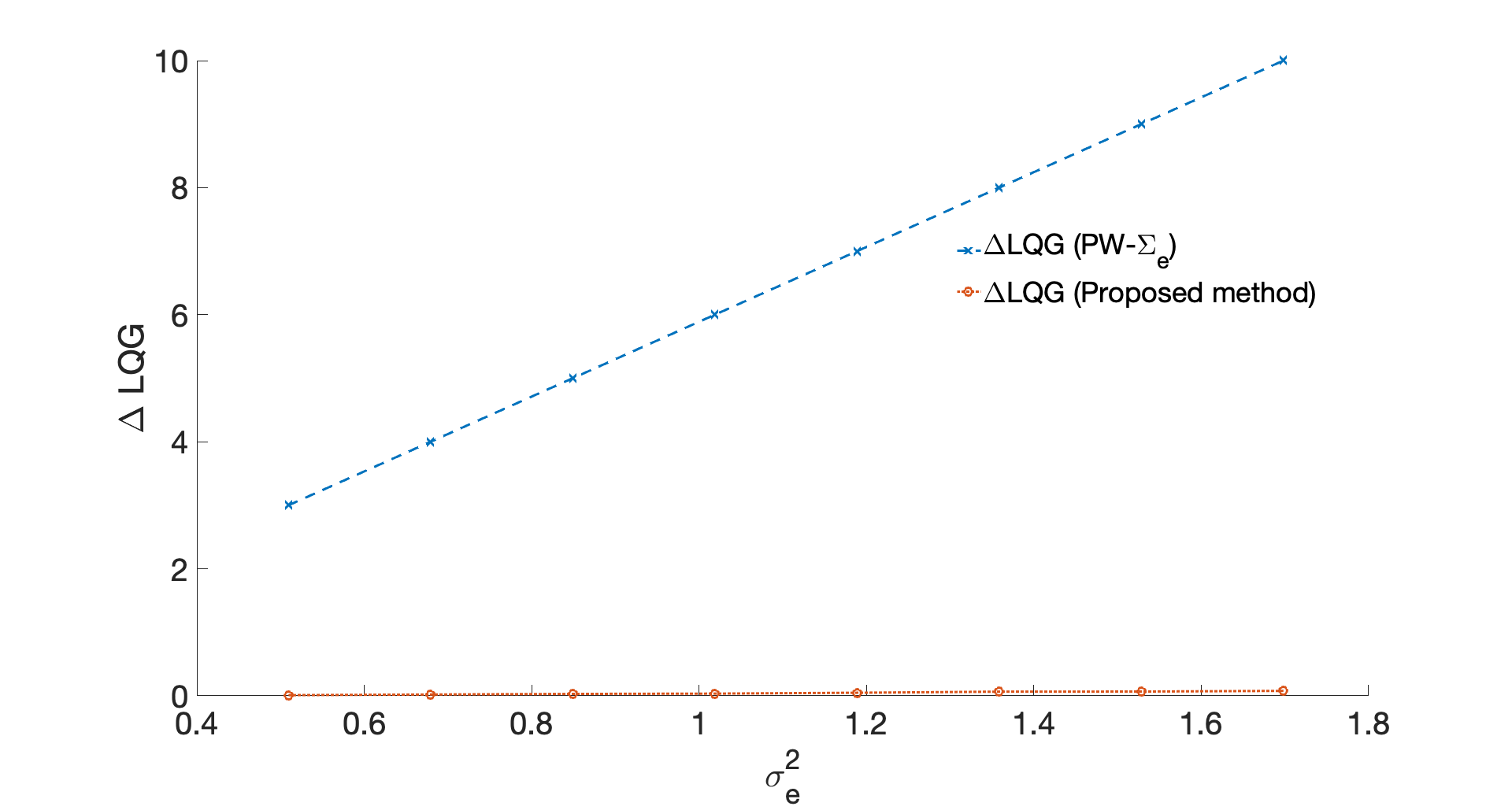}
	\caption{Comparison between proposed method and PW-$\Sigma_e$. $\Delta LQG$ vs. $\sigma_{e}^2$ plot for System-A. $\lambda_f=100$ and  $\lambda_e = 0.2$.}
	\label{fig:deltaLQG_A}
\end{figure}

We have shown ADD and FAR vs. $\sigma_{e}^2$ plots for the proposed method and PW-$\Sigma_e$ using the same parameters as Fig.~\ref{fig:deltaLQG_A} in Fig.~\ref{fig:ADD_A}. From the derived expression of $\Delta ADD$ (\ref{eqn:deltaADD_A}), we can comment that the proposed method will take a longer time on average to detect the attack compared to PW-$\Sigma_e$. The difference is due to the slowly changing terms, ${\bar l}_A$ and  ${\bar l}_P$. Since the magnitude of the slowly changing term usually remains small, the increase in ADD for the proposed method is also small. In Fig.~\ref{fig:ADD_A}, an average increase of 35\% (approx.) in ADD is observed at the same FAR for the proposed method. On the other hand,  as discussed in Sub-section~\ref{subsubsec:methodA}, FAR will be the same for both the methods and the same is observed in Fig.~\ref{fig:ADD_A}. To summarize, Fig.~\ref{fig:ADD_A} supports our Claim~\ref{clm:ADD_A} and Claim~\ref{clm:FAR_A}.

\begin{figure}[h!]
	\centering
	\includegraphics[width=\figwidth]{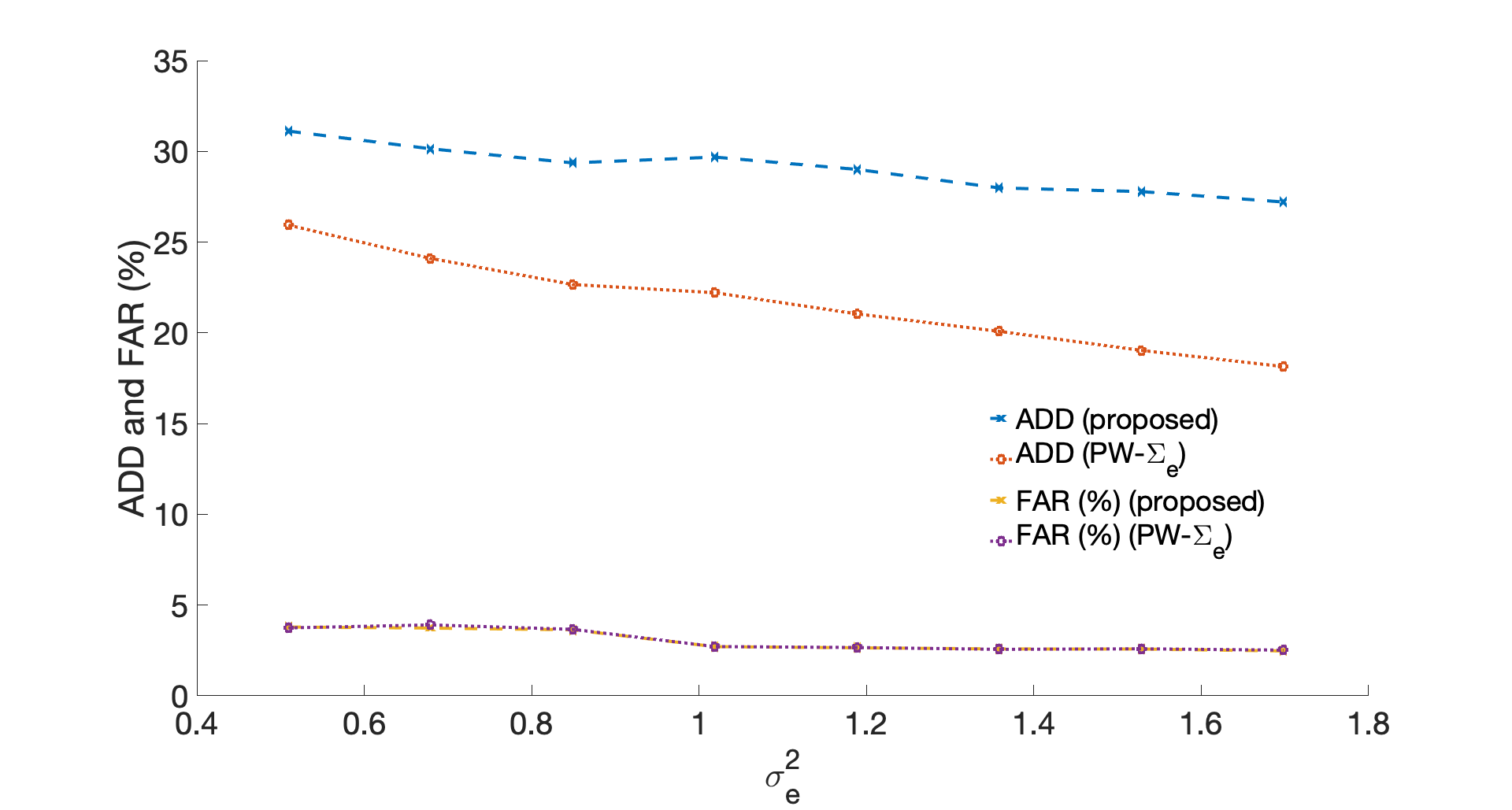}
	\caption{Comparison between proposed method and PW-$\Sigma_e$. ADD and FAR vs. $\sigma_{e}^2$ plot for System-A. $\lambda_f=100$ and  $\lambda_e = 0.2$.}
	\label{fig:ADD_A}
\end{figure}

\subsubsection{Comparison with PW-$\Delta LQG$}
We have shown ADD and FAR vs. $\Delta LQG$ plots derived from MC simulations for the proposed method and PW-$\Delta LQG$ assuming a diagonal $\Sigma_e$ with equal power $\sigma_e^2$ in Fig.~\ref{fig:ADD_B} for System-A. For each $\Delta LQG$ point, the thresholds $Th^s$ and $Th^d$ are derived using dynamic programming value iterations for the proposed method, and the same thresholds are used for PW-$\Delta LQG$ for a fair comparison. {\color{black} In general, for the proposed method, $Th^s$ decreases and $Th^d$ increases with the increase in $\Delta LQG$ or $\sigma^2_e$ for fixed $\lambda_e$ and $\lambda_f$. Since the same thresholds are used for PW-$\Delta LQG$, the ADD increases with $\Delta LQG$ in the plot. }As discussed in Sub-section~\ref{subsubsec:methodB}, since the proposed method uses a higher watermarking signal variance at the same control cost, the KLD for the proposed method is higher compared to PW-$\Delta LQG$. Higher KLD for the proposed method results in lower ADD, and the same characteristic is observed in Fig.~\ref{fig:ADD_B}. Also, the usage of higher watermarking signal power increases the overshoot statistic to a small extent, resulting in a small decrease in FAR. To summarize, Fig.~\ref{fig:ADD_A} supports our Claim~\ref{clm:Sigma_e_B} and Claim~\ref{clm:ADD_B}.

\begin{figure}[h!]
	\centering
	\includegraphics[width=\figwidth]{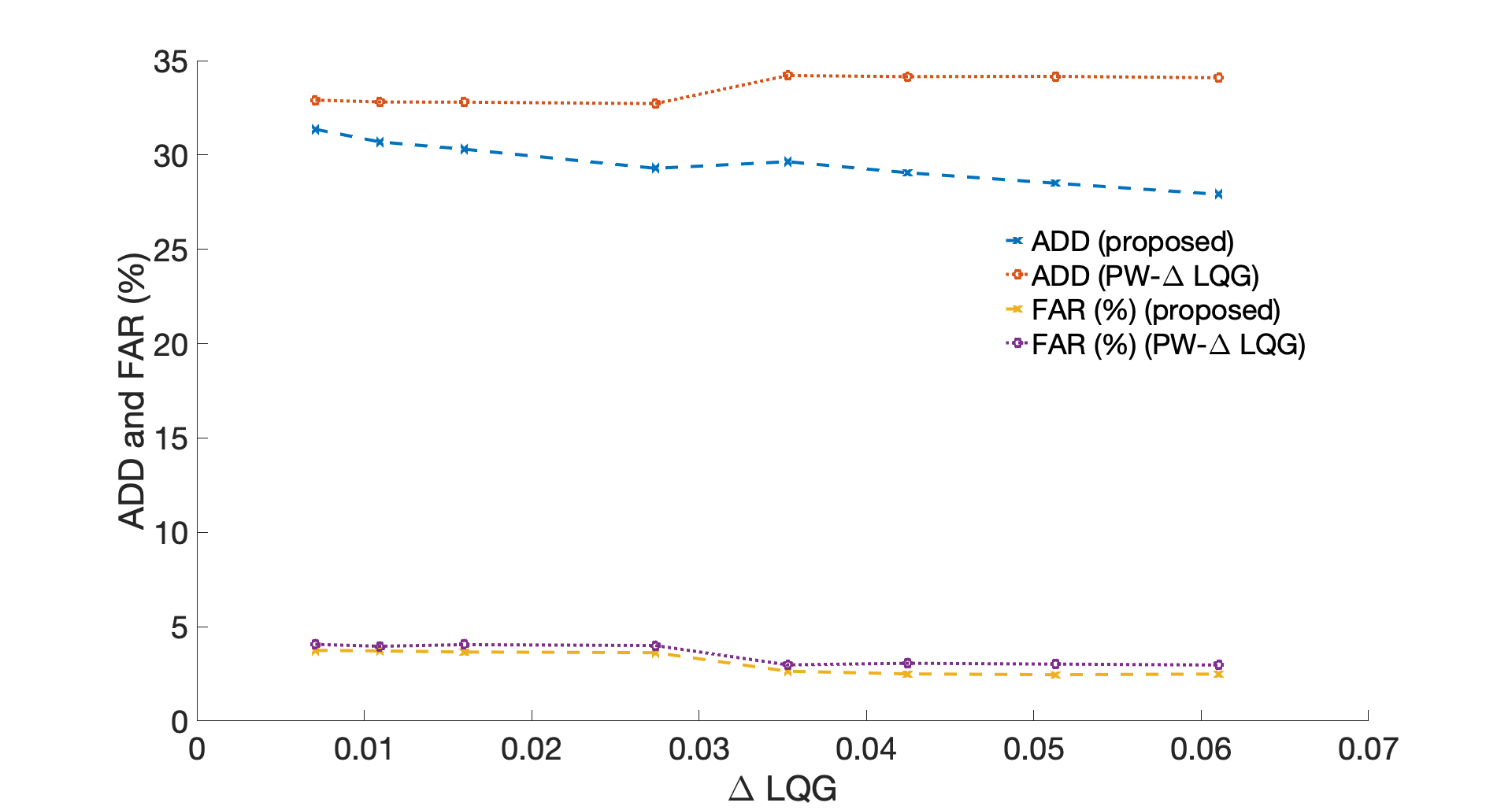}
	\caption{Comparison between proposed method and PW-$\Delta LQG$. ADD and FAR vs. $\Delta LQG$ plot for System-A. $\lambda_f=100$ and  $\lambda_e = 0.3$.}
	\label{fig:ADD_B}
\end{figure}

\subsubsection{Optimum $\Sigma_e$}
	As discussed in Sub-section~\ref{subsec:optimum_sigma_e}, the optimum $\Sigma^*_e$ reduces the KLD for a fixed upper bound on the $\Delta LQG$, which in turn reduces the ADD. We compare the ADD for the optimum $\Sigma_e^*$ and the diagonal $\Sigma_e$ in Fig.~\ref{fig:opt_e} for System-A. For each $\sigma_{e}^2$ point, the thresholds $Th^s$ and $Th^d$ are derived using dynamic programming value iterations for the diagonal $\Sigma_e$ case, and the same thresholds are used for the optimum $\Sigma_e^*$ case for a fair comparison. We observe an average increase of 14\% (approx.) in the estimated ADD for the optimal $\Sigma^*_e$. For the optimal $\Sigma^*_e$, the watermarking signal power is mostly concentrated in one eigenvector direction, which results in higher overshoot and a lower FAR. 
	
	\begin{figure}[h!]
		\centering
		\includegraphics[width=\figwidth]{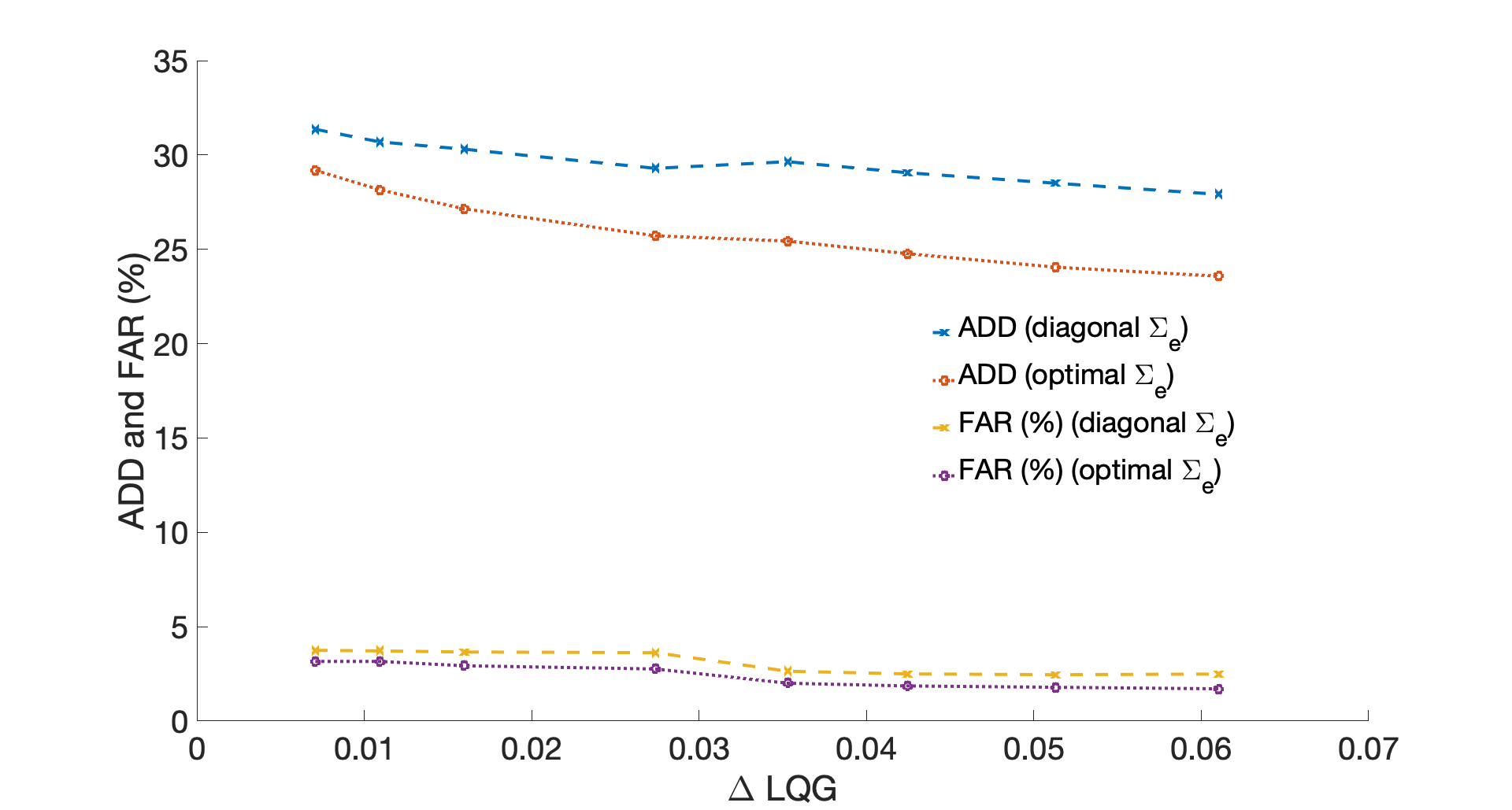}
		\caption{Comparison between diagonal $\Sigma_e$ and optimal $\Sigma^*_e$. ADD and FAR vs. $\Delta LQG$ plot for System-A. $\lambda_f=100$ and  $\lambda_e = 0.3$.}
		\label{fig:opt_e}
	\end{figure}

\subsubsection{Comparison with a periodic watermarking scheme} \label{subsec:compare_peroidic}
We have compared the proposed evidence-based parsimonious watermarking scheme with a periodic watermarking scheme. The periodic watermarking scheme is adopted from \cite{Fang2020} for our problem formulation. To fairly compare both methods, we have evaluated ADD and FAR by MC simulations for the same $\Delta LQG$ values. Under both schemes, the $p_k$ has been evaluated and compared with the same $Th^d$ value for attack detections. Note that $Th^d$ values are different for different $\Delta LQG$ values. However, watermarking has been added under the proposed scheme if $p_k \ge Th^s$. On the other hand, watermarking is added only once in a period for the other method, and the periods are determined separately for each $\Delta LQG$ value. Since the periodic watermarking scheme does use any existing evidence extracted from the set $\Psi_k$ of all available information upto the $k$-th time instant, the watermarking frequency remains the same before and after the attack. However, for the proposed scheme, the watermarking frequency increases significantly after the attack (approx. 50 times), which reduces ADD and FAR, see Fig.~\ref{fig:ADD_periodic}.
\begin{figure}[h!]
	\centering
	\includegraphics[width=\figwidth]{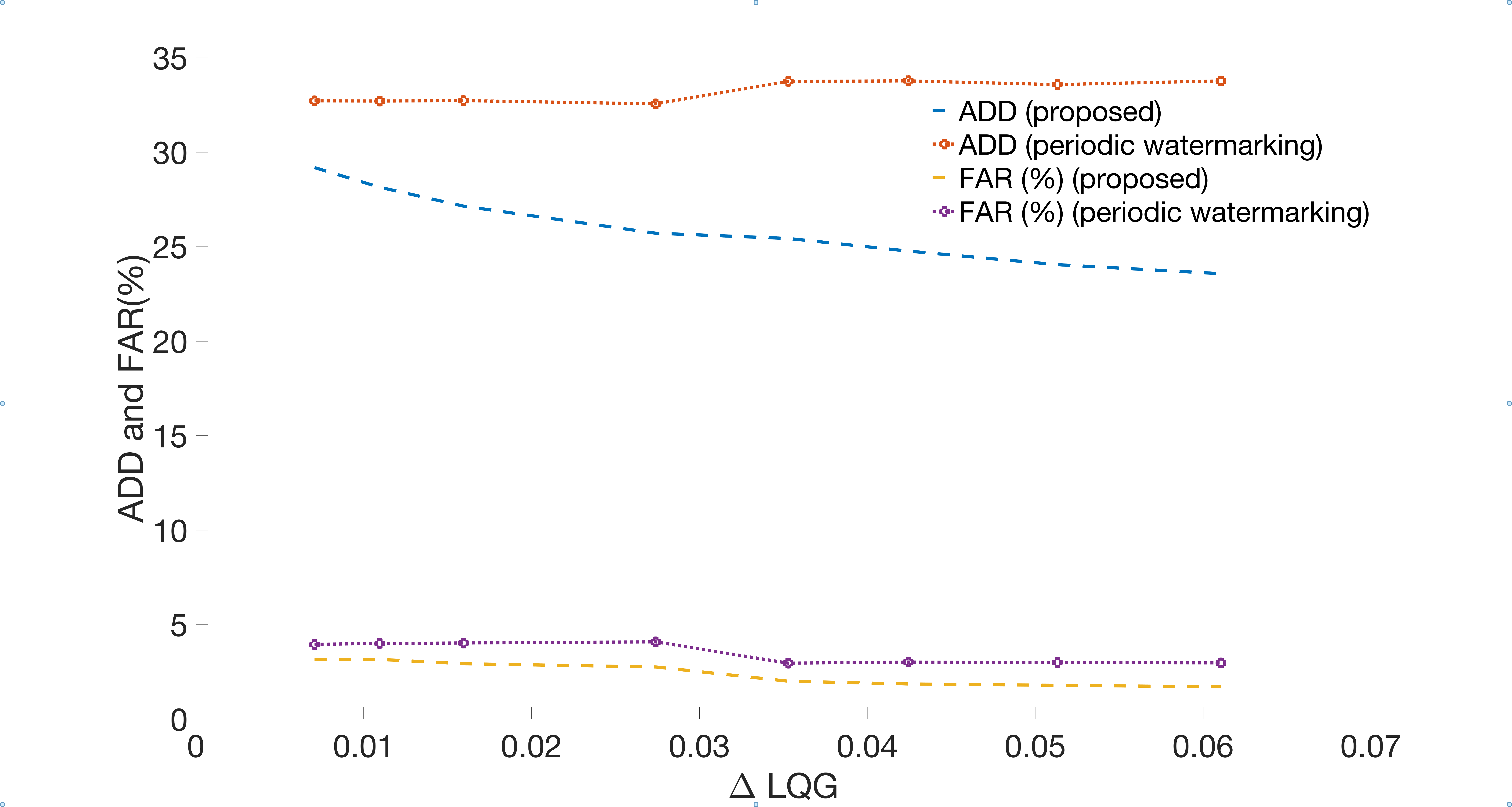}
	\caption{Comparison between proposed method and a periodic watermarking scheme. ADD and FAR vs. $\Delta LQG$ plot for System-A. $\lambda_f=100$ and  $\lambda_e = 0.3$.}
	\label{fig:ADD_periodic}
\end{figure}

\section{Conclusion}
\label{sec:conclusion}
In this paper, we have studied the quickest data deception attack detection problem with constraints on FAR and ANW. Such parsimonious use of watermarking helps to reduce the control cost during normal system operations and maintain a moderate detection performance. First, we have formulated the problem as a stochastic optimal control problem under a Bayesian framework. Then, we have applied dynamic programming to find the optional policy. We have studied the optimal policy structure and found the optimal policy to be a two threshold policy on the posterior probability of attack under a few practical assumptions. We have also derived the asymptotic approximate expressions of ADD and FAR applying non-linear renewal theory. The analytical expression of $\Delta LQG$ and its relationship with ANW is also derived. Theoretical and simulation studies reveal significant improvement in reducing $\Delta LQG$ with a relatively small increase in ADD compared to the method PW-$\Sigma_e$, where watermarking is always present. The proposed method is also compared with PW-$\Delta LQG$, where both the methods have the same $\Delta LQG$ limit and found that the proposed method performs better in terms of ADD and FAR. Furthermore, we have described a technique to find the optimal watermarking signal power that maximises the KLD, which will improve the ADD.

\appendices
\section{Proof of Lemma~\ref{lemma:SRn}} \label{apdx:SRn}
The likelihood ratio, $\mathcal{L}_{a,j}$, of the joint dependent distributions of the innovation signal and the watermarking signal, after and before the attack, takes the following form, 
\begin{equation}
	\mathcal{L}_{a,j} = \frac{\mathpzc{f}_{1,j}\left(\gamma_j,{\bf e}_{s,j-1}|\left\{\gamma\right\}_1^{j-1},\left\{{\bf e}_s\right\}_1^{j-2} \right)}{\mathpzc{f}_{0,j}\left(\gamma_j,{\bf e}_{s,j-1}|\left\{\gamma\right\}_1^{j-1},\left\{{\bf e}_s\right\}_1^{j-2}\right)}.
	\label{eqn:Laj_apdx}
\end{equation}
$\gamma_j$ is iid before the attack. Therefore, applying the chain rule to $\mathpzc{f}_{1,j}(\cdot|\cdot)$, (\ref{eqn:Laj_apdx}) can be written as given in (\ref{eqn:Laj}). Using a similar argument, (\ref{eqn:Lbj}) can be derived. 

To derive (\ref{eqn:f1j}), (\ref{eqn:mu_1j}) and (\ref{eqn:sigma_1j}), we rewrite (\ref{eqn:gamma_attack_vs_e}) for the case, $k > \Gamma$, applying (\ref{eqn:hidden_states_main}) as 
\begin{equation}
	\widetilde\gamma_j ={\bf w}_{a,j-1}+{\bf A}_a{\bf z}_{j-1}-{\bf C}\left({\bf A}+{\bf B}{\bf L}\right){\bf \hat x}_{j-1|j-1}-{\bf C}{\bf B}{\bf e}_{s,j-1}.
	\label{eqn:gamma_attack_apndx}
\end{equation}
Using the recursive state equations and the Kalman time update equation, (\ref{eqn:gamma_attack_apndx}) can be written as
\begin{align}
	&\gamma_j ={\bf w}_{a,j-1}+\left({\bf A}_a{\bf C}-{\bf C}\left({\bf A}+{\bf B}{\bf L}\right)\right)\left(\left({\bf A}+{\bf B}{\bf L}\right)^{j-2}{\hat{\bf{x}}}_{1|1} \right. \nonumber \\
	& \left. + \sum_{r=1}^{j-2}\left({\bf A}+{\bf B}{\bf L}\right)^{r-1}{\bf B}{\bf e}_{s,j-r-1}+\sum_{r=2}^{j-2}\left({\bf A}+{\bf B}{\bf L}\right)^{r-1}{\bf K}{ \gamma}_{j-r}\right) \nonumber \\ 
	&-{\bf C}{\bf B}{\bf e}_{s,j-1}+\left({\bf A}_a-{\bf C}\left({\bf A}+{\bf B}{\bf L}\right){\bf K}\right){{\bf \gamma}}_{j-1} \label{eqn:gamma_attack_apndx_2}.
\end{align}
From the assumptions that $\left({\bf A}+{\bf B}{\bf L}\right)$ is strictly stable and the system started at $j=-\infty$, we can say $\left({\bf A}+{\bf B}{\bf L}\right)^{j-2} \rightarrow {\bf 0}$ as $j \rightarrow \infty$. Therefore, (\ref{eqn:gamma_attack_apndx_2}) will take the following form as $j \rightarrow \infty$, 
 \begin{align}
 	&\gamma_j ={\bf w}_{a,j-1}+\left({\bf A}_a{\bf C}-{\bf C}\left({\bf A}+{\bf B}{\bf L}\right)\right) \nonumber \\
 	& \left( \sum_{r=1}^{j-2}\left({\bf A}+{\bf B}{\bf L}\right)^{r-1}{\bf B}{\bf e}_{s,j-r-1}+\sum_{r=2}^{j-2}\left({\bf A}+{\bf B}{\bf L}\right)^{r-1}{\bf K}{ \gamma}_{j-r}\right) \nonumber \\ 
 	&-{\bf C}{\bf B}{\bf e}_{s,j-1}+\left({\bf A}_a-{\bf C}\left({\bf A}+{\bf B}{\bf L}\right){\bf K}\right){{\bf \gamma}}_{j-1} \label{eqn:gamma_attack_apndx_3}.
 \end{align}
Using (\ref{eqn:gamma_attack_apndx}) and (\ref{eqn:gamma_attack_apndx_3}), we can derive the following, 
\begin{align}
	&{\bf \mu}_{1,j}=\text{E}_1\left[{ {\bf \gamma}_j}|\left\{\gamma\right\}_1^{j-1},\left\{{\bf e}_s\right\}_1^{j-1}\right] \nonumber \\
	&=\text{E}_1\left[{ {\bf \gamma}_j}|{\bf z}_{j-1},{\bf {\hat x}}_{j-1|j-1},{\bf e}_{s,j-1}\right] \cr
	&={\bf A}_a{\bf z}_{j-1}-{\bf C}\left({\bf A}+ {\bf B}{\bf L} \right){\bf {\hat x}}_{j-1|j-1} - {\bf C}{\bf B}{\bf e}_{s,j-1}, \text{and} \\ 
	&\Sigma_{1,j}=cov\left({ {\bf \gamma}_j}|{\bf z}_{j-1},{\bf {\hat x}}_{j-1|j-1},{\bf e}_{s,j-1}\right)={\bf Q}_a.
\end{align}
Using the same approach, (\ref{eqn:f2j}), (\ref{eqn:mu_2j}) and (\ref{eqn:sigma_2j}) can be derived from (\ref{eqn:gamma_attack_vs_e}) for the case, $k = \Gamma$. Taking expectations on both sides of (\ref{eqn:gamma_vs_e}), we get $\text{E}_0\left[ \gamma_j\right]=\bf 0$. (\ref{eqn:sigma_0j}) is derived from (\ref{eqn:gamma_normal}) as, 
\begin{align}
	\gamma_j&={\bf y}_j-{\bf C}{\hat {\bf x}}_{j|j-1}={\bf C}\left({\bf x}_j-{\hat {\bf x}}_{j|j-1}\right)+{\bf v}_j \text{, and} \nonumber \\
	{ \Sigma}_0 &= \text{E}_0\left[\gamma_j\gamma_j^T\right] ={\bf C}{\bf P}{\bf C}^T+{\bf R}. \label{eqn:Sigma_gamma_apn_corr1p1}
\end{align}

\section{Proof of Lemma~\ref{lemma:LSRn}} \label{apdx:LSRn}
The following form of the $LSR_k$ is derived by taking logarithms on both sides of (\ref{eqn:SRn_recurr}), and combining both the conditions in (\ref{eqn:SRn_recurr}) using an indicator function. 
\begin{align}
		&LSR_k=\lambda_k+k|\log(1-\rho)| \nonumber \\
		&+\log\left(LSR_0+\sum_{i=1}^k(1-\rho)^{i-1}\mathcal{L}_{d,i}\exp(-\lambda_i)\mathbbm{1}_{\left\{LSR_i < Th^S \right\}} \right. \nonumber \\
		& \left. +\sum_{i=1}^k(1-\rho)^{i-1}\mathcal{L}_{b,i}\exp(-\lambda_i)\mathbbm{1}_{\left\{LSR_i \ge Th^S \right\}}\right),
\label{eqn:LSRn_apdx}
\end{align}
where
\begin{equation}
	\lambda_k=\sum_{i=1}^k\log\left(\mathcal{L}_{a,i} \right)\mathbbm{1}_{\left\{LSR_i \ge Th^S \right\}}+\sum_{i=1}^k\log\left(\mathcal{L}_{c,i} \right)\mathbbm{1}_{\left\{LSR_i < Th^S \right\}}. \label{eqn:lambda_n_apdx} 
\end{equation}
The threshold $Th^S$ on $LSR_k$ is the same as the threshold $Th^s$ on $p_k$. $Th^S$ is derived directly from (\ref{eqn:pk_1st}) as given in (\ref{eqn:ThS}). We rewrite $\lambda_k$ by adding and subtracting $\sum_{i=1}^k\log\left(\mathcal{L}_{a,i} \right)\mathbbm{1}_{\left\{LSR_i < Th^S \right\}}$ to the right hand side of (\ref{eqn:lambda_n_apdx}) as follows,
\begin{align}
\lambda_k&=Z_k+\sum_{i=1}^k\log\left(\mathcal{L}_{c,i} \right)\mathbbm{1}_{\left\{LSR_i < Th^S \right\}} \nonumber \\ 
&-\sum_{i=1}^k\log\left(\mathcal{L}_{a,i} \right)\mathbbm{1}_{\left\{LSR_i < Th^S \right\}},
\label{eqn:lambda_n_2_apdx}
\end{align}
where $Z_k$ is given in (\ref{eqn:Zn}). Replacing the first $\lambda_k$ in (\ref{eqn:LSRn_apdx}) by (\ref{eqn:lambda_n_2_apdx}) and dividing the terms in $S_k$ and $l_k$, we get (\ref{eqn:LSRn_2nd}). The proof that $l_k$ is slowly changing variable is provided as follows. \newline

The variable $l_k$ will be called slowly changing provided the following two conditions are satisfied, according to \cite{siegmund2013sequential}:
\begin{equation}
	\text{C1: } k^{-1}\max\left\{\mid l_1\mid, \cdots,\mid l_k\mid\right\}\rightarrow0 \text{, } k\rightarrow \infty
	\label{eqn:C1_apdx}
\end{equation}
and for every $\epsilon >0$, there exists $k^*$ and $\delta>0$, such that for all $k \ge k^*$
 \begin{equation}
 	\text{C2: } \text{P}\left\{\max_{1 \le i\le k\delta}\mid l_{k+i}-l_k\mid>\epsilon\right\}<\epsilon.
 	\label{eqn:C2_apdx}
 \end{equation}
$l_k$ from (\ref{eqn:ln_2nd}) is represented as the summation of three terms as follows,
\begin{align}
	&l_k = l_{1,k}+l_{2,k} - l_{3,k}, \label{eqn:ln_apdx} \\ 
	& \text{where} \nonumber \\
	& l_{1,k}= \log\left(LSR_0+\sum_{i=1}^k(1-\rho)^{i-1}\mathcal{L}_{d,i}\exp(-\lambda_i)\mathbbm{1}_{\left\{LSR_i < Th^S \right\}} \right. \nonumber \\
	& \left. +\sum_{i=1}^k(1-\rho)^{i-1}\mathcal{L}_{b,i}\exp(-\lambda_i)\mathbbm{1}_{\left\{LSR_i \ge Th^S \right\}}\right) \label{eqn:l1_apdx},  \\
	& l_{2,k}=\sum_{i=1}^k\log\left(\mathcal{L}_{c,i}\right)\mathbbm{1}_{\left\{LSR_i < Th^S \right\}}, \text{ and} \label{eqn:l2_apdx}  \\
	& l_{3,k}=\sum_{i=1}^k\log\left(\mathcal{L}_{a,i}\right)\mathbbm{1}_{\left\{LSR_i < Th^S \right\}}. \label{eqn:l3_apdx} 
\end{align}
Taking absolute values on both sides of (\ref{eqn:ln_apdx}), we can write, 
\begin{equation}
	\mid l_k\mid \le \mid l_{1,k}\mid+\mid l_{2,k}\mid +\mid l_{3,k}\mid. 
	\label{eqn:ln_abs_apdx}
\end{equation}
After the attack start point, $LSR_k$ will gradually increase on average (from condition $C6$ in Theorem~\ref{th:ADD_FAR}), and it will first cross $Th^S$ and then $Th^D$ as $k \rightarrow \infty$. $LSR_k$ will remain below $Th^S$ for a relatively short period of time compared to the time it takes to cross $Th^D$, since $Th^D \rightarrow \infty$. Therefore, $l_{2,k}$ and $l_{3,k}$ will converge to some finite values, say $L_2$ and $L_3$, respectively, as $k \rightarrow \infty$. Also, $\exp\left(-\lambda_k \right) \rightarrow 0$ since $\lambda_k \rightarrow \infty$ as $k\rightarrow \infty$ from condition $C6$. Therefore, $l_{1,k}$ will also converge to a finite value, say $L_1$, as $k \rightarrow \infty$. Now, from (\ref{eqn:ln_abs_apdx}), we can say $l_k$ will also converge to a finite value $l$, \ie, $l \le L1+L2+L3$ as $k \rightarrow \infty$, which means $l_k$ will satisfy condition C1.

We assume that at $k = k_1$, $LSR_k$ crosses $Th^S$. Therefore, for $k \ge k_1$, we can write
\begin{align}
&l_{2,k+i} = l_{2,k} = \sum_{i=1}^{k_1}\log\left(\mathcal{L}_{c,i}\right)\mathbbm{1}_{\left\{LSR_i < Th^S \right\}}, \\
&l_{3,k+i} = l_{3,k} = \sum_{i=1}^{k_1}\log\left(\mathcal{L}_{a,i}\right)\mathbbm{1}_{\left\{LSR_i < Th^S \right\}}, \\
& l_{1,k+i}= \log\left(LSR_0+\sum_{i=1}^{k_1}(1-\rho)^{i-1}\mathcal{L}_{d,i}\exp(-\lambda_i)\times \right. \nonumber \\
& \left.\mathbbm{1}_{\left\{LSR_i < Th^S \right\}} +\sum_{j=1}^{k+i}(1-\rho)^{j-1}\mathcal{L}_{b,j}\exp(-\lambda_j)\mathbbm{1}_{\left\{LSR_j \ge Th^S \right\}}\right), \label{eqn:l1_apdx_2} \\
& l_{1,k}= \log\left(LSR_0+\sum_{i=1}^{k_1}(1-\rho)^{i-1}\mathcal{L}_{d,i}\exp(-\lambda_i) \right. \nonumber \\
& \left. \mathbbm{1}_{\left\{LSR_i < Th^S \right\}}+\sum_{i=1}^{k}(1-\rho)^{i-1}\mathcal{L}_{b,i}\exp(-\lambda_i)\mathbbm{1}_{\left\{LSR_i \ge Th^S \right\}}\right) \label{eqn:l1_apdx_3}.\\
&\text{Therefore, }l_{k+i}- l_k =  l_{1,k+i} - l_{1,k} \text{ for } k \ge k_1.  \label{eqn:diff_ln_apdx}
\end{align}
As mentioned before, $\exp\left(-\lambda_k \right) \rightarrow 0$ as $k \rightarrow \infty$, therefore, we can say $\text{P}\left\{l_{1,k+i} - l_{1,k}\right\} \rightarrow 0$ for a sufficiently large $k$, say $k^*$, and $k^* \ge k_1$. From (\ref{eqn:diff_ln_apdx}), for $k \ge k^*$, $\text{P}\left\{\left | l_{k+i} - l_{k}\right|> \epsilon\right\} = 0$, which in turn will satisfy condition C2. 
	
\section{Proof of Theorem~\ref{th:ADD_FAR}} \label{apdx:ADD_FAR}
First, we will show that the conditions C1-C4 are satisfied for the problem under study. $Z_k$ is a function of continuous random variables, which take uncountably infinite values, so $Z_k$ is non-arithmetic, thus satisfies the condition C1. 
		
For condition C2, $Z_1$ denotes the log-likelihood ratio (\ref{eqn:Zn}) just after the attack start point. For simplicity, we consider that the attacker is present in the system from the beginning. Now, from (\ref{eqn:Zn}) and (\ref{eqn:Laj}), we can write $Z_1$ as 
		\begin{align}
			Z_1 =& -\frac{1}{2}\log \frac{\left |\Sigma_{1,1} \right|}{\left |\Sigma_{0} \right|}-\frac{1}{2}\left(\widetilde \gamma_1 - {\bf \mu}_{1,1} \right)^T\Sigma_{1,1}^{-1}\left(\widetilde \gamma_1 - {\bf \mu}_{1,1} \right) \nonumber \\ 
			&+\frac{1}{2}{\widetilde \gamma_1}^T\Sigma_{0}^{-1}{\widetilde \gamma_1}
			\label{eqn:Z1}
		\end{align}
From (\ref{eqn:mu_1j}), (\ref{eqn:sigma_1j}), and (\ref{eqn:sigma_0j}), we can say that all the elements of (\ref{eqn:Z1}) are either finite or having Gaussian distributions with finite means and variances, which in turn ensures that $\text{E}_1\left[\mid Z_1 \mid ^2 \right]$ is finite. 
	
Condition C3 is proven in Appendix~\ref{apdx:LSRn}. From the expressions of ${\bf \Sigma}_0$ (\ref{eqn:sigma_0j}) and ${\bf \Sigma}_{\widetilde \gamma}$ (\ref{eqn:Exz1_original}), we can say that under the practical assumptions of the plant model and attacker's system parameters $0 < \text{E}_1\left[\text{D}\left(\mathpzc{f}^e_{1},\mathpzc{f}_{0}\right) \right] <  \infty$ from (\ref{eqn:opt_kld}). In a similar way we can also show that $0< \text{E}_1\left[\text{D}\left(\mathpzc{f}_{0},\mathpzc{f}^e_{1}\right)\right]	< \infty$. Therefore, the condition C4 is valid for the problem under study. The following is the proof of Theorem~\ref{th:ADD_FAR}.

Say, after the attack start point, at $k=\Gamma$ the test statistics $LSR_k$ will cross the threshold $Th^D$ at $k=\tau$ for the first time which is equivalent to the test statistics $p_k$ crossing the threshold $Th^d$. To derive the expression of ADD, we assume, $T_D = \tau - \Gamma$. After adding and subtracting $Th^D$ to (\ref{eqn:LSRn_2nd}) and rearranging the terms, it will take the following form at $k=T_D$,   
	\begin{equation}
	S_{T_D}  = Th^D + \left(LSR_{T_D} - Th^D \right) - l_{T_D} .
	\label{eqn:LSR_TD_apdx}
\end{equation}
According to the nonlinear renewal theory \cite{siegmund2013sequential}, the overshoot statistics of $LSR_{T_D} - Th^D$ can be approximated by the overshoot statistics of $S_{T_D}$, \ie, $r_{n_d} = S_{T_D} - Th^D$, provided $Th^D \rightarrow \infty$. Moreover, the slowly changing term $l_k \rightarrow l$ as $k \rightarrow \infty$, where $l$ is a RV \cite{Tartakovsky2005}. Taking expectations on both sides of (\ref{eqn:LSR_TD_apdx}), we get 
\begin{equation}
\text{E}_{1} \left [S_{T_D}\right] = Th^D + \bar r - \bar l + o(1),
\label{eqn:S_TD_apdx}
\end{equation}
where ${\bar r}= \lim_{n_d \rightarrow \infty} \text{E}_1\left[r_{n_d} \right]$ and ${\bar l}= \lim_{k \rightarrow \infty }\text{E}_1\left[l_k \right]$. 
The following expression of $\text{E}_{1} \left [S_{T_D}\right]$ is derived by taking expectations on both sides of (\ref{eqn:Sn_2nd}) \cite{Tartakovsky2005}, 
\begin{equation}
\text{E}_{1} \left [S_{T_D}\right] = \text{E}_1 \left[T_D\right] \left( \text{E}_1\left[Z_1 \right] + |\log(1-\rho)| \right).
\label{eqn:E_S_TD_apdx}
\end{equation}
Furthermore, $\text{E}_1\left[Z_1 \right]$ can be approximated as $\text{E}_1\left[\text{D}\left(\mathpzc{f}^e_{1},\mathpzc{f}_{0}\right) \right]$ (\ref{eqn:opt_kld}) as explained in \cite{watermarking_tac}. Combining (\ref{eqn:S_TD_apdx}) and (\ref{eqn:E_S_TD_apdx}), and rearranging the term we get (\ref{eqn:ADD_2nd}), where $ADD = \text{E}_1 \left[T_D\right]$. 

A brief derivation of FAR is provided as follows. A detailed one can be found in \cite{Tartakovsky2005}.
\begin{equation}
	\begin{aligned}
	FAR &= \text{E}^{\pi}\left[1-p_{\tau} \right] \\
	& = \text{E}^{\pi}\left[\frac{1}{1+\rho \exp\left(LSR_{\tau} \right)} \right]	\text{ [using (\ref{eqn:pk_1st})]} \\
	&=\text{E}^{\pi}\left[\frac{1}{\exp\left(LSR_{\tau}\right)}\frac{1}{\rho+\exp\left(-LSR_{\tau} \right)} \right]
	\end{aligned}
\label{eqn:FAR_1_apdx}
\end{equation}
False alarm will occur when $LSR_{k}$ crosses $Th^D$ during the normal system operation. Therefore, $\exp\left(-LSR_{\tau} \right) \le \exp\left(-Th^D\right) \rightarrow 0$ as $Th^D \rightarrow \infty$. So, (\ref{eqn:FAR_1_apdx}) can be approximated as 
\begin{equation}
	\begin{aligned}
		FAR &= \frac{1}{\rho}\text{E}^{\pi}\left[\frac{1}{\exp\left(LSR_{\tau}\right)}\right](1+ o(1)), \text{ as }Th^D \rightarrow \infty \\
		&=\frac{1}{\rho}\exp\left(-Th^D \right)\text{E}^{\pi}\left[ \exp\left(Th^D - LSR_{\tau}\right)\right](1+ o(1))
	\end{aligned}
\label{eqn:FAR_2_apdx}
\end{equation}
$\text{E}^{\pi}\left[ \exp\left(Th^D - LSR_{\tau}\right)\right]$ can be approximated by $\xi$ using the overshoot $r_{n_d}$ statistics \cite{siegmund2013sequential} as given in (\ref{eqn:Xi}). Replacing $\text{E}^{\pi}\left[ \exp\left(Th^D - LSR_{\tau}\right)\right]$ by $\xi$ in (\ref{eqn:FAR_2_apdx}), we get (\ref{eqn:FAR}).
\section{proof of Theorem~\ref{th:DeltaLQG}} \label{apdx:DeltaLQG}
The proposed parsimonious watermarking mechanism can be assumed to be a always present watermarking scheme, where the watermarking signal is ${\bf e}_{s,k} = s_{k-1}{\bf e}_k$. $s_{k-1}$ and ${\bf e}_k$ are assumed to be uncorrelated since they are generated from two independent processes. The variance of ${\bf e}_{s,k}$, ${\bf \Sigma}_{e_s}$, is derived as 
\begin{equation}
		{\bf \Sigma}_{e_s}=\text{E}_0\left[s^2_{k-1}{\bf e}_k{\bf e}^T_k\right]=\text{E}_0\left[s^2_{k-1}\right]{\bf \Sigma}_e, 
		\label{eqn:sigma_es_apdx}
\end{equation}
where ${\bf \Sigma}_e = \text{E}_0\left[{\bf e}_k{\bf e}^T_k\right]$. Since $s_k = 1 \text{ or } 0$, $\text{E}_0\left[s^2_{k-1}\right] = \text{E}_0\left[s_{k}\right]$. Therefore, ${\bf \Sigma}_{e_s}$ takes the following form,
\begin{equation}
	\begin{aligned}
		{\bf \Sigma}_{e_s} &= \text{E}_0\left[s_{k}\right] {\bf \Sigma}_e = \frac{ANW}{\text{E}\left[\Gamma \right]}{\bf \Sigma}_e  \\
		&=\rho ANW {\bf \Sigma}_e \text{ [since } \Gamma \sim Geom\left(\rho\right) \text{]}.
	\end{aligned}
	\label{eqn:sigma_es_2_apdx}
\end{equation}
Now, the increase in the control cost, $\Delta LQG$, is derived using Theorem 3 from \cite{watermarking_tac} for the always present watermarking signal ${\bf e}_{s,k}$ as follows,
\begin{equation}
	\begin{aligned}
		\Delta LQG &= \text{tr} \left({\bf H} {\bf \Sigma}_{e_s}\right) \\
		&=\rho ANW \text{tr} \left({\bf H} {\bf \Sigma}_{e}\right)  \text{, [applying (\ref{eqn:sigma_es_2_apdx})]},
	\end{aligned}
\end{equation}
where $\bf H$ is given in (\ref{eqn:H}).
\section{System Parameters}
\label{apdx:system_params} The following system parameters are used for simulation study.  $\rho = 0.001$.\newline
\textbf{System-A parameters}:
\begin{align*}
{\bf A} &=\begin{bmatrix}0.75 & 0.2 \\0.2 & 1.0 \end{bmatrix}           &  {\bf B} &=\begin{bmatrix}0.9 & 0.5 \\0.1 & 1.2 \end{bmatrix}              &  {\bf C}&=\begin{bmatrix}1.0 & -1.0  \end{bmatrix} \\
{\bf Q} &=diag\begin{bmatrix}1 & 1  \end{bmatrix}           &  {\bf R} &=1              &  {\bf W}&=diag\begin{bmatrix}1 & 2  \end{bmatrix} \\
{\bf U} &=diag\begin{bmatrix}0.4 & 0.7  \end{bmatrix}           &  {\bf A}_a &=0.5             &  {\bf Q}_a &=7.5 
\end{align*}
\textbf{System-B parameters}:
\begin{align*}
{\bf A} =\begin{bmatrix}0.968 &0&0.082 &0 \\ 0&0.978&0&0.064 \\ 0&0&0.917&0 \\ 0&0&0&0.935 \end{bmatrix} &  {\bf B} &=\begin{bmatrix} 0.164&0.004 \\0.002&0.124 \\ 0&0.092 \\ 0.060&0  \end{bmatrix} 
\end{align*}
\begin{align*}
{\bf C} &= \begin{bmatrix} 5 &0 &0 &0 \\  0 &5 &0 &0  \end{bmatrix} &  {\bf R} &=diag\begin{bmatrix}0.5 & 0.5  \end{bmatrix} \\
 {\bf Q} &=diag\begin{bmatrix}0.25 & 0.25 & 0.25 & 0.25  \end{bmatrix}     &  {\bf U} &=diag\begin{bmatrix}2 & 2  \end{bmatrix}      \\
 {\bf W} &=diag\begin{bmatrix}5 & 5 & 1 & 1  \end{bmatrix}        &  {\bf Q}_a &=diag\begin{bmatrix}6 & 6  \end{bmatrix}     \\
 {\bf A}_a &=diag\begin{bmatrix}0.4 & 0.1 & 0.1 & 0.7  \end{bmatrix}           
\end{align*}

\bibliographystyle{IEEEtran}
\bibliography{IEEEabrv,bibfile}

\end{document}